\definecolor{dkgreen}{rgb}{0,0.6,0}
\definecolor{gray}{rgb}{0.5,0.5,0.5}
\definecolor{mauve}{rgb}{0.58,0,0.82}
\tiny\color{gray},
\newtheorem{theorem}{Theorem}[section]
\newtheorem{proposition}[theorem]{Proposition}
\newtheorem{lemma}[theorem]{Lemma}
\newtheorem{corollary}[theorem]{Corollary}
\theoremstyle{definition}
\newtheorem{example}[theorem]{Example}
\newtheorem{definition}[theorem]{Definition}
\newtheorem{remark}[theorem]{Remark}
\newtheorem{construction}[theorem]{Construction}
\newcommand{\pg}{{\rm PG}}
\newcommand{\pgl}{{\rm PGL}}
\newcommand{\pgammal}{{\rm P}\Gamma{\rm L}}
\newcommand{\spec}{\operatorname{Spec}}
\newcommand{\Tr}{\operatorname{Tr}}
\newcommand{\F}{\mathbb{F}}
\title{The Intersection Distribution: New Results and Perspectives}
\author[1]{Sophie Huczynska}
\author[2] {Lukas Klawuhn}
\author[3] {Maura B. Paterson}
\affil[1]{School of Mathematics and Statistics, University of St Andrews, St Andrews, KY16 9SS, Scotland, UK}
\affil[2]{Department of Mathematics, Paderborn University, Warburger Str.\ 100, 33098 Paderborn, Germany}
\affil[3]{School of Computing and Mathematical Sciences, Birkbeck, University of London, Malet St, WC1E 7HX, UK}
\date{}
\begin{document}
\maketitle
\begin{abstract}
Intersection distribution and non-hitting index are concepts introduced recently by Li and Pott as a new way to view the behaviour of a collection of finite field polynomials.  With both an algebraic interpretation via the intersection of a polynomial with a set of lines, and a geometric interpretation via a $(q+1)$-set  possessing an internal nucleus, the concepts have proved their usefulness as a new way to view various long-standing problems, and have applications in areas such as Kakeya sets. In this paper, by exploiting connections with diverse areas including the theory of algebraic curves, cyclotomy and the enumeration of irreducible polynomials, we establish new results and resolve various Open Problems of Li and Pott.  We prove geometric results which shed new light on the relationship between intersection distribution and projective equivalence of polynomials, and algebraic results which describe and characterise the degree of $S_f$ - the index of the largest non-zero entry in the intersection distribution of  $f$.  We provide new insights into the non-hitting spectrum, and show the limitations of the non-hitting index as a tool for characterisation. Finally, the benefits provided by the connections to other areas are evidenced in two short new proofs of the cubic case.
\end{abstract}

\section{Introduction}\label{sec:introduction}
Polynomials over finite fields are studied in a range of contexts and have many applications, both mathematical and practical \cite{ffhandbook}. In \cite{LiPot}, Li and Pott introduce new tools for studying the behaviour of polynomials over finite fields, the {\em intersection distribution} and the {\em non-hitting index}. With two different interpretations (algebraic and geometric), this framework offers the opportunity to tap-into multiple different proof techniques to tackle problems which lie within its scope. It is directly connected to long-standing areas of interest, such as attempts to construct and classify o-polynomials, and results about the intersection distribution have been applied in the context of constructing Kakeya sets and Steiner triple systems \cite{KyuLiPot,LiPot}.

Let $q$ be a prime power, and let $f\in\F_q[x]$ be a polynomial over $\F_q$. We are interested in analysing the polynomial $f$ through intersections of its graph $\{(x,f(x)): x \in \F_q\}$ with lines in the classical affine plane; the intersection sizes provide useful information about $f$. In the affine setting, we will call lines of the form $x=c$ ``vertical" lines, and lines of the form $y=c$ ``horizontal" lines.  Since each vertical line intersects the graph of $f$ in precisely one point, these are ignored in the following definition, which considers only intersections of the graph with the $q^2$ non-vertical lines.  We count intersection points purely combinatorially without invoking any notion of algebraic multiplicity. 

\begin{definition}[\cite{LiPot}] \label{def:affine}
Let $f\in\F_q[x]$.  For $0\leq i \leq q$, let $v_i(f)$ denote the number of $(a,b) \in \F_q^2$ such that the equation $f(x)-ax-b=0$ has precisely $i$ solutions in $\F_q$.  The sequence $(v_i(f))_{i=0}^q$ is the {\em intersection distribution} of $f$.  The value $v_0(f)$ is the {\em non-hitting index} of $f$.
\end{definition}
Equivalently, $v_i(f)$ is the number of pairs $(a,b) \in \F_q^2$ such that the equation $f(x) = ax+b$ has precisely $i$ solutions in $\F_q$, i.e. the number of non-vertical lines which intersect the graph of $f$ in precisely $i$ points.  

\begin{example}[\cite{LiPot}]
Let $f=ax+b$ with $a,b\in\F_q$.  
We have $v_q(f)=1$ since the line $y=ax+b$ contains precisely the $q$ points of the graph of $f$. No other line contains more than one point of the graph, so $v_i(f)=0$ for $i=2,3,\dotsc,q-1$. We have $v_0(f)=q-1$, since the $q-1$ lines of the form $y=ax+c$ with $c\neq b$ do not intersect the graph of $f$. Finally, each remaining line intersects the graph of $f$ in precisely one point, hence $v_1(f)=q(q-1)$ (not counting vertical lines).  
\end{example}

We may also express these concepts in terms of projective geometry.  We associate $f$ with a subset $S_f$ of $\pg(2,q)$, the projective plane over $\F_q$, by setting
\begin{align*}
    S_f=\{(x,f(x),1):x\in\F_q\}\cup\{(0,1,0)\}.
\end{align*}
The set $S_f$ consists of $q+1$ points.  The point $(0,1,0)$ is the ``point at infinity'' that lies on all ``vertical'' lines of the form $ax+bz=0$ with $a,b\in\F_q$.  Note that if $f=\sum_{i=0}^n a_ix^i$ with $a_i\in \F_q$ and $a_n\neq 0$ with $n\geq 2$, then $(0,1,0)$ satisfies the homogeneous polynomial equation $yz^{n-1}=\sum_{i=0}^na_ix^iz^{n-i}$; i.e. the points of $S_f$ are precisely those points of $\pg(2,q)$ that satisfy this equation.  

For $0\leq i \leq q$, $v_i(f)$ is the number of lines of $\pg(2,q)$ of the form $y+ax+bz=0$ with $a,b\in\F_q$ that intersect $S_f$ in $i$ distinct points of $\pg(2,q)$.  We note that this definition ignores those lines of $\pg(2,q)$ that pass through $(0,1,0)$; these lines each intersect $S_f$ in two points, with the exception of the line at infinity $z=0$, which contains a single point of $f$. Hence, we do not lose information by ignoring these lines.

Connecting a polynomial $f$ with the set of points $S_f\subset\pg(2,q)$ allows geometric properties of $S_f$ to be brought to bear in the study of the behaviour of $f$.  To study such sets, the notion of intersection distribution can be extended to arbitrary sets $D$ of $q+1$ points in $\pg(2,q)$.  We first introduce some useful terminology.
\begin{definition}
Let $D\subset \pg(2,q)$.  A line of $\pg(2,q)$ that contains no points of $D$ is an {\em external line}. A line that intersects $D$ in a single point of $\pg(2,q)$ is a {\em unisecant}, a line that contains two distinct points of $D$ is a {\em bisecant}, and for $0\leq i \leq q+1$, a line that contains $i$ distinct points of $D$ is an {$i$-secant}.
\end{definition}

\begin{definition}[\cite{LiPot}]
Let $D\subset \pg(2,q)$ with $|D|=q+1$.  For $0\leq i \leq q+1$, let $u_i(D)$ denote the number of $i$-secants of $D$.   The sequence $(u_i(D))_{i=0}^{q+1}$ is the {\em intersection distribution} of $D$.  The value $u_0(D)$ is the {\em non-hitting index} of $D$.  The largest $i$ for which $u_i(D)$ is non-zero is the {\em degree} of $D$.
\end{definition}
There is a slight discrepancy between the intersection distribution of the set $S_f$ and the intersection distribution of the polynomial $f$, but conversion is straightforward:
\begin{theorem}[\cite{LiPot}]\label{thm:v_itou_i}
Let $f$ be a polynomial over $\F_q$ and let $S_f$ be the associated $(q+1)$-set in $\pg(2,q)$. Then
\begin{align*}
v_0(f)&=u_0(S_f),\\
v_1(f)&=u_1(S_f)-1,\\
v_2(f)&=u_2(S_f)-q,\\
v_i(f)&=u_i(S_f)\quad\quad (3 \leq i \leq q),\\
u_{q+1}(S_f)&=0.
\end{align*}
\end{theorem}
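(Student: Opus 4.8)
The plan is to exploit the single structural difference between the two distributions: $v_i(f)$ counts only the $q^2$ non-vertical lines---equivalently, the lines of $\pg(2,q)$ that do \emph{not} pass through the point at infinity $(0,1,0)$---whereas $u_i(S_f)$ counts \emph{all} $q^2+q+1$ lines of the plane. I would therefore partition the lines of $\pg(2,q)$ into the $q+1$ lines through $(0,1,0)$ and the $q^2$ lines avoiding it, and show that on the latter family the two counting functions agree, so that $u_i(S_f)$ differs from $v_i(f)$ only by the contribution of the lines through $(0,1,0)$.

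First I would establish the bijection between non-vertical lines and pairs $(a,b)$. A line not through $(0,1,0)$ has non-zero $y$-coefficient, so up to scaling it may be written uniquely as $y + ax + bz = 0$ with $a,b \in \F_q$. Restricting to the affine chart $z = 1$, the points of $S_f$ on this line are exactly the $(x,f(x),1)$ with $f(x) = -ax - b$; since $(a,b) \mapsto (-a,-b)$ is a bijection of $\F_q^2$, the number of non-vertical $i$-secants of $S_f$ equals the number of pairs $(a,b)$ for which $f(x) = ax+b$ has exactly $i$ solutions, namely $v_i(f)$.

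Next I would classify the $q+1$ lines through $(0,1,0)$, which are precisely those of the form $\alpha x + \gamma z = 0$. The line at infinity $z = 0$ contains $(0,1,0)$ but none of the affine points $(x,f(x),1)$, so it is a unisecant. Each remaining such line is a vertical line $x - cz = 0$ with $c \in \F_q$; it contains $(0,1,0)$ together with the single affine point $(c,f(c),1)$ (the $x$-coordinate pins down a unique affine point), and hence is a bisecant. Thus among the $q+1$ lines through $(0,1,0)$ there is exactly one $1$-secant and exactly $q$ $2$-secants, and no lines of any other secant type.

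Combining the two families gives $u_i(S_f) = v_i(f) + (\text{number of vertical } i\text{-secants})$ for each $i$, which yields the four displayed identities: the correction is $0$ for $i = 0$, $+1$ for $i = 1$, $+q$ for $i = 2$, and $0$ for $3 \le i \le q$. Finally, to see $u_{q+1}(S_f) = 0$ I would argue that no line can meet all $q+1$ points of $S_f$: a non-vertical line misses $(0,1,0)$ and so meets $S_f$ in at most its $q$ affine points, while any line through $(0,1,0)$ meets $S_f$ in at most two points by the classification above; since $q+1 \ge 3$, neither can be a $(q+1)$-secant. I do not anticipate a genuine obstacle here; the only real care is in the sign convention of the bijection and in correctly identifying which lines through the point at infinity are uni- versus bisecants.
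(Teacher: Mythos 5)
Your proof is correct and takes essentially the same route as the paper: the discussion immediately preceding the theorem makes exactly your observation, namely that the $q+1$ lines through $(0,1,0)$ consist of the single unisecant $z=0$ and $q$ bisecants (the vertical lines $x-cz=0$), while the remaining $q^2$ lines $y+ax+bz=0$ are in bijection with the pairs $(a,b)$ counted by $v_i(f)$. Your additional care over the sign convention $(a,b)\mapsto(-a,-b)$ and the explicit check that no line can be a $(q+1)$-secant are sound and complete the argument.
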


In \cite{LiPot}, Li and Pott establish the correspondence between the algebraic and geometric viewpoints, and determine the intersection distributions of various polynomials (chiefly monomials), mainly via calculation of the ``multiplicity distribution''. They give upper and lower bounds for the largest and smallest non-hitting index for $(q+1)$-sets in $\pg(2,q)$, and provide a characterisation/partial characterisation for sets with non-hitting index near the lower/upper bound respectively.  They introduce the notion of projective equivalence of polynomials, with intersection distribution being an invariant under the equivalence.  

In \cite{KyuLiPot}, Kyureghyan, Li and Pott describe the intersection distributions for cubic polynomials;  Li and Xiong do likewise for quartic polynomials in \cite{LiXiong}.  In \cite{KyuLiPot}, the multiplicity distribution is again the key tool, whereas in \cite{LiXiong}, a direct analysis is performed of the pattern of roots of quartic polynomials.  The question of which polynomials are projectively equivalent to $x^3$ and $x^4$ respectively is posed, with some conjectures made. In \cite{LiLiQu},  Li, Li and Qu resolve Conjecture 3.2(1) of \cite{KyuLiPot}, while in \cite{DinZie}, Ding and Zieve provide an alternative short proof of the second half of this conjecture.  

All of \cite{KyuLiPot, LiPot, LiXiong} contain applications of these concepts to other combinatorial problems: to Kakeya sets in \cite{KyuLiPot} and \cite{LiPot}, and to Steiner triple systems in \cite{KyuLiPot} and \cite{LiXiong}. \.{I}rima\u{g}z{\i}
and \"{O}zbudak present applications to Costas arrays and optical orthogonal codes in \cite{IriOzb}.

In this paper, we develop the ideas of intersection distribution and non-hitting index of Li and Pott in new directions.  On the geometric side, in Section \ref{sec:projequiv} we establish new results concerning the relationship between the intersection distribution and the projective equivalence of polynomials.  We highlight the role of the nuclei of the set, and provide a characterisation of polynomials $f$ for which $S_f$ possesses more than one internal nucleus, in terms of permutation polynomials.  In Section \ref{sec:algebraic}, we consider the degree of $S_f$ (the index of the largest non-zero entry in the intersection distribution of  $f$), focusing on the case when $f$ is monomial. We establish new results characterising/partially characterising the intersection of $S_f$ with classes of affine lines, allowing us to bound or precisely evaluate the degree in many cases.  In Section \ref{sec:specificvalues}, we provide a geometric and algebraic characterisation of several natural families of constructions and their intersection distributions; some examples from \cite{LiPot} are special cases of these. We extend what is known about the non-hitting spectrum, determining its next smallest possible values and showing that sets attaining these are not characterised by their non-hitting index. 
We resolve some of the Open Problems posed by Li and Pott in \cite{LiPot}, and present two short direct treatments of the intersection distribution of cubic polyomials, first established in \cite{KyuLiPot}. We conclude with some new Open Problems.

\subsection{Geometric background}\label{subsec:background}
Further detail on the concepts mentioned here, as well as proofs of the stated results, can be found in \cite{lracasse} or \cite{Hirschfeld}.
The {\em points} of $\pg(2,q)$ are represented using {\em homogeneous coordinates:} triples $(x,y,z)$ with $x,y,z\in\F_q$ not all zero, under the understanding that for any non-zero $\lambda\in \F_q$, $(x,y,z)$ and $(\lambda x, \lambda y, \lambda z)$ represent the same point.  {\em Lines} are sets of points satisfying equations of the form $ax+by+cz=0$ for $a,b,c\in\F_q$ not all zero.  A set of points is {\em collinear} if they all lie on a common line.  

A {\em collineation} $\pi$ of $\pg(2,q)$ is a bijection from the set of points of $\pg(2,q)$ to itself that preserves incidence, i.e. if $S$ is a set of collinear points, then so is $S^\pi$.  The group of collineations of $\pg(2,q)$ is denoted by $\pgammal(3,q)$.  Two sets of points $S$ and $S^\prime$ are said to be {\em projectively equivalent} if $S^\prime=S^\pi$ for some $\pi \in \pgammal(3,q)$.

An important class of collineations are the {\em homographies:} maps that arise by treating points as column vectors and multiplying them on the left by an invertible $3\times 3$ matrix over $\F_q$.  The group of homographies of $\pg(2,q)$ is denoted by $\pgl(3,q)$.  Given any ordered {\em quadrangle} (set of four points, no three of which are collinear), there is a unique homography that maps those points onto the points of the {\em fundamental quadrangle} $\{(1,0,0),(0,1,0),(0,0,1),(1,1,1)\}$.

Another class of collineations are {\em automorphic collineations:} these are maps that send a point $(x,y,z)$ to $(x^\sigma,y^\sigma,z^\sigma)$ where 
$\sigma$ is an automorphism of $\F_q$.  Automorphic collineations fix all the points of the fundamental quadrangle.  The {\em Fundamental Theorem of Field Planes} states that every collineation can be expressed as a product of a homography and an automorphic collineation.

We now introduce some terminology that will be used throughout this section.
\begin{definition}
Let $D\subset \pg(2,q)$ be a $(q+1)$-set.  A point $N\in \pg(2,q)\setminus{D}$ with the property that every line through $N$ is a unisecant of $D$ is called an (external) {\em nucleus} of $D$.  A point $I\in D$ that lies on one unisecant and $q$ bisecants of $D$ is an {\em internal nucleus} of $D$.
\end{definition}
Li and Pott observe that if $D$ is the set of points of a line of $\pg(2,q)$, then $D$ does not have an internal nucleus \cite{LiPot}.  However, the situation is different for sets of the form $S_f$ for some $f\in\F_q[x]$:
\begin{theorem}[\cite{LiPot}]\label{thm:sfinternalnucleus}
 Let $f\in\F_q[x]$.  Then the point $(0,1,0)$ is an internal nucleus of $S_f$.
Conversely, for any $(q+1)$-set $D\subset\pg(2,q)$ that has an internal nucleus, there exists a polynomial $f\in\F_q[x]$ of degree at most $q-1$ for which $D$ is projectively equivalent to $S_f$.
\end{theorem}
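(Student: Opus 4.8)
This statement splits into two independent halves; I would establish each directly from the definitions, with the converse carrying the real content.

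For the forward direction, I would enumerate and classify the $q+1$ lines through $(0,1,0)$. A line $ax+by+cz=0$ contains $(0,1,0)$ exactly when $b=0$, so these lines are $z=0$ together with the ``vertical'' lines $x+cz=0$ for $c\in\F_q$. The line $z=0$ meets $S_f$ only in $(0,1,0)$, as every affine point of $S_f$ has $z=1$, so it is the unique unisecant through the point. Each line $x+cz=0$ restricts the affine points to $x=-c$, so it meets $S_f$ in exactly $(0,1,0)$ and $(-c,f(-c),1)$ and is therefore a bisecant. Hence $(0,1,0)$ lies on one unisecant and $q$ bisecants, which is precisely the internal-nucleus condition.

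For the converse, the plan is to move $D$ into standard position by a homography. Let $I$ be the internal nucleus of $D$ and $\ell$ the unique unisecant through it. I would choose $\pi\in\pgl(3,q)$ with $I^\pi=(0,1,0)$ and $\ell^\pi$ equal to the line $z=0$ (the existence of such a $\pi$ is addressed below), and set $D'=D^\pi$. Since $\pi$ is a collineation, $(0,1,0)$ is again an internal nucleus of $D'$ and its unique unisecant is $z=0$; in particular the remaining $q$ points of $D'$ all avoid $z=0$ and so normalise to affine points $(x_i,y_i,1)$.

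The crux is that the coordinates $x_i$ are pairwise distinct. The lines through $(0,1,0)$ other than $z=0$ are exactly the vertical lines $x+cz=0$, along whose affine points the $x$-coordinate is constant; if two points of $D'$ shared an $x$-coordinate, the vertical line through them would pass through $(0,1,0)$ and contain two further points of $D'$, contradicting the internal-nucleus property. Thus the $x_i$ form a set of $q$ distinct elements, namely all of $\F_q$, and $x_i\mapsto y_i$ defines a function $\F_q\to\F_q$. By Lagrange interpolation this function is given by a unique $f\in\F_q[x]$ of degree at most $q-1$, whence $D'=S_f$ and $D$ is projectively equivalent to $S_f$, as required. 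The only step needing care is the existence of the normalising homography $\pi$; this is routine given the quadrangle fact recalled earlier, since one may complete $I$ and a second point of $\ell$ to a quadrangle and map it, in a suitable order, to the fundamental quadrangle so that $I\mapsto(0,1,0)$ while the second point goes to $(1,0,0)$, forcing the line $\ell$ onto $z=0$.
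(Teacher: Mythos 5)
Your proof is correct and follows essentially the same route as the paper's: the forward direction classifies the $q+1$ lines through $(0,1,0)$ as the unisecant $z=0$ plus $q$ bisecants $x+cz=0$, and the converse normalises $D$ by a homography sending the internal nucleus to $(0,1,0)$ and its unisecant to $z=0$, then uses the no-trisecant property to get distinct $x$-coordinates and interpolates. The only (harmless) additions are your explicit quadrangle justification for the normalising homography and the remark that the $x_i$ exhaust $\F_q$, both of which the paper leaves implicit.
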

\begin{proof}
Any line of $\pg(2,q)$ through $(0,1,0)$ has an equation of the form $ax+bz=0$ for some $a,b \in \F_q$ not both zero.   Suppose $f\in\F_q[x]$.  The line   $z=0$ meets $S_f$ in the single point $(0,1,0)$.  A line $ax+bz=0$ with $a\neq 0$ meets $S_f$ in $(0,1,0)$ and the additional point $(-b/a,f(-b/a),1)$.  Hence, $(0,1,0)$ is an internal nucleus.

Now suppose $D$ is a $(q+1)$-set with an internal nucleus $I$.  There is a unique unisecant to $D$ that passes through $I$; let $P\neq I$ be any other point of this line.  By applying a suitable homography $\theta$ if necessary, we can let the coordinates of $I$ be $(0,1,0)$ and $P$ be $(1,0,0)$.  Then $D^\theta$ has $(0,1,0)$ as an internal nucleus and $z=0$ as the unisecant through this internal nucleus.

Let the remaining points of $D^\theta$ be $P_1,\dotsc,P_q$. By construction, these points have non-zero $z$-coordinate, so their coordinates can be written in the form  $(x_i,y_i,1)$ for $i\in\{1,\dotsc,q\}$.  As $(0,1,0)$ is an internal nucleus, we have $x_i\neq x_j$ for $i\neq j$.  Thus, by polynomial interpolation there is a unique polynomial $f\in\F_q[x]$ of degree at most $q-1$ with $y_i=f(x_i)$.
\end{proof}

One of the longest-studied structures in finite geometry is the {\em $k$-arc}, a set of points of $\pg(2,q)$ of which no three are collinear.  When $q$ is odd, the largest possible $k$-arc has size $q+1$ \cite{Bose}.  The intersection distribution for a $(q+1)$-arc can be completely determined by straightforward counting.
\begin{example}\label{ex:oddconic}
Let $\mathcal{K}$ be a $(q+1)$-arc in $\pg(2,q)$.  By definition, $u_i(\mathcal{K})=0$ for $i\geq 3$.  Consider a point $P\in \mathcal{K}$.  Any pair of points of $\mathcal{K}$ lie on a unique bisecant of $\mathcal{K}$, hence $u_2(\mathcal{K})=\binom{q+1}{2}=(q^2+q)/2$.  Every point $P$ of $\mathcal{K}$ is an internal nucleus: it lies on $q$ bisecants (one through each remaining point of $\mathcal{K}$) and the remaining line through $P$ is a unisecant. We thus have $u_1(\mathcal{K})=q+1$.  As $\pg(2,q)$ has $q^2+q+1$ lines in total, we conclude that
\[
    u_0(\mathcal{K})=q^2+q+1-\frac{q^2+q}{2}-q-1=\frac{q^2-q}{2}=\frac{q(q-1)}{2}.
\]
Thus, we have determined the intersection distribution (and non-hitting index) of $\mathcal{K}$.  \end{example}  

 A celebrated result of Segre shows that every $(q+1)$-arc consists of the points of a non-singular conic in $\pg(2,q)$ (a set of points that satisfy $F=0$ where $F\in \F_q[x,y,z]$ is an absolutely irreducible homogeneous quadratic polynomial \cite{Segre55}). An example
 of an irreducible conic is the set $S_f$ for any polynomial $f\in\F_q[x]$ of degree $2$.   It is known that for any $q$ there is a single orbit of non-singular conics under $\pgammal(3,q)$, i.e. any two $(q+1)$-arcs in $\pg(2,q)$ with $q$ odd are projectively equivalent. In particular, any $(q+1)$-arc in $\pg(2,q)$ with $q$ odd is projectively equivalent to $S_f$ where $f=x^2$ \cite{LiPot}.

When $q$ is odd, a $(q+1)$-arc in $\pg(2,q)$ does not have any external nuclei \cite{Hirschfeld}.  When $q$ is even, however, every $(q+1)$-arc $\mathcal{K}$ has a unique external nucleus $N$ \cite{Hirschfeld}.  It follows that $\mathcal{K}\cup{N}$ is a $(q+2)$-arc; it is known that this is the largest possible size of a $k$-arc in $\pg(2,q)$ with $q$ even.  A $(q+1)$-arc is frequently called an {\em oval}, and a $(q+2)$-arc is known as a {\em hyperoval.}\footnote{Caution: Hirschfeld \cite{Hirschfeld} uses the term {\em oval} to refer to the $(q+2)$-arc.} 

Let $I$ be a point of a $(q+1)$-arc $\mathcal{K}$.  Then $I$ is an internal nucleus.  By  Theorem~\ref{thm:sfinternalnucleus}, $\mathcal{K}$ is projectively equivalent to a $(q+1)$-arc that contains the points $(0,0,1)$ and $(1,1,1)$ and has internal nucleus $(0,1,0)$ and nucleus $(1,0,0)$; the remaining points of this arc have coordinates of the form $(x,f(x),1)$ for some $f\in\F_q[x]$ of degree $n$ with $2\leq n\leq q-1$. A polynomial arising from an oval in this manner is known as an {\em o-polynomial} \cite{segre57}.  Unlike in the case of odd $q$, for $q$ even it is possible to have projectively inequivalent ovals, and many constructions of ovals/o-polynomials are known.  The problem of classifying ovals up to projective equivalence is still open.   

A well-studied generalisation of $k$-arcs is the notion of a $(k;n)$-arc: a set of $k$ points of $\pg(2,q)$ that has at least one $n$-secant but no $i$-secant for $i\geq n$ (here we require $n\geq 2)$.  Typical questions studied are to identify the largest possible $(k;n)$-arcs that exist for particular choices of $n$ and $q$ (i.e. fixing $n$ and exploring the possibilities for $k$).  A $(q+1)$-set $D$ of degree $n$ is an example of a $(q+1;n)$-arc.  The research direction introduced by Li and Pott in \cite{LiPot} can be viewed as that of fixing $k$ to be $q+1$ and then exploring the possibilities for $n$ (as well as other properties of the associated intersection distribution).
The following well-established result, presented in Proposition~2.1 of \cite{LiPot} for the case $k=q+1$, shows that the values of the intersection distribution are not independent of each other.
\begin{theorem}[\cite{TalScaf}]\label{thm:uisums}
Let $D$ be a $(q+1)$-set in $\pg(2,q)$ of degree $n$. Then
\begin{itemize}
\item[(i)] $\sum_{i=0}^n u_i(D)=q^2+q+1$,
\item[(ii)] $\sum_{i=1}^n i u_i(D)=(q+1)^2$,
\item[(iii)] $\sum_{i=2}^n i(i-1) u_i(D)=q(q+1)$.
\end{itemize}
\end{theorem}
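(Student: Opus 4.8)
The plan is to prove all three identities by elementary double counting, relying only on the standard incidence properties of $\pg(2,q)$ together with the fact that $|D|=q+1$. The structural facts I need are that $\pg(2,q)$ has exactly $q^2+q+1$ lines, that every point lies on exactly $q+1$ lines, and that any two distinct points are joined by a unique line. The guiding idea is that each $u_i(D)$ records how many lines meet $D$ in exactly $i$ points, so the three quantities $\sum u_i$, $\sum i\,u_i$, and $\sum i(i-1)u_i$ are naturally the counts of lines, of incident point--line pairs with the point in $D$, and of ordered pairs of distinct points of $D$ grouped by their joining line, respectively.

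For (i) I would note that, since $D$ has degree $n$, every line of $\pg(2,q)$ is an $i$-secant for some $i$ with $0\leq i\leq n$ (no line meets $D$ in more than $n$ points, and every line meets $D$ in some nonnegative number of points). Hence summing $u_i(D)$ over $0\leq i\leq n$ counts each line of the plane exactly once, giving the total number of lines, $q^2+q+1$. For (ii) I would count the incident pairs $(P,\ell)$ with $P\in D$ and $P\in\ell$ in two ways: fixing $P$ first, each of the $q+1$ points of $D$ lies on exactly $q+1$ lines, for a total of $(q+1)^2$; fixing $\ell$ first, an $i$-secant contributes exactly $i$ such pairs, for a total of $\sum_i i\,u_i(D)$, where the $i=0$ term contributes nothing. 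Equating the two counts yields (ii).

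For (iii) I would count ordered pairs $(P,P')$ of distinct points of $D$ in two ways. Directly there are $|D|(|D|-1)=(q+1)q$ such pairs. Grouping instead by the unique line joining $P$ and $P'$, an $i$-secant carries exactly $i(i-1)$ ordered pairs of distinct points of $D$, so the total is $\sum_i i(i-1)u_i(D)$, where terms with $i\leq 1$ vanish. Equating gives (iii).

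I do not anticipate any genuine obstacle here, as each part reduces to a single incidence count. The only points demanding care are purely bookkeeping: justifying that the summation ranges are correct, in particular that $n$ really is the largest secant size so that no line is omitted in (i) and no nonzero term is missed in (ii) and (iii); and invoking the projective-plane axiom that each pair of distinct points of $D$ lies on exactly one line, which is precisely what makes the ``group by joining line'' step in (iii) a genuine partition rather than an overcount.
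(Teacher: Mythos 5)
Your proof is correct and uses exactly the same argument as the paper: double counting the lines of $\pg(2,q)$, the incident point--line pairs with the point in $D$, and the pairs of distinct points of $D$ grouped by their joining line. The only difference is that you spell out the bookkeeping (e.g.\ ordered versus unordered pairs) which the paper leaves implicit.
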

\begin{proof}
These results come from double counting the number of lines of $\pg(2,q)$, the number of incident point/line pairs $(P,\ell)$ with $P\in D$ and $\ell\in\pg(2,q)$, and the number of pairs of distinct points $P,Q\in D$ respectively.
\end{proof}
These results can be reformulated for the intersection degree of a polynomial by restricting to affine points and lines that do not pass through $(0,1,0)$ as follows:
\begin{theorem}\label{thm:visums}
Let $f\in\F_q[x]$.  Then
\begin{itemize}
    \item[(i)] $\sum_{i=0}^q v_i(f)=q^2$,
    \item[(ii)] $\sum_{i=1}^{q} i v_i(f)=q^2$,
    \item[(iii)] $\sum_{i=2}^q i(i-1) v_i(f)=q(q-1)$.
\end{itemize}
\end{theorem}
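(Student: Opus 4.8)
The plan is to derive all three identities directly from Theorem~\ref{thm:uisums} applied to the $(q+1)$-set $D = S_f$, translating each $u_i(S_f)$ into $v_i(f)$ via the conversion formulas of Theorem~\ref{thm:v_itou_i}. Since $S_f$ is a $(q+1)$-set and $u_{q+1}(S_f)=0$, its degree $n$ is at most $q$; hence each sum over $i$ from $0$ (or $1$, or $2$) up to $n$ may be harmlessly extended to run up to $q$ by appending zero terms, so that the ranges match those in the target identities exactly.

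For part (i), I would start from $\sum_{i=0}^{q} u_i(S_f) = q^2+q+1$ and substitute $v_0=u_0$, $v_1=u_1-1$, $v_2=u_2-q$, and $v_i=u_i$ for $3\leq i\leq q$. Adding these, the two correction terms contribute $-1-q$ in total, so $\sum_{i=0}^{q} v_i(f) = (q^2+q+1)-1-q = q^2$. For part (ii), the same substitution into $\sum_{i=1}^{q} i\,u_i(S_f) = (q+1)^2$ produces correction terms now weighted by $i$: the $v_1$ term loses $1\cdot 1$ and the $v_2$ term loses $2\cdot q$, giving $\sum_{i=1}^{q} i\,v_i(f) = (q+1)^2 - 1 - 2q = q^2$. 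For part (iii), the factor $i(i-1)$ annihilates the $i=1$ correction entirely, and the $v_2$ term contributes $-2(2-1)q = -2q$ relative to the corresponding $u_2$ term, so $\sum_{i=2}^{q} i(i-1)\,v_i(f) = q(q+1) - 2q = q(q-1)$.

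The argument involves no genuine obstacle: the whole content is bookkeeping of how the three ``offset'' entries ($u_0,u_1,u_2$ versus $v_0,v_1,v_2$) propagate under the weights $1$, $i$, and $i(i-1)$. The only points requiring care are to apply each weight to the correct correction term and to record that $u_{q+1}(S_f)=0$, which is what lets the upper limit $q$ in the $v$-identities be consistent with the degree bound assumed in Theorem~\ref{thm:uisums}. I would present the three computations compactly, since each is a one-line substitution followed by cancellation.
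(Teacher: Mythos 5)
Your proposal is correct, and it is essentially the paper's own argument: the paper states Theorem~\ref{thm:visums} as a reformulation of Theorem~\ref{thm:uisums} obtained by accounting for the lines through $(0,1,0)$, which is precisely what your substitution of the conversion formulas $v_0=u_0$, $v_1=u_1-1$, $v_2=u_2-q$, $v_i=u_i$ ($i\geq 3$) accomplishes. All three weighted corrections ($-1-q$, $-1-2q$, and $-2q$) are computed correctly, and your observation that $u_{q+1}(S_f)=0$ justifies extending the sums to $i=q$ is the right point of care.
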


\section{Projective equivalence of polynomials}\label{sec:projequiv}
Open Problem 7 asked by Li and Pott \cite{LiPot} states:
``Explore the relation between the intersection distribution and the projective equivalence.  For instance, do there exist projectively inequivalent polynomials having the same intersection distribution?"

The existence of inequivalent ovals in $\pg(2,q)$ with $q$ even implies the answer to the latter question is yes, as the corresponding o-polynomials are projectively inequivalent by definition.  In this section, we explore projective equivalence more generally, and we give further examples of inequivalent polynomials that shed light on the relationship between the intersection distribution and the notion of projective equivalence.

We begin with a result characterising those projective transforms that, when applied to $S_f$ for any polynomial $f$, yield a set of the form $S_g$ for some polynomial $g$.

\begin{theorem}\label{thm:equiv}
Let $f=\sum_{i=0}^n a_i x^i$ be a polynomial over $\F_q$ with $a_n\neq 0 $. For an automorphism $\sigma$ of $\F_q$, we write $f^\sigma = \sum_{i=0}^n a_i^\sigma x^i$. Then for any automorphism $\sigma$ of $\F_q$, and any choice of elements $a,b,c,d,e\in \F_q$ with $a,e\neq 0$, we have that $f$ is projectively equivalent to the polynomial $g$ given by
\begin{align*}
g&=ef^\sigma(ax+b)+cx+d = e\sum_{i=0}^na_i^\sigma(ax+b)^i+cx+d.
\end{align*}
\end{theorem}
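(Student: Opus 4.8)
The plan is to exhibit an explicit collineation $\pi\in\pgammal(3,q)$ with $\pi(S_f)=S_g$, built as a product of an automorphic collineation and a homography, as sanctioned by the Fundamental Theorem of Field Planes. First I would dispose of the field automorphism. Let $\tau$ be the automorphic collineation sending $(x,y,z)\mapsto(x^\sigma,y^\sigma,z^\sigma)$. Since $f(x)^\sigma=\sum_i a_i^\sigma (x^\sigma)^i=f^\sigma(x^\sigma)$ and $x\mapsto x^\sigma$ is a bijection of $\F_q$, the map $\tau$ sends each affine point $(x,f(x),1)$ of $S_f$ to $(x^\sigma,f^\sigma(x^\sigma),1)$ and fixes $(0,1,0)$; hence $\tau(S_f)=S_{f^\sigma}$. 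It therefore remains to produce a homography carrying $S_{f^\sigma}$ to $S_g$, where $g=ef^\sigma(ax+b)+cx+d$, so that the affine operations $f^\sigma\mapsto g$ are realised purely inside $\pgl(3,q)$.

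Next I would write down the candidate homography $T$ given by the matrix
\[
M=\begin{pmatrix} 1 & 0 & -b \\ c & ea & da-cb \\ 0 & 0 & a \end{pmatrix},
\]
acting on homogeneous column vectors. Because the bottom row is $(0,0,a)$, its determinant is $a\cdot\det\begin{pmatrix}1&0\\ c&ea\end{pmatrix}=ea^{2}$, which is non-zero precisely because $a,e\neq0$; thus $M$ is invertible and defines an element of $\pgl(3,q)$. A quick check gives $M(0,1,0)^{T}=(0,ea,0)^{T}$, so $T$ fixes the point at infinity $(0,1,0)$ — this is the feature that guarantees the image will again be of the form $S_{(\,\cdot\,)}$.

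The core verification is then the affine bookkeeping. Applying $M$ to a generic affine point $(s,f^\sigma(s),1)^{T}$ of $S_{f^\sigma}$ yields $(s-b,\;cs+ea\,f^\sigma(s)+da-cb,\;a)^{T}$, which in normalised coordinates is $\bigl((s-b)/a,\;e f^\sigma(s)+c(s-b)/a+d,\;1\bigr)$. Writing $t=(s-b)/a$, so that $at+b=s$, the second coordinate becomes exactly $e f^\sigma(at+b)+ct+d=g(t)$, i.e. the image is the point $(t,g(t),1)$ of $S_g$. Since $a\neq0$, the assignment $s\mapsto t=(s-b)/a$ is a bijection of $\F_q$, so the $q$ affine points of $S_{f^\sigma}$ map bijectively onto the $q$ affine points of $S_g$; together with the fixed point at infinity this gives $T(S_{f^\sigma})=S_g$. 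Composing, $\pi=T\circ\tau$ satisfies $\pi(S_f)=S_g$, establishing projective equivalence. I do not expect a deep obstacle here: the only real care needed is to keep the composition order straight and to confirm that the substitution $t=(s-b)/a$ is a genuine bijection (hence that the image is all of $S_g$ rather than merely contained in it), and to confirm via the fixing of $(0,1,0)$ that the image is indeed a set of the form $S_g$.
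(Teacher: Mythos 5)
Your proof is correct and takes essentially the same approach as the paper: both decompose the collineation via the Fundamental Theorem of Field Planes into an automorphic collineation (sending $S_f$ to $S_{f^\sigma}$) followed by a homography fixing $(0,1,0)$ and the line $z=0$, and your matrix is exactly the paper's matrix multiplied by the scalar $a$, hence the same projective map. The only difference is cosmetic: you verify the homography forwards by applying it to the points of $S_{f^\sigma}$, whereas the paper applies the inverse matrix to a point of $S_g$ and checks it lands in $S_f$.
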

\begin{proof}
Let $f\in\F_q[x]$.  For any set $S_g$ where $g\in\F_q[x]$, the point $(0,1,0)$ is an internal nucleus and the line $z=0$ is the unique unisecant through this point.  
We observe that a collineation maps internal nuclei to internal nuclei and maps unisecants to unisecants; here we restrict our attention to those collineations that fix $(0,1,0)$ and fix the line $z=0$.  These collineations all send $S_f$ to $S_{g}$ for some polynomial $g$ that, by definition, is projectively equivalent to $f$.

We begin by observing that any automorphic collineation fixes $(0,1,0)$ and fixes the line $z=0$.  Suppose now that $P=(x,y,1)$ is an element of $S_g$, where $S_g$ is the image of $S_f$ under the automorphic collineation given by the automorphism $\sigma$.  Its preimage is $(x^{\sigma^{-1}},y^{\sigma^{-1}},1)$, and so we have $f(x^{\sigma^{-1}})=y^{\sigma^{-1}}$.  Applying $\sigma$ to both sides of this equation gives $y=f(x^{\sigma^{-1}})^{\sigma}$. For $f=\sum_{i=0}^na_ix^i$, we then have 
\[
    g(x)=\left(\sum_{i=0}^na_i(x^{\sigma^{-1}})^i\right)^\sigma =\sum_{i=0}^na_i^\sigma x^i = f^\sigma(x),
\]
as $\sigma$ is an automorphism.

We now consider homographies that fix $(0,1,0)$ and the line $z=0$.  Consider the homography $\pi$ given by a non-singular $3\times 3$ matrix $M$ with entries from $\F_q$.
If $\pi$ fixes the line $z=0$, then $M_{3,1}=M_{3,2}=0$; since $M$ is non-singular we then have $M_{3,3}\neq 0$.  As we obtain $\pi$ from any non-zero scalar multiple of $M$, without loss of generality we can set $M_{3,3}=1$. 

If $\pi$ fixes the point $(0,1,0)$, then $M_{1,2}=M_{3,2}=0$, and we have $M_{1,1},M_{2,2}\neq 0$ as $M$ is non-singular.  By setting $a=M_{1,1}^{-1}$, $b=-aM_{1,3}$, $c=aM_{2,1}$, $d=M_{2,3}+bca^{-1}$ and $e=M_{2,2}$, we can write $M$ as
\begin{align*}
M=\begin{pmatrix}
a^{-1}&0&-ba^{-1}\\
a^{-1}c&e&d-bca^{-1}\\
0&0&1
\end{pmatrix}.
\end{align*}

Let $P=(x,y,1)\in S_g$.  
We observe that $P^{\pi^{-1}}\in S_f$, and we have
\begin{align*}
P^{\pi^{-1}}&=\begin{pmatrix}
a&0&b\\
-ce^{-1}&e^{-1}&-e^{-1}d\\
0&0&1
\end{pmatrix}
\begin{pmatrix}
x\\y\\1
\end{pmatrix}=\begin{pmatrix}
ax+b\\
-ce^{-1}x+e^{-1}y-e^{-1}d\\
1
\end{pmatrix}
\end{align*}
As this is a point of $S_f$, we have
$e^{-1}(-cx+y-d)=f(ax+b)$, which rearranges to give $y=ef(ax+b)+cx+d$.
The result then follows from the Fundamental Theorem of Field Planes.
\end{proof}
\begin{corollary}\label{cor:oneintnuc}
Suppose $f=\sum_{i=0}^n a_i x^i$ is a polynomial over $\F_q$ that has only one internal nucleus.  Then any polynomial that is projectively equivalent to $f$ has the form  
\[
g=e\sum_{i=0}^na_i^\sigma(ax+b)^i+cx+d
\]
for some automorphism $\sigma$ of $\F_q$, and some $a,b,c,d,e\in \F_q$ with $a,e\neq 0$.
\end{corollary}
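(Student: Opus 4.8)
The plan is to show that Theorem~\ref{thm:equiv} already produces \emph{all} polynomials projectively equivalent to $f$ under the extra hypothesis that $f$ has a unique internal nucleus. Theorem~\ref{thm:equiv} gives us one direction: every $g$ of the stated form is projectively equivalent to $f$. So the content of the corollary is the converse, namely that if $g$ is any polynomial with $S_g$ projectively equivalent to $S_f$, then $g$ must arise in this way. The strategy is to take an arbitrary collineation $\pi$ realising the equivalence, $S_f^\pi = S_g$, and argue that after accounting for the freedom in $\pi$ it must in fact fix the point $(0,1,0)$ and the line $z=0$; the proof of Theorem~\ref{thm:equiv} then identifies exactly the polynomials reachable by such collineations.

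First I would recall from Theorem~\ref{thm:sfinternalnucleus} that $(0,1,0)$ is an internal nucleus of both $S_f$ and $S_g$, and that the line $z=0$ is the unique unisecant of $S_g$ through $(0,1,0)$. Since $\pi$ is a collineation, it carries internal nuclei of $S_f$ to internal nuclei of $S_g$, and unisecants through a given point to unisecants through its image. Here is where the hypothesis enters: because $f$ has \emph{only one} internal nucleus, $(0,1,0)$ is the sole internal nucleus of $S_f$, so $\pi$ must send it to the sole internal nucleus of $S_g$, which is again $(0,1,0)$. Thus $\pi$ fixes $(0,1,0)$. Next, since the unisecant through an internal nucleus is unique, $\pi$ must map the unisecant of $S_f$ through $(0,1,0)$ — that is, the line $z=0$ — to the unisecant of $S_g$ through $(0,1,0)$, which is again $z=0$. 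Hence $\pi$ fixes the line $z=0$ as well.

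Once we know $\pi$ fixes both $(0,1,0)$ and $z=0$, I would invoke the Fundamental Theorem of Field Planes to write $\pi$ as a homography composed with an automorphic collineation, and then apply precisely the analysis carried out in the proof of Theorem~\ref{thm:equiv}: the automorphic part contributes the $\sigma$ and the homography part (constrained to fix $(0,1,0)$ and $z=0$) has the displayed matrix form, yielding $g = e f^\sigma(ax+b)+cx+d$ with $a,e\neq 0$. This is exactly the claimed form, completing the argument.

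The main subtlety — and the step most in need of care rather than difficulty — is the uniqueness claim about the internal nucleus of $S_g$. We must be sure that $S_g$ has $(0,1,0)$ as its unique internal nucleus, so that $\pi^{-1}$ (equally a collineation realising the equivalence in the other direction) forces $(0,1,0)\mapsto(0,1,0)$; this follows because the number of internal nuclei is itself a projective invariant, so $S_g$ inherits from $S_f$ the property of having exactly one internal nucleus, and Theorem~\ref{thm:sfinternalnucleus} guarantees that this unique nucleus is the point $(0,1,0)$. The remainder is a direct appeal to the already-established Theorem~\ref{thm:equiv}, so no new computation is required.
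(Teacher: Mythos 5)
Your proof is correct and follows exactly the argument the paper intends: the corollary is left unproved there precisely because its content is the observation, already made in the proof of Theorem~\ref{thm:equiv}, that collineations send internal nuclei to internal nuclei and unisecants to unisecants, so uniqueness of the internal nucleus forces any equivalence to fix $(0,1,0)$ and $z=0$. Your handling of the one subtle point --- that $S_g$ also has $(0,1,0)$ as its \emph{unique} internal nucleus, by projective invariance of the nucleus count together with Theorem~\ref{thm:sfinternalnucleus} --- is exactly what is needed, so nothing is missing.
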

\begin{example}
Let $f=x^3\in\F_q[x]$ with $q$ odd.  Then every point $(\theta,\theta^3,1)$ with $\theta\neq 0$ is collinear with $(0,0,1)$ and $(-\theta, -\theta^3,1)$, and hence is not an internal nucleus.  This is thus an example of a polynomial $f$ for which $(0,1,0)$ is the only internal nucleus of $S_f$, and hence Corollary~\ref{cor:oneintnuc} completely determines the equivalence class of polynomials that are projectively equivalent to $f$.
\end{example}

It is interesting to compare the transforms described in Theorem~\ref{thm:equiv} to those arising from other notions of equivalence of polynomials.  Of particular relevance is the case of {\em Extended Affine (EA)-equivalence.}  
\begin{definition}
    Let $p$ be a prime.  Functions  $F,G\colon\F_p^m\rightarrow \F_p^m$ are said to be {\em EA-equivalent} if there are invertible affine transforms $A$ and $B$ of $\F_p^m$, and an affine transform $C$ of $\F_p^m$ such that $G(\mathbf{x})=(A\circ F\circ B)(\mathbf{x})+C(\mathbf{x})$.
\end{definition}
If we treat elements of $\F_{p^m}$ as elements of $\F_p^m$ by specifying a basis, then a polynomial in $\F_{p^m}[x]$ induces a vectorial $p$-ary function from $\F_p^m$ to $\F_p^m$, and we can view two such polynomials as being EA-equivalent if the corresponding $p$-ary vectorial functions are EA-equivalent.  We see immediately that the projective equivalences described in Theorem~\ref{thm:equiv} are examples of EA equivalence due to the fact that multiplication by an element of $\F_{p^m}$, or the application of an automorphism $\sigma$ of $\F_{p^m}$ are both linear operations over $\F_p$.  In the case of prime fields (i.e.\ $m=1$), every EA-equivalence is of this form, and is therefore also a projective equivalence.  However, for large $m$ it is readily seen that not all examples of EA-equivalence can arise this way, as demonstrated by the following example.
\begin{example}[Polynomials that are EA-equivalent but not projectively equivalent]
Let $\theta$ be a primitive element of $\F_9$ with minimal polynomial $x^2-x-1$.  Represent the element $x_1+\theta x_2$ for $x_1,x_2\in\F_3$ by $(x_1,x_2)^T$.  
The polynomial $f=x^2\in\F_9[x]$ induces the function
\[
    F\colon \begin{pmatrix}
        x_1 \\ x_2
    \end{pmatrix}\mapsto
    \begin{pmatrix}
        x_1^2+x_2^2 \\ 2x_1x_2+x_2^2
    \end{pmatrix}.
\]
Composing $F$ with the linear function given by the matrix $\begin{pmatrix}1&1\\0&2\end{pmatrix}$ gives the function
\[
    G\colon\begin{pmatrix}
        x_1 \\ x_2
    \end{pmatrix}\mapsto \begin{pmatrix}
        x_1+2 x_2^2+2x_1 x_2 \\ x_1x_2+2x_2^2
    \end{pmatrix},
\]
which is precisely the function arising from the polynomial $x^6\in \F_9[x]$.  
Thus, $x^2$ and $x^6$ are EA-equivalent over $\F_9$.  However, they are not projectively equivalent as they have different values of $v_0$ (see \cite{LiPot} Table 3.2).
\end{example}
The existence of EA-equivalences that are not projective equivalences arises from the fact that not all invertible linear transforms of $\F_p^m$ derive from multiplication by elements of $\F_{p^m}$ and/or application of an automorphism of $\F_{p^m}$, as well as the fact that automorphic collineations of $\pg(2,p^m)$ involve an automorphism applied to all three coordinates, whereas an EA-equivalence can be obtained by applying the corresponding $\F_p$-linear transformation to the $x$- and/or $y$-coordinates independently.

We conclude that the equivalence classes under EA-equivalence for polynomials $f$ where $S_f$ has a single internal nucleus are unions of projective equivalence classes.  If working over a prime field, they are precisely the projective equivalence classes.

When a polynomial has more than one internal nucleus, however, there are collineations additional to the ones described in Theorem~\ref{thm:equiv} that give rise to projectively equivalent polynomials.
\begin{example}[A projective equivalence not arising from Theorem~\ref{thm:equiv}] \label{ex:newprojequiv}
Consider $y=x^3$ over $\F_{2^m}$ with $m>2$.  Then the point $(0,0,1)$ is an internal nucleus.  If we swap the $y$- and $z$-coordinates, we swap this point with $(0,1,0)$ and swap the unisecant $y=0$ with the line at infinity $z=0$.  This can be achieved by the homography described by the matrix
\begin{align*}
\begin{pmatrix}
    1&0&0\\
    0&0&1\\
    0&1&0
\end{pmatrix}.
\end{align*}

Affine points other than $(0,0,1)$ have the form $(\theta, \theta^3,1)$ and thus get mapped to $(\theta,1,\theta^3)$.  Divide by $\theta^3$ (which is non-zero for all these points) to get the scalar multiple $(\theta^{-2},\theta^{-3},1)$.  Now apply the automorphism $\sigma:\theta\rightarrow \theta^{2^{m-1}}$ to each coordinate.  We get
\[
    \left(\sigma(\theta^{-2}),\sigma(\theta^{-3}),\sigma(1)\right)= (\theta^{-2^m},\theta^{-3\cdot2^{m-1}},1)=(\theta^{-1},\theta^{-2^{m-1}-1},1).
\]
The points of this form are precisely those that satisfy $y=x^{2^{m-1}+1}$. Hence $x^{2^{m-1}+1}$ is projectively equivalent to $x^3$.
\end{example}
We observe that to study such transformations, it suffices to consider polynomials $f$ for which $S_f$ has an internal nucleus at $(0,0,1)$.
\begin{lemma}\label{lem:originnucleus}
Let $f\in \F_q[x]$ be a polynomial for which $S_f$ has an internal nucelus at $(a,b,1)$.  Then $f$ is projectively equivalent to a polynomial $g$ with internal nucleus $(0,0,1)$ and for which the line $y=0$ is a unisecant.
\end{lemma}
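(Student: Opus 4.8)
The plan is to produce an explicit collineation taken from the family of Theorem~\ref{thm:equiv} that sends the given internal nucleus $(a,b,1)$ to $(0,0,1)$ and its unique unisecant to the line $y=0$. Recall that the homographies appearing in Theorem~\ref{thm:equiv} are exactly those fixing the internal nucleus $(0,1,0)$ and the line $z=0$, and that each of them maps $S_f$ to a set $S_g$ for some polynomial $g$ projectively equivalent to $f$. So it suffices to find one member of this family meeting two additional incidence requirements. Note first that, being an internal nucleus, $(a,b,1)$ is a point of $S_f$; since its final coordinate is nonzero it is one of the affine points $(x,f(x),1)$, so $b=f(a)$.

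Next I would locate the unique unisecant $\ell$ of $S_f$ through $(a,b,1)$. The crucial observation is that $\ell$ is not the vertical line $x=a$: that line also passes through $(0,1,0)\in S_f$ and so is a bisecant, not a unisecant. Consequently $\ell$ meets the line at infinity $z=0$ in a point $(1,t_0,0)$ for some $t_0\in\F_q$. It then suffices to choose a homography $\pi$ in the family that realises $(a,b,1)\mapsto(0,0,1)$ and $(1,t_0,0)\mapsto(1,0,0)$.

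Carrying this out, I would write the matrix of Theorem~\ref{thm:equiv} with parameters $A,B,C,D,E$ (renamed from $a,b,c,d,e$ to avoid clashing with the nucleus coordinates, and with $A,E\neq 0$). A short calculation gives that $\pi$ maps $(a,b,1)$ to $(0,0,1)$ precisely when $B=a$ and $D=-Eb$, and that it sends $(1,t_0,0)$ to $(1,0,0)$ precisely when $C=-EAt_0$. These conditions leave $A$ and $E$ unconstrained, so taking $A=E=1$ yields a concrete homography $\pi$ with the desired action, and the displayed matrix is non-singular since its determinant is $A^{-1}E\neq 0$.

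It remains to read off the conclusion. By Theorem~\ref{thm:equiv}, $\pi$ maps $S_f$ onto $S_g$ for a polynomial $g$ projectively equivalent to $f$. Since collineations carry internal nuclei to internal nuclei, $(0,0,1)=\pi\bigl((a,b,1)\bigr)$ is an internal nucleus of $S_g$; since they carry unisecants to unisecants, $\pi(\ell)$ is its unisecant, and as $\pi(\ell)$ contains both $(0,0,1)$ and $(1,0,0)$ it is exactly the line $y=0$. The one genuinely nonroutine ingredient is the reduction to these two incidence conditions, namely ruling out that $\ell$ is vertical and computing the action of $\pi$ on the line at infinity; given these, the existence of the parameters and the transport of nuclei and unisecants are immediate.
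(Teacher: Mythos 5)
Your proof is correct and takes essentially the same approach as the paper: both construct a homography fixing $(0,1,0)$ and the line $z=0$ that sends $(a,b,1)$ to $(0,0,1)$ and its (necessarily non-vertical) unisecant to $y=0$, then invoke preservation of internal nuclei and unisecants; indeed, your matrix with $A=E=1$, $B=a$, $D=-b$, $C=-t_0$ is exactly the paper's explicit matrix, whose parameter $d$ equals $-t_0$. The only difference is presentational --- the paper writes the matrix down directly and checks its action, whereas you derive the parameter constraints from the family in Theorem~\ref{thm:equiv}.
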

\begin{proof}
Let $f$ be a polynomial for which $S_f$ has an internal nucleus at $(a,b,1)$.  There is a unique unisecant through this point, and its equation has the form $dx+y-(da+b)z=0$ for some $d\in\F_q$.
The following matrix gives rise to a homography that fixes $(0,1,0)$ and the line $z=0$, and which maps the point $(a,b,1)$ onto $(0,0,1)$ and the line $dx+y-(da+b)z=0$ onto the line $y=0$:
\begin{align*}
\begin{pmatrix}
    1&0&-a\\
    d&1&-da-b\\
    0&0&1
\end{pmatrix}
\end{align*}
This homography maps the points of $S_f$ onto the points of a $(q+1)$-set that has internal nuclei at $(0,1,0)$ and $(0,0,1)$, and for which the lines $z=0$ and $y=0$ are unisecants.  This set has the form $S_g$ for some polynomial $g\in\F_q[x]$ of degree at most $q-1$.
\end{proof}
We can characterise polynomials $f$ for which $S_f$ has an internal nucelus at $(0,0,1)$ in the following way.
\begin{lemma}\label{lem:xtimesperm}
    Let $f$ be a polynomial for which $S_f$ has an internal nucleus at $(0,0,1)$.  Then $f$ can be written in the form $f=xg$ for some permutation polynomial $g\in\F_q[x]$ with $g(0)=0$.
\end{lemma}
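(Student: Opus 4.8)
The plan is to convert the internal-nucleus hypothesis at $(0,0,1)$ into a statement about the fibres of the map $t\mapsto f(t)/t$ and then read off the permutation property directly. First I would note that $(0,0,1)$ is an affine point, so if it lies in $S_f$ it must be the point $(0,f(0),1)$; hence $f(0)=0$, and $x\mid f$, allowing us to write $f=xg$ with $g(t)=f(t)/t$ for all $t\in\F_q^{*}$. Since the intersection distribution depends only on $f$ as a function, I may take $f$ (hence $g$) to have degree at most $q-1$, matching the normalised set produced by Lemma~\ref{lem:originnucleus}.

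Next I would enumerate the $q+1$ lines through $(0,0,1)$, namely $x=0$ and the lines $y=mx$ for $m\in\F_q$, and record their traces on $S_f$. The point at infinity $(0,1,0)$ lies on $x=0$ only, so $x=0$ is a bisecant; each line $y=mx$ contains $(0,0,1)$, and an affine point $(t,f(t),1)$ with $t\neq 0$ lies on it exactly when $g(t)=m$. Thus the internal-nucleus condition --- that the $q$ remaining points of $S_f$ are distributed one per line, on $q-1$ bisecants and a single unisecant $y=m_0x$ --- is precisely the assertion that $g$ is injective on $\F_q^{*}$, with $m_0$ the unique value omitted by $g$ on $\F_q^{*}$.

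In the normalised setting the unisecant through $(0,0,1)$ is $y=0$, i.e.\ $m_0=0$; equivalently $f$ (hence $g$) has no root in $\F_q^{*}$. Combining this with injectivity, $g$ restricts to a bijection $\F_q^{*}\to\F_q^{*}$. It then remains only to check $g(0)=0$: since $g$ has degree at most $q-2$ the power-sum identities give $\sum_{x\in\F_q}g(x)=0$, while $\sum_{t\in\F_q^{*}}g(t)=\sum_{y\in\F_q^{*}}y=0$, so $g(0)=0$ follows at once (alternatively one simply takes $g$ to be the interpolant with $g(0)=0$). Hence $g$ fixes $0$ and permutes $\F_q^{*}$, so it is a permutation polynomial with $g(0)=0$ and $f=xg$, as required.

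The routine parts are the enumeration of lines through $(0,0,1)$ and the secant/fibre dictionary. The step I would be most careful about is the value $g(0)$: the internal-nucleus hypothesis by itself only yields injectivity of $t\mapsto f(t)/t$ on $\F_q^{*}$, and it is genuinely the unisecant being $y=0$ (equivalently, $f$ having no nonzero root, so $m_0=0$) that forces $0$ to be the omitted value and upgrades the map to a permutation fixing $0$. Without that normalisation the naive quotient need not fix $0$ nor be bijective, so isolating and using this hypothesis is the crux of the argument.
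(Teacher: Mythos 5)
Your opening reduction and fibre dictionary are exactly the paper's argument in different clothing (the paper proves injectivity of $g$ on $\F_q^*$ via a collinearity determinant rather than by sorting the points onto the lines $y=mx$), but the step where you upgrade injectivity to a permutation is not available from the stated hypotheses. You appeal to ``the normalised setting'' in which the unisecant through $(0,0,1)$ is $y=0$; the lemma assumes only that $(0,0,1)$ is an internal nucleus, and the normalisation cannot be deduced from that. Indeed, the lemma as stated is false. Over $\F_3$ take $f=x^2+2x=x(x+2)$, so that $S_f=\{(0,1,0),(0,0,1),(1,0,1),(2,2,1)\}$. The point $(0,0,1)$ lies on the three bisecants $x=0$, $y=0$, $y=x$ and on the single unisecant $y=2x$, so it is an internal nucleus; yet any $g$ with $f(t)=tg(t)$ for all $t$ must have $g(1)=0$ and $g(2)=1$, so imposing $g(0)=0$ destroys injectivity: $f$ is not $x$ times a permutation polynomial fixing $0$. (It is $x$ times the permutation polynomial $x+2$, which sends $0$ to $2$ --- and this is the general picture you describe: the internal-nucleus hypothesis alone forces $g$ to be injective on $\F_q^*$ omitting exactly one value $m_0$, the slope of the unisecant, and only the choice $g(0)=m_0$ can give a permutation.)

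The reason your caution matters is that the paper's own proof glosses over precisely this point: after establishing injectivity of $g$ on $\F_q^*$, it asserts that if $g(0)=c\neq 0$ then $g=h+c(1-x^{q-1})$ for some \emph{permutation} polynomial $h$ with $h(0)=0$. Since $h$ agrees with $g$ on $\F_q^*$, the assertion that $h$ is a permutation is exactly the unproved claim $m_0=0$, which the example above refutes. So what you flagged as the crux is a genuinely missing hypothesis: the lemma should additionally assume that the unisecant of $S_f$ through $(0,0,1)$ is the line $y=0$ (equivalently, that $f$ has no root in $\F_q^*$). This costs nothing downstream, since Lemma~\ref{lem:originnucleus} produces exactly this normalised form before the lemma is used, and the proof of Theorem~\ref{thm:extraequiv} really does rely on $m_0=0$ (it needs $\theta g(\theta)\neq 0$ for $\theta\neq 0$). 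With that hypothesis added, your argument is complete and essentially the paper's; your power-sum derivation of $g(0)=0$ from $\deg g\leq q-2$ is a pleasant alternative to the paper's adjustment by $c(1-x^{q-1})$, though it requires $q>2$.
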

\begin{proof}
Let $f$ be a polynomial for which $S_f$ has an internal nucleus at $(0,0,1)$.  As $(0,0,1)$ satisfies $y=f(x)$, we have $f(0)=0$, so $f$ has the form $f=xg$ for some $g\in \F_q[x]$. We show that $g$ permutes the non-zero elements of $\F_q$.

Let $x,y \neq 0$. If $g(x)=g(y)$, then the points $(x,xg(x),1)$ and $(y,yg(y),1)=(y,yg(x),1)$ are elements of $S_f$. But
\begin{align*}
    \text{det}\begin{pmatrix}
        x&xg(x)&1\\
        y&yg(x)&1\\
        0&0&1
    \end{pmatrix}=0,
\end{align*}
so the points $(x,f(x),1)$, $(y,f(y),1)$ and $(0,0,1)$ are collinear. Since $(0,0,1)$ is an internal nucleus, we must have $x=y$.
Now if $g(0)=0$, then $g$ is a permutation polynomial; if $g(0)=c$, it can be written as $h+c(1-x^{q-1})$ for some permutation polynomial $h$ with $h(0)=0$.  But then
\[
    f(x)=xg(x)=x(h(x)+ c(1-x^{q-1}))= xh(x) + c(x-x^q) = xh(x),
\]
as $x^q=x$ for all $x\in\F_q$. Thus, we do not lose generality by supposing $g(0)=0$ and thus $g$ is a permutation polynomial.
\end{proof}
As permutation polynomials are well-studied objects with many known constructions, Lemma~\ref{lem:xtimesperm} could be used to obtain many examples of polynomials $f$ for which $S_f$ has more than one internal nucleus.  Here we apply it to obtain further information on the projective equivalence classes of such polynomials.
\begin{theorem}\label{thm:extraequiv}
Let $f$ be a polynomial of the form $f=xg$ for some permutation polynomial $g\in\F_q[x]$ with $g(0)=0$.   Then $f$ is projectively equivalent to the polynomial $xh$  where $h$ is the permutation polynomial with $h(0)=0$ that satisfies $h(x)=xg^{-1}(x^{-1})^{-1}$ for all $x\in\F_q^{*}$.
\end{theorem}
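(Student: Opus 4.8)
The plan is to realise the equivalence geometrically, using the coordinate-swap homography already exploited in Example~\ref{ex:newprojequiv}. Since $f=xg$ with $g$ a permutation polynomial satisfying $g(0)=0$, Lemma~\ref{lem:xtimesperm} tells us that $S_f$ has an internal nucleus at $(0,0,1)$ in addition to its internal nucleus at $(0,1,0)$; the corresponding unisecants are the lines $y=0$ and $z=0$. First I would apply the homography $\pi$ that interchanges the $y$- and $z$-coordinates, namely $(x,y,z)\mapsto(x,z,y)$. This collineation swaps the two internal nuclei and swaps the two unisecants, so the image again has an internal nucleus at $(0,1,0)$ whose unique unisecant is the line $z=0$; by Theorem~\ref{thm:sfinternalnucleus} it therefore equals $S_p$ for some $p\in\F_q[x]$ of degree at most $q-1$. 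Since the image also retains an internal nucleus at $(0,0,1)$ (the image of the original nucleus at infinity), Lemma~\ref{lem:xtimesperm} shows in advance that $p=xh$ for a permutation polynomial $h$ with $h(0)=0$.

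Next I would compute the images of the points of $S_f$ explicitly, treating three cases. A generic affine point $(x,xg(x),1)$ with $x\neq 0$ is sent to $(x,1,xg(x))$; since $g$ permutes $\F_q$ and $g(0)=0$ we have $xg(x)\neq 0$, so this normalises to the affine point $\bigl(g(x)^{-1},\,(xg(x))^{-1},\,1\bigr)$. The point $x=0$, namely $(0,0,1)$, is sent to $(0,1,0)$, the point at infinity of the image set. The original point at infinity $(0,1,0)$ is sent to the affine point $(0,0,1)$, consistent with $p(0)=0$. Thus no points are lost, and the affine part of $S_p$ is parametrised by $x\in\F_q^{*}$ as $\bigl(g(x)^{-1},(xg(x))^{-1}\bigr)$ together with the origin.

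To read off $p$ I would perform the change of variable $X=g(x)^{-1}$; as $x$ runs over $\F_q^{*}$ so does $X$, and inverting gives $x=g^{-1}(X^{-1})$. The second coordinate is then $(xg(x))^{-1}=x^{-1}X=X\,g^{-1}(X^{-1})^{-1}$, so $p(X)=X\,g^{-1}(X^{-1})^{-1}$ for $X\neq 0$ and $p(0)=0$. This exhibits $p=xh$ with $h(x)=g^{-1}(x^{-1})^{-1}$ on $\F_q^{*}$ and $h(0)=0$, as claimed. It remains to confirm that $h$ is genuinely a permutation polynomial: on $\F_q^{*}$ it is the composite of the bijections $x\mapsto x^{-1}$, $g^{-1}$, and $y\mapsto y^{-1}$, each of which permutes $\F_q^{*}$, and it fixes $0$, so $h$ permutes $\F_q$; being a function on $\F_q$ it is represented by a unique polynomial of degree at most $q-1$. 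As a sanity check I would specialise to $g=x^2$ over $\F_{2^m}$, where $g^{-1}(y)=y^{2^{m-1}}$ gives $h(x)=x^{2^{m-1}}$ and hence $xh=x^{2^{m-1}+1}$, recovering Example~\ref{ex:newprojequiv}.

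The only genuinely delicate step is the change of variable in the third paragraph: one must check that $X=g(x)^{-1}$ is a bijection of $\F_q^{*}$ (so that $p$ is well defined as a function of $X$) and carefully invert through $g^{-1}$, since it is easy to drop or misplace an inverse and arrive at a formula off by a factor of $x$. Everything else is bookkeeping about the three orbits of special points under $\pi$ and an appeal to the Fundamental Theorem of Field Planes to guarantee that the resulting map is a bona fide projective equivalence. Notably, unlike in Example~\ref{ex:newprojequiv}, no automorphic collineation is needed in the general argument; the automorphism there served only to display the image as a clean monomial.
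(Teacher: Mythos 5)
Your proof is correct and takes essentially the same route as the paper's: the coordinate-swap homography of Example~\ref{ex:newprojequiv}, an appeal to Lemma~\ref{lem:xtimesperm} to write the image as $xh$ with $h(0)=0$, normalisation of $(\theta,1,\theta g(\theta))$ to $(g(\theta)^{-1},\theta^{-1}g(\theta)^{-1},1)$, and a change of variable to read off $h$. One remark: what you actually derive (and what the paper's own proof and its corollary on permutation monomials also give) is $h(x)=g^{-1}(x^{-1})^{-1}$ for $x\in\F_q^{*}$, so that the equivalent polynomial $xh$ satisfies $(xh)(x)=xg^{-1}(x^{-1})^{-1}$; the theorem's literal wording $h(x)=xg^{-1}(x^{-1})^{-1}$ carries a spurious factor of $x$, so your closing ``as claimed'' matches the intended statement rather than its literal text.
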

\begin{proof}
As in Example~\ref{ex:newprojequiv}, $f$
is projectively equivalent to a polynomial $f^\prime$ where the points of $S_{f^\prime}$ are obtained by swapping the $y$- and $z$-coordinates of the points of $S_f$, thereby swapping the internal nuclei $(0,0,1)$ and $(0,1,0)$.  Since $S_{f^\prime}$ has an internal nucleus at $(0,0,1)$, we can write $f^\prime=xh$ for some permutation polynomial $h$ satisfying $h(0)=0$.  The points of $S_{f^\prime}$ with non-zero $x$-coordinate have the form $(\theta,1,\theta g(\theta))$ with $\theta\in\F_q^*$.

As $g$ is a permutation polynomial with $g(0)=0$, it follows that  $\theta g(\theta)\neq 0$ when $\theta\neq 0.$ Thus, we have $(\theta,1,\theta g(\theta)) = (g(\theta)^{-1},\theta^{-1} g(\theta)^{-1},1)$.  Points of this form satisfy $y=xg^{-1}(x^{-1})^{-1}$, so for $x\in\F_q^*$ we have $h(x)=g^{-1}(x^{-1})^{-1}$.
\end{proof}
\begin{corollary}
Let $x^d$ be a permutation monomial in $\F_q[x]$ with inverse $x^e$.  Then $x^{d+1}$ is projectively equivalent to $x^{e+1}$.
\end{corollary}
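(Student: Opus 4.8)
The plan is to derive this corollary as an immediate specialisation of Theorem~\ref{thm:extraequiv} to the case where the permutation polynomial $g$ is a monomial. The theorem tells us that for $f = xg$ with $g$ a permutation polynomial fixing $0$, the polynomial $f$ is projectively equivalent to $xh$, where $h$ is the permutation polynomial with $h(0)=0$ satisfying $h(x) = xg^{-1}(x^{-1})^{-1}$ for all $x \in \F_q^*$. So the first step is simply to set $g(x) = x^d$, observe that $f = xg = x \cdot x^d = x^{d+1}$, and check that $g(x) = x^d$ is indeed a permutation polynomial with $g(0)=0$, which holds precisely because $x^d$ is assumed to be a permutation monomial.

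The second step is to identify the compositional inverse $g^{-1}$. By hypothesis the inverse of the permutation $x \mapsto x^d$ is $x \mapsto x^e$, so $g^{-1}(x) = x^e$. Substituting into the formula from Theorem~\ref{thm:extraequiv}, for $x \in \F_q^*$ we get
\[
h(x) = x\,g^{-1}(x^{-1})^{-1} = x\,\bigl((x^{-1})^e\bigr)^{-1} = x \cdot x^{e} = x^{e+1}.
\]
Then $xh(x) = x \cdot x^{e} = x^{e+1}$ on $\F_q^*$, and since both $x^{d+1}$ and $x^{e+1}$ vanish at $0$ (assuming $d,e \geq 1$, which follows from $x^d, x^e$ being permutation monomials distinct from the identity situation, or simply because the monomials agree at $0$), the polynomial $xh$ is exactly $x^{e+1}$ as a function on all of $\F_q$. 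Hence $x^{d+1}$ is projectively equivalent to $x^{e+1}$.

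The only genuinely substantive point — and the one I would flag as the main thing to verify rather than a true obstacle — is the bookkeeping around the value at $0$ and the identification of $h$ as a genuine polynomial rather than merely a function. Theorem~\ref{thm:extraequiv} already guarantees that $h$ is the unique permutation polynomial with $h(0)=0$ realising the stated values on $\F_q^*$, so the functional computation above determines $h$ as a function on $\F_q$, and since $x^e$ (and hence $x^{e+1}$) is itself such a reduced polynomial, no reduction modulo $x^q - x$ is needed beyond observing that both sides already represent the same function. Thus the corollary follows directly, with essentially no calculation beyond the single substitution $g^{-1}(x) = x^e$ into the formula supplied by the theorem.
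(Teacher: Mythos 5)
Your overall strategy --- specialising Theorem~\ref{thm:extraequiv} to $g = x^d$, so that $f = xg = x^{d+1}$ and $g^{-1} = x^e$ --- is exactly the intended derivation; the paper treats this corollary as immediate and gives no separate proof. However, your computation contains a genuine internal contradiction. You first compute, from the formula as printed in the theorem statement,
\[
h(x) = x\,g^{-1}(x^{-1})^{-1} = x\cdot x^e = x^{e+1},
\]
and then in the very next sentence write $xh(x) = x\cdot x^{e} = x^{e+1}$. These two lines are incompatible: if $h(x) = x^{e+1}$, then $xh(x) = x^{e+2}$, and the theorem's conclusion that $f$ is projectively equivalent to $xh$ would give that $x^{d+1}$ is equivalent to $x^{e+2}$, which is not the corollary and is false in general (over $\F_{11}$ with $d=3$, $e=7$, the paper's own example shows $x^4$ is equivalent to $x^{8} = x^{e+1}$, not $x^{9}$).

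The source of the trouble is a misprint in the statement of Theorem~\ref{thm:extraequiv} itself, which your verbatim reliance on the printed formula inherited. The proof of that theorem derives that the points of $S_{f^{\prime}}$ satisfy $y = x\,g^{-1}(x^{-1})^{-1}$; since $f^{\prime} = xh$, this gives $h(x) = g^{-1}(x^{-1})^{-1}$ with no leading factor of $x$, exactly as stated in the final line of that proof. With the corrected formula, $h(x) = \bigl((x^{-1})^e\bigr)^{-1} = x^e$, hence $xh = x^{e+1}$, and the corollary follows cleanly; your remaining bookkeeping about the value at $0$ and the identification of $h$ as a reduced polynomial is then fine. So your answer and your route are both the intended ones, but as written the argument does not hang together: you need either to flag and correct the misprinted formula, or to work from the theorem's proof, rather than silently dropping a factor of $x$ midway in order to land on the desired conclusion.
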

\begin{example}
Consider $x^4\in\F_{11}[x].$  As $3\nmid 10$, we know $x^3$ is a permutation polynomial that fixes $0$.  The inverse of $x^3$ in $\F_{11}[x]$ is $x^7$.  Thus, by Theorem~\ref{thm:extraequiv}, we have that $x^4$ is projectively equivalent to $x^8$ over $\F_{11}$.  We observe that these functions are not EA-equivalent, as they have different degrees. 
\end{example}
This example shows that projective equivalence is not a refinement of EA-equivalence when considering polynomials whose point sets have more than one internal nucleus.  In general, the polynomials in the projective equivalence class of a polynomial $f$ will have the same number of internal nuclei as $f$, and can be obtained from $f$ by a sequence of those transformations described in Theorem~\ref{thm:equiv} and Theorem~\ref{thm:extraequiv}. We observe that o-polynomials are polynomials for which all $q+1$ points are internal nuclei.  This implies that the number of internal nuclei does not uniquely determine the projective equivalence class of a polynomial, even for polynomials with the same intersection distribution.

\subsection{Further examples of projectively inequivalent polynomials with the same intersection distribution}
The intersection distribution is a notable invariant of projective transformations: if two polynomials have different intersection distributions, they are necessarily projectively inequivalent.  We now turn our attention to some additional examples of projectively inequivalent polynomials that have the same intersection distribution, in order to cast light on the limitations of intersection distribution in characterising polynomials up to projective equivalence.

We begin by resolving the question posed by Li and Pott in Open Question 3 of \cite{LiPot} that asks which polynomials give rise to the $(q+1)$-sets of large non-hitting index described in their Examples 2.2 and 2.3, and we show that these sets provide an example of inequivalent polynomials having the same intersection distribution.
\subsubsection{Polynomials giving rise to the $(q+1)$-sets of Li and Pott's Examples 2.3 and 2.2}
We first consider the $(q+1)$-sets described in Example 2.3 of \cite{LiPot}.  These examples consist of a $(q+1)$-arc in $\pg(2,q)$ with $q$ odd, but with one point of the arc exchanged for another point that was not on the arc.  In odd characteristic a $(q+1)$-arc is a conic, and every conic is equivalent to that determined by the polynomial $y=x^2$.  Without loss of generality, we can choose to remove the point $(0,0,1)$.  If we replace it by another point on the line $x=0$, then the resulting $(q+1)$-set still has $(0,1,0)$ as an internal nucleus and hence is given by a polynomial of the form $y=f(x)$.  (We could replace it by a point on another line, but then we would have to perform a change of coordinates in order to determine the relevant polynomial, and the resulting $(q+1)$-set would be projectively equivalent to one of this type anyway.) We note that the tangent to the conic at $(0,0,1)$ is the line $y=0$.

If we replace the point $(0,0,1)$ with $(0,c,1)$ for some $c \in \F_q$, then the resulting $(q+1)$-set arises from the polynomial 
\begin{align*}
y=x^2+c(1-x^{q-1}).
\end{align*}
Notice that there is a unique polynomial of degree at most $q-1$ that is satisfied by the $q$ points other than $(0,1,0)$, and it is easily seen that the above polynomial works, as $x^{q-1}$ is $1$ for $x\neq 0$ and $0$ when $x=0$.

When $-c$ is a square, the new point is an external point of the conic: it lies on the tangents to the conic at the points $(\pm\sqrt{-c},-c,1)$, and so this set corresponds to Example 2.3(1).  When $-c$ is a non-square, it is an internal point (it lies on no tangents of the conic) and the set is the second type described in Example 2.3(2).  These are depicted in Figure~\ref{fig:oddqfirstexamples}.

We note in passing that when $-c$ is a non-square, then the $(q+1)$-set has only one internal nucleus, but when $-c$ is a square it has three internal nuclei, at $(0,1,0)$, $(\sqrt{-c},-c,1)$ and $(-\sqrt{-c},-c-1)$.  By applying  transforms that swap $(\sqrt{-c},-c,1)$ with $(0,1,0)$, we can also map this $(q+1)$-set onto one described by the polynomial $x^{q-2}+d(1-x^{q-1})$ for some non-zero $d$.
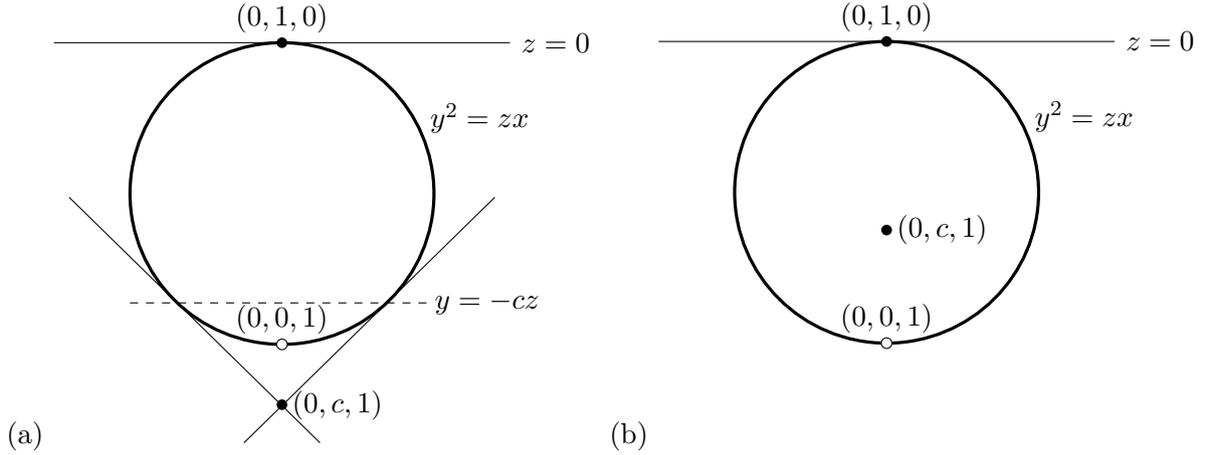
\begin{figure}
\begin{center}(a)
\begin{tikzpicture}
  \draw [very thick] (0,0)  circle  (2);

  \draw[-] (-3,2) -- (3,2);

  \fill (0,2)  circle (2pt) node[above] {$(0,1,0)$};
\draw [black, fill=white] (0,-2) circle (2pt) node[above] {$(0,0,1)$};
  \fill (0,-2.8) circle (2pt) node[right] {$(0,c,1)$};  

   \draw[-] (-.5,-3.305) -- (2.8,-.05);
   \draw[-] (.5,-3.305) -- (-2.8,-.05);
   \draw[dashed](-2,-1.45)--(2,-1.45);
   \node at (2.7,-1.45) {$y=-cz$};
   \node at (2.6,1) {$y^2=zx$};
   \node at (3.6,2) {$z=0$};
\end{tikzpicture}
(b)
\begin{tikzpicture}
  \draw [very thick] (0,0)  circle  (2);

  \draw[-] (-3,2) -- (3,2);

  \fill (0,2)  circle (2pt) node[above] {$(0,1,0)$};
\draw [black, fill=white] (0,-2) circle (2pt) node[above] {$(0,0,1)$};
  \fill (0,-.5) circle (2pt) node[right] {$(0,c,1)$};

   \node at (2.6,1) {$y^2=zx$};
   \node at (3.6,2) {$z=0$};
   \node at (0,-3.2) {};
\end{tikzpicture}
\end{center}
\caption{$(q+1)$-sets from (a) Example 2.3(1) and (b) Example 2.3(2) of \cite{LiPot}}\label{fig:oddqfirstexamples}
\end{figure}

We now turn our attention to the first type of $(q+1)$-set described in Example 2.3(2): a conic with a point replaced by an external point that lies on the tangent to the conic at the deleted point.  Here we choose our conic differently in order to ensure the resulting $(q+1)$-set has the line $z=0$ as a unisecant containing the internal nucleus $(0,1,0)$ without having to perform a subsequent change of coordinates.  
Start with the conic of equation $z^2=xy$.  The line $z=0$ intersects it in the points $(0,1,0)$ and $(1,0,0)$.  We note that the tangents to this conic at these points intersect in the point $(0,0,1)$.

We remove the point $(1,0,0)$ and replace it by the point $(0,0,1)$.  Observe that a point with coordinates $(\theta,\theta^{-1},1)$ for some $\theta\neq 0$ satisfies the equation $z^2=xy$, thus the remaining points of the conic all have this form.  From this, we see that the polynomial $f$ giving rise to this set is simply $y=x^{q-2}$, since this is satisfied by the remaining points of the conic, and also by the replacement point $(0,0,1)$.

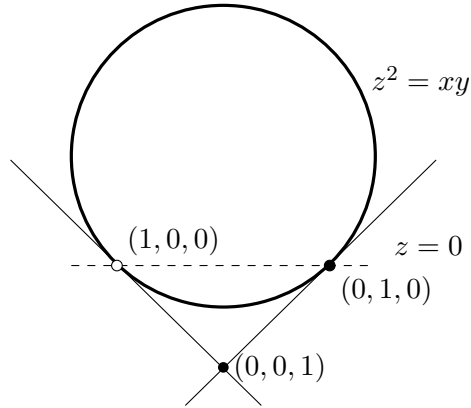
\begin{figure}
\begin{center}
    \begin{tikzpicture}
  \draw [very thick] (0,0)  circle  (2);
  \node at (2.7,-1.45) [above]{$z=0$};
\draw [fill] (1.4,-1.45) circle (2pt) node[below right] {$(0,1,0)$};

  \fill (0,-2.8) circle (2pt) node[right] {$(0,0,1)$};  

   \draw[-] (-.5,-3.305) -- (2.8,-.05);
   \draw[-] (.5,-3.305) -- (-2.8,-.05);
   \draw[dashed](-2,-1.45)--(2,-1.45);
   
   \node at (2.6,1) {$z^2=xy$};
   \draw [fill=white] (-1.4,-1.45) circle (2pt) node[above right] {$(1,0,0)$};
\end{tikzpicture}
\end{center}
    \caption{The first $(q+1)$-set of Example 2.3(2) of \cite{LiPot}}\label{fig:oddqsecondexample}
\end{figure}

This gives an example of two inequivalent polynomials with the same intersection distribution over $\F_q$ with $q$ odd with $q\geq5$: $y=x^2+c(1-x^{q-1})$ where $-c$ is a non-square, and $y=x^{q-2}$.  Both corresponding $(q+1)$-sets contain a maximal $q$-arc that lies in a unique conic, but in one case the remaining point is an internal point of the conic and in the other case it is an external point of the conic, hence the two sets are projectively inequivalent.

We now consider the $(q+1)$-set of Example 2.2, in which $q$ is taken to be even.  For case (1) of this example, we have the same situation as in case (1) of Example 3.3: a $(q+1)$-arc with a point removed and replaced by a point that is not on the tangent to the removed point.  There are some key differences to the characteristic 2 case: firstly, there exist projectively inequivalent $(q+1)$-arcs, which can lead to projectively inequivalent $(q+1)$-sets via this construction, and thus projectively inequivalent polynomials arising from it.

We start with the case where the $(q+1)$-arc is a conic.  Taking the conic of equation $yz+x^2$ and replacing the point $(0,0,1)$ with a point $(0,c,1)$ gives $S_f$ for $f=x^2+c(1+x^{q-1})$.  We note that another difference of the characteristic 2 case is that every point other than the nucleus (which is $(1,0,0)$ for the given conic) lies on a single tangent to the $(q+1)$-arc, hence we have the same intersection distribution for each non-zero choice of $c$.

More generally, if we take a $(q+1)$-arc and (without loss of generality) choose coordinates such that the line $z=0$ is a tangent to the arc at the point $(1,0,0)$ and such that $(1,0,0)$ is its nucleus, then the remaining points are of the form $(x,f(x),1)$ for some o-polynomial $f$.  Thus for any o-polynomial $f$ and non-zero $c$, the polynomial $f+c(1+x^{q-1})$ corresponds to a set arising from this construction. 

Example 2.2 (2) does not have an internal nucleus, and hence does not arise from a polynomial.  To see this, we note that the point $Q$ lies on a single tangent of the original $(q+1)$-arc, so each of the remaining $q$ points of the arc lie on a $3$-secant through $Q$, hence no remaining point is an internal nucleus.

\subsubsection{Permutation Polynomials and their Inverses}
Permutation polynomials and their inverses constitute an interesting example of one way in which projectively inequivalent polynomials having the same intersection distribution can arise. Note that o-polynomials are a special case of this phenomenon.

\begin{lemma}
If $f\in\F_q[x]$ is a permutation polynomial, then $(1,0,0)$ is a nucleus of $S_f$.
\end{lemma}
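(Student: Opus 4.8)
The plan is to verify the two defining conditions of an (external) nucleus directly: that $(1,0,0)\notin S_f$, and that every one of the $q+1$ lines through $(1,0,0)$ is a unisecant of $S_f$. The first is immediate, since every point of $S_f$ has the form $(x,f(x),1)$ or $(0,1,0)$, neither of which is a scalar multiple of $(1,0,0)$.

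For the second, I would first parametrise the lines through $(1,0,0)$. A line $ax+by+cz=0$ passes through $(1,0,0)$ precisely when $a=0$, so these lines have equation $by+cz=0$ with $(b,c)\neq(0,0)$, and I would split into two cases. When $b=0$ the line is $z=0$, the line at infinity, which contains the point $(0,1,0)$ of $S_f$ but none of the affine points $(x,f(x),1)$ (these have nonzero $z$-coordinate); hence it is a unisecant. When $b\neq 0$, scaling lets me take $b=1$, giving the ``horizontal'' line $y+cz=0$. Substituting $(0,1,0)$ gives $1\neq 0$, so this line avoids the point at infinity, while an affine point $(x,f(x),1)$ lies on it exactly when $f(x)=-c$.

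The one place the hypothesis enters is this last step: since $f$ is a permutation polynomial, the equation $f(x)=-c$ has exactly one solution $x\in\F_q$ for each $c$, so the line $y+cz=0$ meets $S_f$ in precisely one point. As $c$ ranges over $\F_q$ we obtain the remaining $q$ lines through $(1,0,0)$, each a unisecant, which together with the line at infinity exhaust all $q+1$ lines through $(1,0,0)$. I do not expect a genuine obstacle here: the statement is essentially the dual of Theorem~\ref{thm:sfinternalnucleus}, where $(0,1,0)$ is an internal nucleus because $f$ is a \emph{function} (each vertical line is a unisecant), and $(1,0,0)$ is a full nucleus because $f$ is moreover a \emph{bijection} (each horizontal line is a unisecant). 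The main thing to be careful about is simply ensuring the case analysis on $(b,c)$ covers every line through $(1,0,0)$ exactly once.
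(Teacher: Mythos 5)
Your proposal is correct and follows essentially the same route as the paper's proof: identify the lines through $(1,0,0)$ as the line at infinity $z=0$ (a unisecant through $(0,1,0)$) together with the $q$ horizontal lines $y+cz=0$, and use the permutation property to conclude each horizontal line meets $S_f$ exactly once. Your explicit check that $(1,0,0)\notin S_f$ is a small point the paper leaves implicit, but it does not change the argument.
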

\begin{proof}
The lines through $(1,0,0)$ are precisely the $q$ lines with equations $y+cz=0$ for some $c \in \F_q$ and the line with equation $z=0$. The latter intersects $S_f$ only in the point at infinity $(0,1,0)$. The other $q$ lines correspond to the affine ``horizontal'' lines with equations $y=c$ for some $c\in\F_q$. If $f$ is a permutation polynomial, then by definition there is a unique point of $S_f$ on each such line.
\end{proof}

Suppose we take the set $S_f$ and replace the point $(0,1,0)$ by the point $(1,0,0)$ to obtain a new $(q+1)$-set $S^\prime$.  As we have not changed any of the affine points, we see that $S^\prime$ has the same intersection distribution as $S_f$.  Now, the points of $S^\prime\setminus (1,0,0)$ were those that satisfy $y=f(x)$, hence they also satisfy $x=f^{-1}(y)$.  Applying the projectivity determined by the matrix
\begin{align*}
\begin{pmatrix}
    0&1&0\\
    1&0&0\\
    0&0&1
\end{pmatrix}
\end{align*}
swaps the $x$- and $y$-coordinates of points in $S^\prime$, and thus sends $S^\prime$ into the $(q+1)$-set determined by the polynomial $y=f^{-1}(x)$.  As this set is projectively equivalent to $S^\prime$, it has the same intersection distribution.  While in some cases it may also be projectively equivalent to $S_f$, in general this will not be the case, as shown by the following example.
\begin{example}\label{ex:nonequivinverse}
Consider the polynomial $y=x^3$ over $\F_q$ with $q\neq 1\pmod{3}$ and $q>9$.  This is a known example of a permutation polynomial. The points of $S_f$ are precisely the rational points of the cubic curve of the equation $z^2y=x^3$.  Any collineation maps a cubic curve into another cubic curve.  So, if there is a collineation that maps $S_f$ into $S^\prime$, as defined above, then the points of $S^\prime$ are those of a cubic curve.  But $S^\prime$ and $S_f$ have $q\geq 10$ points in common.  This contradicts the fact that by B\'{e}zout's Theorem, any two distinct cubics without a common component intersect in at most 9 points.  Hence, we conclude that $S_f$ and $S^\prime$ are not projectively equivalent, and therefore neither are $f$ and $f^{-1}$.
\end{example}
Example~\ref{ex:nonequivinverse} provides an interesting contrast between projective equivalence and the notion of {\em CCZ-equivalence} introduced by Carlet, Charpin and Zinoviev in \cite{ccz}.
\begin{definition}
Let $p$ be a prime.  Functions $F,G\colon\F_p^m\rightarrow\F_p^m$ are {\em CCZ-equivalent} if there is an invertible affine transform of $\F_p^m\times\F_p^m$ that maps the set $\{(\mathbf{x},F(\mathbf{x})):\mathbf{x}\in\F_p^m\}$ to the set $\{(\mathbf{x},G(\mathbf{x})):\mathbf{x}\in\F_p^m\}$.
\end{definition}
EA-equivalence is a special case of CCZ-equivalence, so those projective equivalences that coincide with EA-equivalences are also CCZ-equivalences; likewise those EA-equivalences that are not projective equivalences also give examples of CCZ-equivalences that are not projective equivalences. 

Let $f\in\F_{p^m}[x]$ be a permutation polynomial. In general, $f$ is not EA-equivalent to its inverse polynomial $f^{-1}$ as the corresponding vectorial function from $\F_p^m$ to $\F_p^m$ does not necessarily have the same degree.  However, $f$ and $f^{-1}$ are CCZ-equivalent since the map that sends $(x,y)\in\F_{p^m}^2$ to $(y,x)\in\F_{p^m}^2$ can be viewed as an invertible affine transform of $\operatorname{AG}(2m,p)$ that maps the graph of $f$ to that of $f^{-1}$.  This affine transform can also be interpreted as the collineation of $\pg(2,p^m)$ described in Example~\ref{ex:nonequivinverse}.  This does not give a projective equivalence between $S_f$ and $S_{f^{-1}}$: when applied to $S_f$ it sends the point at infinity $(0,1,0)$ to $(1,0,0)$, which is not an element of $S_{f^{-1}}$.  

It would be interesting to determine whether the projective equivalence described in Theorem~\ref{thm:extraequiv} is also a CCZ-equivalence in general, as this would suffice to determine whether projective equivalence is a refinement of CCZ-equivalence.

\section{Algebraic viewpoint}\label{sec:algebraic}

From an algebraic viewpoint, determining the intersection distribution of a polynomial $f \in \F_q[x]$ is equivalent to counting the number of distinct roots over $\F_q$ of $f-ax-b$ (without multiplicity), as $(a,b)$ runs through $\F_q^2$.  When $f$ is a monomial $x^d$, this is a question about the roots of trinomials.

Relevant results occur in the polynomial literature.  In \cite{Blu}, for $q=p^s$, it is determined that $x^{{p^i}+1}+cx+d$ ($cd \neq 0$) has $n$ roots in $\F_q$, where $n \in \{0,1,2,p^{\mathrm{gcd}(i,s)}+1\}$, and the number of $c,d$ attaining each $n$ are found; this is equivalent to determining the non-zero entries of the intersection distribution of $f=x^{p^i+1}$ and the complete intersection distribution, respectively.   In \cite{Vil}, the number of possible roots of $x^{p^i}+cx+d$ ($c \neq 0$) are identified and the complete intersection distribution of $x^{p^i}$ is determined for some cases, an analysis which is completed in \cite{CouHen}. (The intersection distributions for both  are also calculated in \cite{LiPot}.)  In general however, the focus of the polynomial literature has been analysing the roots and factor structure for fixed $c$ and $d$, often motivated by applications.

In this section, we show that it is fruitful to analyse the polynomials corresponding to the intersection of $y=x^d$ with $y=ax+b$, by separately considering the cases $y=ax$, $y=b$ and $y=ax+b$ with $a,b \neq 0$.

\subsection{The degree of $S_f$}
\label{subsec:degree}

Recall that the degree of $S_f$ is the largest $i$ such that $v_i(f)$ is non-zero, i.e. the largest number of roots in $\F_q$ possible for the polynomial $f-ax-b$ as $(a,b)$ ranges over $\F_q^2$. It is clear that the degree of $S_f$ is a very natural concept when the problem is viewed algebraically.  This concept was defined in \cite{LiPot}, but was not a key focus in that paper nor its follow-up works \cite{KyuLiPot}, \cite{LiXiong}. In this section, we establish an understanding of the degree of $S_f$, focussing on the case when $f$ is a monomial $x^d$.  

We begin with some upper bounds.

\begin{lemma}\label{lem:degbound}
Let $d \geq 2$.  For a polynomial $f$ over $\F_q$ of degree $d$, $\mathrm{deg}(S_f) \leq d$.
\end{lemma}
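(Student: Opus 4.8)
The plan is to work directly with the algebraic characterisation of $\mathrm{deg}(S_f)$ recalled just before the statement: $\mathrm{deg}(S_f)$ equals the maximum, over all pairs $(a,b)\in\F_q^2$, of the number of distinct roots in $\F_q$ of the polynomial $f(x)-ax-b$. It therefore suffices to show that for every choice of $(a,b)$, this polynomial has at most $d$ roots in $\F_q$, since then $v_i(f)=0$ for all $i>d$.

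First I would fix an arbitrary pair $(a,b)\in\F_q^2$ and set $g(x)=f(x)-ax-b$. Writing $f=\sum_{i=0}^d a_i x^i$ with $a_d\neq 0$, the key observation is that because $d\geq 2$, subtracting the degree-one polynomial $ax+b$ leaves the leading term $a_d x^d$ untouched. Hence $g$ has degree exactly $d$, and in particular $g$ is not the zero polynomial. Invoking the standard fact that a nonzero polynomial of degree $d$ over a field has at most $d$ roots, I conclude that $g$ has at most $d$ roots in $\F_q$.

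Since $(a,b)$ was arbitrary, the equation $f(x)=ax+b$ never has more than $d$ solutions in $\F_q$, so $v_i(f)=0$ for all $i>d$. By the definition of $\mathrm{deg}(S_f)$ as the largest index of a nonzero entry in the intersection distribution, this yields $\mathrm{deg}(S_f)\leq d$.

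I do not expect a genuine obstacle here; the one point that must be handled carefully is the role of the hypothesis $d\geq 2$, which is precisely what guarantees that the linear correction $ax+b$ can neither cancel the leading term of $f$ nor reduce $g$ to the zero polynomial. This is exactly what fails when $d=1$: there, taking $a=a_1$ and $b=a_0$ makes $g$ identically zero, giving a line meeting the graph of $f$ in all $q$ points, so that $\mathrm{deg}(S_f)=q$ rather than $1$. The restriction $d\geq 2$ rules out this degenerate case, and I would remark on it explicitly to make clear why the bound takes the clean form stated.
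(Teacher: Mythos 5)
Your proof is correct and follows essentially the same route as the paper: both observe that $f-ax-b$ still has degree $d$ (since $d\geq 2$ means the linear correction cannot disturb the leading term) and then invoke the standard bound that a nonzero polynomial of degree $d$ over a field has at most $d$ roots. Your additional remark on why $d\geq 2$ is needed, illustrated by the degenerate case $d=1$, is a nice clarification of a point the paper leaves implicit, but it does not change the argument.
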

\begin{proof}
It is well-known that the number of roots of a polynomial over a field is at most the degree of the polynomial; the polynomial $f-ax-b$ has degree $d$, and so has at most $d$ roots for any $(a,b) \in \F_q^2$.
\end{proof}

\begin{theorem}\label{thm:ubound}
Let $2 < d < q-1$ and $f=x^d$.  Then 
$$\mathrm{deg}(S_f) \leq \mathrm{min}(d,q-d+1).$$
Specifically, an upper bound for the number of points of intersection of $y=f(x)$ and $y=ax+b$ is given by
\begin{itemize}
\item[(i)] $q-d+1$ if $a \neq 0$ and $b=0$;
\item[(ii)] $q-d$ if $a,b \neq 0$;
\item[(iii)] $q-d-1$ if $a=0$ and $b \neq 0$.
\end{itemize}
\end{theorem}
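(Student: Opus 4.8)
The plan is to combine the generic degree bound of Lemma~\ref{lem:degbound} with a case analysis that produces the complementary bound $q-d+1$. Since $f-ax-b$ has degree $d$ for every choice of $(a,b)$, Lemma~\ref{lem:degbound} already gives at most $d$ roots; so it suffices to show that every line contributes at most $q-d+1$ intersection points, and the stated $\min$ will follow by pairing the two uniform bounds. The number of intersection points of $y=x^d$ with $y=ax+b$ is the number of distinct roots of $g(x)=x^d-ax-b$ in $\F_q$, so I would bound this root count in each of the three regimes (i)--(iii), separating out the root $x=0$ (which occurs precisely when $b=0$).

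The device I would use is to exploit that every nonzero root lies in $\F_q^*$ and hence satisfies $r^{q-1}=1$. If $r\neq 0$ is a root of $g$, then $r^d=ar+b$; multiplying by $r^{q-1-d}$ (legitimate since $r\neq 0$ and $q-1-d\geq 0$) and substituting $r^{q-1}=1$ yields
\[
a r^{q-d}+b r^{q-1-d}=1 .
\]
Thus every nonzero root of $g$ is a root of the auxiliary polynomial $k(x)=a x^{q-d}+b x^{q-1-d}-1$, whose degree collapses to $q-d$ when $a\neq 0$ and to $q-1-d$ when $a=0,\,b\neq 0$. Reading off the three cases: in case (ii) $x=0$ is not a root (as $b\neq 0$), so the root count is at most $\deg k=q-d$; in case (iii) it is at most $q-1-d=q-d-1$; and in case (i) the nonzero roots number at most $q-d$, to which we add the single root $x=0$, giving $q-d+1$. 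Taking the largest of these bounds, together with Lemma~\ref{lem:degbound}, delivers $\mathrm{deg}(S_f)\leq\min(d,q-d+1)$.

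The genuine obstacle is case (ii): a trinomial $x^d-ax-b$ with $ab\neq 0$ has no cyclotomic structure, so the clean counts available for $x^{d-1}=a$ (case (i)) and $x^d=b$ (case (iii))---where the fibre of a power map has size at most $\gcd(d-1,q-1)$ or $\gcd(d,q-1)$, each bounded by $q-d$ and $q-d-1$ respectively via the elementary estimate $\gcd(m,q-1)\leq q-1-m$---do not apply. The degree-complementation identity above is precisely what circumvents this, reducing case (ii) to a bare degree count for $k$. I would flag the two points that make the argument go through: the hypothesis $d<q-1$ guarantees $q-1-d\geq 0$ so that the multiplication by $r^{q-1-d}$ is a genuine polynomial manipulation, and the leading coefficient of $k$ is $a$ (respectively $b$), so $\deg k$ is exactly $q-d$ (respectively $q-1-d$) rather than something smaller. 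With these checks in place the three cases, and hence the theorem, follow uniformly from this single observation.
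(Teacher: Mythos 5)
Your proposal is correct and follows essentially the same route as the paper: the bound $\deg(S_f)\leq d$ comes from Lemma~\ref{lem:degbound}, the root $x=0$ is split off according to whether $b=0$, and every nonzero root is transferred (via $r^{q-1}=1$, i.e.\ multiplication by $x^{q-1-d}$) to a root of the complementary-degree polynomial $ax^{q-d}+bx^{q-1-d}-1$, whose degree is then read off in each of the three cases. The paper phrases this as an equality of nonzero root sets of $x^d-ax-b$ and $-ax^{q-d}-bx^{q-d-1}+1$, but the counting is identical to yours.
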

\begin{proof}
Since $f=x^d$ has degree $d>2$, applying the above lemma shows that $\deg(S_{x^d}) \leq d$.  Next, we note that $x=0$ is a root of $x^d-ax-b$ when $b=0$, and not when $b \neq 0$.

Now, consider non-zero roots.  As $\alpha^{q-1}=1$ for all $\alpha \in \F_q^*$, the number of non-zero roots of $x^d-ax-b$ is the same as the number of non-zero roots of
$$x^{q-1-d}(x^d-ax-b)=1-ax^{q-d}-bx^{q-1-d}=-ax^{q-d}-bx^{q-d-1}+1$$
where $q-1-d>0$.  This polynomial has at most $q-d$ roots if $a \neq 0$, at most $q-d-1$ roots if $a=0$ and $b \neq 0$, and $0$ roots otherwise.  There is at most one further root ($x=0$).  The result now follows.
\end{proof}

We prove a useful corollary of Theorem \ref{thm:ubound}.

\begin{corollary}\label{cor:q-p^i}
Let $q=p^s$ ($s>1$) and let $f=x^{q-p^i}$ where $i\mid s$, $i<s$.  Then $\mathrm{deg}(S_f) \leq p^i$.
\end{corollary}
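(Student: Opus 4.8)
The plan is to invoke Theorem~\ref{thm:ubound} with $d=q-p^i$ and then to sharpen, by hand, the bound it supplies in the case $a\neq 0,\,b=0$, which on its own comes out just one too large. Throughout, note that since $i\mid s$ with $1\le i<s$ we have $p^i\ge 2$ and $p^i\le p^{s-1}=q/p$, so that $d=q-p^i\ge q-q/p\ge 2$.

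First I would check the hypotheses of Theorem~\ref{thm:ubound}, namely $2<d<q-1$. The upper inequality $q-p^i<q-1$ is immediate from $p^i\ge 2$. For the lower inequality, a short computation with $p^i\le q/p$ shows $d>2$ in every case except $q=4$ (which forces $p=2$, $s=2$, $i=1$, and $d=2$); this single exceptional case I would dispose of directly using Lemma~\ref{lem:degbound}, which gives $\deg(S_f)\le d=2=p^i$.

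For all remaining cases Theorem~\ref{thm:ubound} applies, and I would translate its three sub-bounds into the present notation: a line $y=ax+b$ meets $y=x^{q-p^i}$ in at most $q-d+1=p^i+1$ points when $a\neq 0,\,b=0$; at most $q-d=p^i$ points when $a,b\neq 0$; and at most $q-d-1=p^i-1$ points when $a=0,\,b\neq 0$ (the case $a=b=0$ yielding the single root $x=0$). The last three bounds are already at most $p^i$, so the only obstacle is the case $a\neq 0,\,b=0$, where the raw bound $p^i+1$ exceeds what we want.

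The crux is therefore to improve the case $a\neq 0,\,b=0$ directly. Here $x^{q-p^i}-ax=x\bigl(x^{\,q-p^i-1}-a\bigr)$, so besides the root $x=0$ the nonzero roots are the solutions of $x^{\,q-p^i-1}=a$, of which there are $\gcd(q-p^i-1,\,q-1)$ (or none). Since this gcd divides $(q-1)-(q-p^i-1)=p^i$ and also divides $q-1$, and since $\gcd(p,q-1)=1$, it equals $1$; hence this case contributes at most $2\le p^i$ roots. Taking the maximum over all $(a,b)$ then gives $\deg(S_f)\le p^i$, as required. The only genuinely new input beyond Theorem~\ref{thm:ubound} is this gcd computation, which is precisely what removes the spurious $+1$.
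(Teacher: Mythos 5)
Your proof is correct and follows essentially the same route as the paper's: both apply Theorem~\ref{thm:ubound}, observe that only the case $a\neq 0$, $b=0$ threatens the bound $p^i$, and then show that this case in fact yields at most two distinct roots. The remaining differences are cosmetic: the paper notes the nonzero roots there satisfy $x^{p^i}=a^{-1}$ and that $x^{p^i}-a^{-1}=\bigl(x-r^{p^{s-i}}\bigr)^{p^i}$ is a perfect $p^i$-th power with a single root, while you factor $x^{q-p^i}-ax=x\bigl(x^{q-p^i-1}-a\bigr)$ and count solutions via $\gcd(q-p^i-1,q-1)=1$ --- two faces of the same underlying fact $\gcd(p^i,q-1)=1$; you are also slightly more careful in checking the hypothesis $2<d$ of Theorem~\ref{thm:ubound}, which fails exactly when $q=4$, a case you dispatch with Lemma~\ref{lem:degbound} and the paper absorbs into its $\min$ computation.
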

\begin{proof}
Let $d=q-p^i$ and $f=x^d$.  By Theorem \ref{thm:ubound}, an upper bound for $\mathrm{deg}(S_f)$ is given by $\mathrm{min}(p^s-p^i,p^i+1)$. In the special case $p=s=2$ and $i=1$, we have $p^s-p^i = 2 =p^i$. Otherwise, we have $\mathrm{min}(p^s-p^i,p^i+1)=p^i+1$. We show this is not attainable.

Consider $x^{q-p^i}-ax-b=0$, where $a,b \in \F_q$. Here $x=0$ is a root only when $b=0$. If $a=b=0$, it is the sole root, so assume $a,b$ are not both $0$.
 
Now consider non-zero roots. From above, $x^{q-p^i}-ax-b$ has the same non-zero roots over $\F_q$ as $-ax^{p^i}-bx^{p^i-1}+1$.  This polynomial has at most $p^i$ roots, with the possible maximum value of $p^i+1$ for $\mathrm{deg}(S_f)$ achievable only when $b=0$ and $a \neq 0$ (so that $0$ is also a root).  To achieve this upper bound, we would require the polynomial to have $p^i$ distinct non-zero roots in this case. The equation becomes $ax^{p^i}-1=0$, i.e. $x^{p^i}-a^{-1}=0$.  But $a^{-1}=r \in \F_q^*$ and so $r=r^q=(r^{p^{s-i}})^{p^i}$, i.e. $x^{p^i}-a^{-1}=(x-r^{p^{s-i}})^{p^i}$.  Hence, we do not get $p^i$ distinct roots in this case, so the possible maximum $p^i+1$ cannot be achieved.  
\end{proof}

Next, we separately analyse the three types of lines whose intersections contribute to the intersection distribution of $f=x^d$, and the resulting polynomials.

We first demonstrate that intersections of $y=x^d$ with lines $y=b$ (``horizontal" lines), and $y=ax$ (lines through the origin) can be described completely, using the cyclotomic structure of $\F_q$.

Recall the following properties of polynomials over finite fields (see for example \cite{LidNie}).

\begin{lemma}\label{lem:roots}
Let $q$ be a prime power.  Then for polynomials over $\F_q$,
\begin{itemize}
\item[(i)] the distinct roots in $\F_q$ of a polynomial $f \in \F_q[x]$  are precisely those of $\mathrm{gcd}(f, x^q-x)$ (respectively $\mathrm{gcd}(f, x^{q-1}-1)$ for the non-zero roots of $f$). The number of such roots is given by the degree of $\mathrm{gcd}(f, x^q-x)$ (respectively $\mathrm{gcd}(f, x^{q-1}-1)$).
\item[(ii)] For $1 \leq a,b \leq q-1$, $gcd(x^a-1,x^b-1)=x^{\mathrm{gcd}(a,b)}-1$.
\end{itemize}
\end{lemma}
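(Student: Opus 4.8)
The plan is to establish the two parts separately, since each rests on a standard structural fact about the splitting of $x^q-x$ and the behaviour of binomials $x^n-1$ under the Euclidean algorithm.

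For part (i), the key observation is that $x^q-x$ splits into distinct linear factors over $\F_q$. Every $\alpha\in\F_q$ satisfies $\alpha^q=\alpha$ (trivially for $\alpha=0$, and because $\F_q^*$ has order $q-1$ for $\alpha\neq 0$), so each of the $q$ elements of $\F_q$ is a root; as the polynomial has degree $q$ and is monic, $x^q-x=\prod_{\alpha\in\F_q}(x-\alpha)$, which is squarefree. Now $\alpha\in\F_q$ is a root of $f$ exactly when $(x-\alpha)\mid f$, and since $(x-\alpha)$ always divides $x^q-x$, such $\alpha$ are precisely the roots of $\gcd(f,x^q-x)$. Because $x^q-x$ is squarefree, this gcd is a product of \emph{distinct} factors $(x-\alpha)$, one for each root of $f$ in $\F_q$, so its degree counts exactly these roots. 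The non-zero case is identical, replacing $x^q-x$ by $x^{q-1}-1=\prod_{\alpha\in\F_q^*}(x-\alpha)$.

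For part (ii), I would transport the integer Euclidean algorithm to the exponents. The engine is a division identity: if $a=bk+r$ with $0\le r<b$, then
\[
x^a-1 = x^r(x^{bk}-1)+(x^r-1),
\]
and since $x^b-1\mid x^{bk}-1$ (immediate from $x^{bk}-1=(x^b-1)(x^{b(k-1)}+\cdots+1)$), we get $x^a-1\equiv x^r-1\pmod{x^b-1}$, hence
\[
\gcd(x^a-1,\,x^b-1)=\gcd(x^b-1,\,x^r-1).
\]
This is precisely the reduction $\gcd(a,b)=\gcd(b,r)$ lifted to binomials. Iterating the integer algorithm on $(a,b)$ therefore produces the same chain of gcd computations for the polynomials and terminates when some exponent divides the previous one; writing $g=\gcd(a,b)$, the terminal step uses $x^g-1\mid x^a-1$ (base case $b\mid a$), giving $\gcd(x^a-1,x^b-1)=x^{\gcd(a,b)}-1$ once gcds are normalised to be monic, which is consistent since $x^g-1$ is monic.

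The main obstacle is minor: one must confirm the divisibility $x^b-1\mid x^{bk}-1$ and verify the base case of the reduction (so that the iteration genuinely mirrors the integer Euclidean algorithm and terminates at $x^{\gcd(a,b)}-1$). Both parts are otherwise routine consequences of the listed facts about finite fields and polynomial divisibility.
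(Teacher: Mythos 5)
Your proof is correct. Note that the paper does not actually prove this lemma at all: it is stated as a recalled standard fact with a citation to Lidl and Niederreiter, so there is no in-paper argument to compare against. What you have written is a complete, self-contained version of the standard textbook proof: part (i) rests on the factorisation $x^q-x=\prod_{\alpha\in\F_q}(x-\alpha)$ (squarefree, splitting completely over $\F_q$), and part (ii) lifts the integer Euclidean algorithm to binomials via the identity $x^a-1=x^r(x^{bk}-1)+(x^r-1)$ together with $x^b-1\mid x^{bk}-1$. Both steps are sound, the base case and termination of the iteration in (ii) are handled correctly (when $r=0$ the gcd chain stops at $x^b-1$, matching $\gcd(a,b)=b$), and the degree count in (i) legitimately uses squarefreeness of $x^q-x$ so that $\gcd(f,x^q-x)$ is exactly the product of the distinct linear factors corresponding to roots of $f$ in $\F_q$. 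Your argument in (ii) is in fact more general than needed, since it works over any field and for arbitrary positive exponents, not just $1\leq a,b\leq q-1$; nothing in the paper's later use of the lemma is affected by this.
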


We will also need the following definition:
\begin{definition}
Let $\alpha$ be a primitive element of $\F_q$ and let $e \mid q-1$. The cyclotomic classes of order $e$ in $\F_q$ are defined to be
$$ C_i^e=\alpha^i \langle \alpha^e \rangle$$
where $0 \leq i \leq e-1$.  The class $C_0^e$ is a multiplicative subgroup of $\F_q^*$.  Each cyclotomic class has cardinality $(q-1)/e$ and the $e$ cyclotomic classes of order $e$ partition $\F_q^*$.
\end{definition}

We can now describe the situation for the intersection of monomials with horizontal lines.

\begin{theorem}\label{thm:(d,q-1)}
Let $f=x^d$ and let $m=\mathrm{gcd}(d,q-1)$. Let $\alpha$ be a primitive element of $\F_q$.  Then 
\begin{itemize}
\item[(i)] $y=f(x)$ has $m$ points of intersection with each line of the form $y=\alpha^{id}$, where $0 \leq i \leq (q-1)/m$;
\item[(ii)] $y=f(x)$ has no points of intersection with any other horizontal line of the form $y=b$, $b \in \F_q$.
\end{itemize}
\end{theorem}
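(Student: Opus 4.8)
The plan is to recast the geometric statement multiplicatively. The points where $y=f(x)$ meets the horizontal line $y=b$ correspond exactly to the solutions in $\F_q$ of $x^d=b$, so the whole theorem amounts to counting these solutions as $b$ runs over $\F_q$. First I would separate off $b=0$: here $x^d=0$ has the sole solution $x=0$, giving the single intersection point $(0,0)$ on the line $y=0$. For every other horizontal line I may assume $b\in\F_q^*$, and then any solution $x$ is automatically nonzero, so I am counting solutions inside the cyclic group $\F_q^*$ of order $q-1$.

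Next I would analyse the group homomorphism $\psi\colon\F_q^*\to\F_q^*$ given by $x\mapsto x^d$. Its kernel is the set of $d$-th roots of unity in $\F_q$, which are the common roots of $x^d-1$ and $x^{q-1}-1$; by Lemma~\ref{lem:roots}(ii) these coincide with the roots of $x^{\gcd(d,q-1)}-1=x^m-1$. Since $x^m-1$ divides $x^{q-1}-1$, it has $m$ distinct roots in $\F_q$, so $|\ker\psi|=m$ and the image of $\psi$ is the unique subgroup of $\F_q^*$ of order $(q-1)/m$, namely the cyclotomic class $C_0^m=\langle\alpha^m\rangle$. Every fibre over a point of the image is a coset of $\ker\psi$ and hence has exactly $m$ elements, while $\psi^{-1}(b)=\emptyset$ for $b\in\F_q^*$ outside the image. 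This already establishes the dichotomy ``$m$ or $0$ solutions'' for nonzero $b$.

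Finally I would match the image to the lines named in (i). As $\alpha$ is primitive, $\alpha^d$ has order $(q-1)/m$, so the image $\langle\alpha^d\rangle=\langle\alpha^m\rangle$ consists precisely of the powers $\alpha^{id}$; letting $i$ run over $0\le i\le (q-1)/m$ enumerates all of them, with the endpoints $i=0$ and $i=(q-1)/m$ both returning $\alpha^{(q-1)d/m}=(\alpha^{q-1})^{d/m}=1$. Hence each such line meets the graph in exactly $m$ points, which is (i), and every remaining nonzero $b$ gives an external horizontal line, which is (ii). I do not expect a serious obstacle here: the conclusion is essentially forced by the cyclic structure of $\F_q^*$ together with the kernel count supplied by Lemma~\ref{lem:roots}. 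The only points requiring care are the off-by-one in the index range $0\le i\le(q-1)/m$ (whose two endpoints name the same line) and the isolated behaviour of the line $y=0$, which I would record as the single exceptional intersection at the origin.
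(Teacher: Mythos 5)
Your proof is correct and is essentially the paper's own argument in different language: both rest on Lemma~\ref{lem:roots}(ii) to identify the nonzero solutions of $x^d=b$ as cosets of the group of $m$-th roots of unity, your kernel--image analysis of $\psi\colon x\mapsto x^d$ being precisely the paper's decomposition of $\F_q^*$ into the cyclotomic classes $C_i^{(q-1)/m}$ (the cosets of $\ker\psi$), with your image $\langle\alpha^d\rangle$ matching the paper's list of lines $y=\alpha^{id}$. The only difference is presentational: the paper verifies the common value $f(\alpha^{ku+i})=\alpha^{di}$ on each class by direct exponent computation rather than invoking the homomorphism formalism, and it likewise flags $(0,0)$ on the line $y=0$ as the single graph point missed by the lines in (i).
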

\begin{proof}
By Lemma \ref{lem:roots}, the distinct roots of $x^d-1$ over $\F_q$ are precisely the roots of
\[
    \mathrm{gcd}(x^d-1,x^{q-1}-1)=x^{\mathrm{gcd}(d,q-1)}-1=x^m-1.
\]
By definition, $m \mid d$ and $m \mid q-1$; let $q-1=um$ and $d=vm$ for integers $u,v$. The points of intersection of $y=f(x)$ and $y=1$ are the $m$ roots of $x^m-1$ in $\F_q^*$, i.e. the elements of the cyclotomic class $C_0^{(q-1)/m}=C_0^u$.

Recall that the cyclotomic classes of order $u$ partition the non-zero elements of $\F_q$.  Consider the elements of $C_i^u$ ($0 \leq i \leq u-1$).  For $\alpha^{ku+i} \in C_i^u$, we have
\[
    f(\alpha^{ku+i})=\alpha^{dku} \alpha^{di}=\alpha^{vmku}\alpha^{di}=\alpha^{di}=f(\alpha^i).
\]
So there are $u$ parallel affine lines $y=\alpha^{id}$ $(0 \leq i \leq u-1)$ which each intersect in precisely $m$ points with the graph of $f$. These lines partition all affine points on the graph except for the point $(0,0)$.
\end{proof}

\begin{example}
Let $q=13$, and take $\alpha=2$.
\begin{itemize}
\item[(i)] Let $d=4$, so $d \mid q-1$, $m=\mathrm{gcd}(d,q-1)=d$ and $f=x^4$.  The three cyclotomic classes of order $3$ in $\F_{13}$ are
\[
    C_0^3=\{1,5,8,12\},\;C_1^3=\{2,3,10,11\},\; C_2^3=\{4,6,7,9\}.
\]
The points $(x,f(x))$ with
\begin{itemize}
    \item[$\bullet$]{
        $x \in C_0^3$ are $\{(1,1), (5,1), (8,1), (12,1)\}$ (all lie on the line $y=1$),
    }
    \item[$\bullet$]{
        $x \in C_1^3$ are $\{(2,3), (3,3), (10,3), (11,3)\}$ (all lie on the line  $y=3$),
    }
    \item[$\bullet$]{
        $x \in C_2^3$ are $\{(4,9), (6,9), (7,9), (9,9)\}$ (all lie on the line $y=9$).
    }
\end{itemize}
Observe that the set of $y$-coordinates $\{1,3,9\}$ is precisely the set $C_0^{(q-1)/d}=C_0^4$ in $\F_{13}$.  
\item[(ii)] Let $d=8$; here $m=\mathrm{gcd}(d,q-1)=\mathrm{gcd}(8,12)=4$.  The distinct roots of $x^8-1$ in $\F_{13}$ are the distinct roots of $x^4-1$, namely the elements of $C_0^3$. So the points $(x,x^8)$ with
\begin{itemize}
    \item[$\bullet$]{
        $x \in C_0^3$ are $\{(1,1), (5,1), (8,1), (12,1)\}$ (line $y=1$),
    }
    \item[$\bullet$]{
        $x \in C_1^3$ are $\{(2,9), (3,9),(10,9),(11,9)\}$ (line $y=9$),
    }
    \item[$\bullet$]{
        $x \in C_2^3$ are $\{(4,3), (6,3), (7,3), (9,3)\}$ (line $y=3$).
    }
\end{itemize}
Again, the set of $y$-coordinates is $\{1,3,9\}$.
\end{itemize}
\end{example}

The next corollary gives sufficient conditons for the horizontal lines to achieve $\mathrm{deg}(S_f)$.

\begin{corollary}\label{cor:(d,q-1)}
Let $f=x^d$ and let $m=\mathrm{gcd}(d,q-1)$. If $m(m-1) \geq q-1$ then $\mathrm{deg}(S_f)=m$.
\end{corollary}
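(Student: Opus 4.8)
The plan is to prove the two inequalities $\deg(S_f) \geq m$ and $\deg(S_f) \leq m$ separately. The lower bound is immediate from Theorem~\ref{thm:(d,q-1)}: taking $i=0$, the horizontal line $y = 1 = \alpha^{0}$ meets the graph of $f$ in exactly $m$ points, so $v_m(f) \geq 1$ and hence $\deg(S_f) \geq m$. The content of the corollary therefore lies entirely in the matching upper bound, namely that once $m(m-1) \geq q-1$ no non-vertical line $y=ax+b$ meets $y = x^d$ in more than $m$ points.

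To bound these intersections I would split the lines according to whether $a=0$. For horizontal lines ($a=0$) Theorem~\ref{thm:(d,q-1)} already gives at most $m$ intersection points (and the degenerate line $y=0$ meets the graph only at $x=0$), so these never exceed $m$; the crux is therefore the lines with $a \neq 0$. Here the key observation is that the map $x \mapsto x^d$ is \emph{injective} on the root set of $x^d - ax - b$: if $r$ and $s$ are roots with $r^d = s^d$, then $ar+b = as+b$, forcing $r=s$ since $a \neq 0$. Thus distinct roots have distinct $d$-th powers. At most one root equals $0$, and every nonzero root $r$ has $r^d$ lying in the image of the $d$-th power map on $\F_q^*$, which is the cyclotomic subgroup $C_0^m = \langle \alpha^m \rangle$ of order $(q-1)/m$ (since $\langle\alpha^d\rangle=\langle\alpha^{\gcd(d,q-1)}\rangle$). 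By injectivity the number of nonzero roots is at most $|C_0^m| = (q-1)/m$, so the total number of roots is at most $(q-1)/m + 1$.

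Finally I would feed in the hypothesis: dividing by $m>0$, the condition $m(m-1) \geq q-1$ is equivalent to $(q-1)/m \leq m-1$, whence $(q-1)/m + 1 \leq m$. Combined with the horizontal case, this shows every non-vertical line meets the graph in at most $m$ points, so $\deg(S_f) \leq m$ and equality follows. I do not expect a serious obstacle: the argument is direct, and the only points needing care are identifying the image of $x\mapsto x^d$ as the order-$(q-1)/m$ cyclotomic class $C_0^m$ (via Lemma~\ref{lem:roots} and the cyclotomic setup) and correctly accounting for the single possible root $x=0$. The genuinely load-bearing step is the injectivity of $x\mapsto x^d$ on the roots when $a\neq 0$, which is precisely what converts the hypothesis $m(m-1)\geq q-1$ into the sharp bound $m$.
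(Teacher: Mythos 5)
Your proof is correct and is essentially the paper's own argument: the paper phrases the key step geometrically --- any affine line other than the $(q-1)/m$ horizontal lines $y=\alpha^{id}$ of Theorem~\ref{thm:(d,q-1)} contains at most one graph point from each of them, plus possibly $(0,0)$, giving the same bound $1+(q-1)/m$ --- and your injectivity of $x\mapsto x^d$ on the root set when $a\neq 0$ is precisely the algebraic translation of that one-point-per-horizontal-line fact. Both proofs then invoke the hypothesis $m(m-1)\geq q-1$ in the identical way, namely as $(q-1)/m+1\leq m$, to conclude $\deg(S_f)=m$.
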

\begin{proof}
By Theorem \ref{thm:(d,q-1)}, the non-zero affine points of the graph of $f$ are partitioned into $(q-1)/m$ parallel lines (each intersecting the graph in $m$ points); any other affine line distinct from these can contain at most one point from each (and possibly the point $(0,0)$), giving a maximum number of $1+(q-1)/m$ intersection points. If $m(m-1) \geq q-1$, then $m \geq 1+(q-1)/m$; in this case $m$ is the maximum number of intersection points on any line and so $\mathrm{deg}(S_f)=m$.
\end{proof}

\begin{example}
Let $q=25$ and $d=16$; let $f=x^{16}$.  Theorem \ref{thm:ubound} shows $\mathrm{deg}(S_f) \leq 10$, but in fact $\mathrm{deg}(S_f)=8$ by Corollary \ref{cor:(d,q-1)}, since $m=\mathrm{gcd}(8,24)=8$ and $m(m-1)=56>24=q-1$.  Let $\alpha$ be a primitive element of $\F_{25}$.  The $24$ non-zero affine points of $S_f$ may be partitioned into three sets, each containing $8$ points (those with $x \in C_i^3$, $0 \leq i \leq 2$), corresponding to intersections with the lines $y=1$, $y=\alpha^8$ and $y=\alpha^{16}$ (i.e. all $y=c$ where $c \in C_0^3$).  For example, the points $(x,f(x))$ on $y=1$ are 
\[
    \{(1,1), (\alpha^3,1), (\alpha^6,1), (\alpha^9,1), (\alpha^{12},1), (\alpha^{15},1), (\alpha^{18},1), (\alpha^{21},1)\}.
\]
Any other line intersecting the graph of $f$ can have at most one point of intersection with each of these three lines, plus possibly $(0,0)$, i.e. at most $4\;(<8)$ points.
\end{example}

Similarly, we can describe the situation for lines through the origin.

\begin{theorem}\label{thm:(d-1,q-1)}
Let $f=x^d$.  Let $m=\mathrm{gcd}(d-1,q-1)$. Let $\alpha$ be a primitive element of $\F_q$. Then
\begin{itemize}
\item[(i)] $y=f(x)$ has $m+1$ points of intersection with each line of the form $y=\alpha^{(d-1)i}x$, where $0 \leq i \leq (q-1)/m$;
\item[(ii)] $y=f(x)$ has no points of intersection with any other line of the form $y=ax$, $a \in \F_q^*$.
\end{itemize}
\end{theorem}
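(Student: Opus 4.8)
The plan is to reduce the problem to counting solutions of $x^{d-1}=a$ and then to exploit the cyclotomic structure of $\F_q^*$ exactly as in the proof of Theorem~\ref{thm:(d,q-1)}. A point $(x,x^d)$ lies on the line $y=ax$ precisely when $x^d=ax$, i.e.\ when $x(x^{d-1}-a)=0$. The root $x=0$ gives the origin $(0,0)$, which lies on \emph{every} line $y=ax$; hence the origin is always one common point, and the substantive content of the statement is the number of \emph{non-zero} solutions, namely the roots of $x^{d-1}=a$ in $\F_q^*$.

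First I would study the homomorphism $\phi\colon\F_q^*\to\F_q^*$, $\phi(x)=x^{d-1}$, on the cyclic group $\F_q^*$ of order $q-1$. Its kernel consists of the solutions of $x^{d-1}=1$; by Lemma~\ref{lem:roots} these are the roots of $\gcd(x^{d-1}-1,x^{q-1}-1)=x^{\gcd(d-1,q-1)}-1=x^m-1$, i.e.\ the unique subgroup $C_0^{(q-1)/m}$ of order $m$. Consequently every non-empty fibre of $\phi$ has exactly $m$ elements, and the image of $\phi$ is a subgroup of order $(q-1)/m$. Since $\langle\alpha^{d-1}\rangle=\langle\alpha^{\gcd(d-1,q-1)}\rangle=\langle\alpha^{m}\rangle=C_0^{m}$, this image is exactly the cyclotomic class $C_0^{m}$.

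For part~(i), a slope of the form $a=\alpha^{(d-1)i}$ lies in $\langle\alpha^{d-1}\rangle=C_0^{m}$, so $x^{d-1}=a$ has precisely $m$ non-zero solutions; adjoining the origin yields $m+1$ intersection points. As $i$ ranges over $0\le i\le (q-1)/m-1$, the slopes $\alpha^{(d-1)i}$ run once through all $(q-1)/m$ elements of $C_0^m$ (the values $i=0$ and $i=(q-1)/m$ coincide, since $\alpha^{d-1}$ has order $(q-1)/m$). Equivalently, mirroring Theorem~\ref{thm:(d,q-1)}, I would partition the $q-1$ non-zero affine points of the graph according to the slope $x^{d-1}$ of the line joining $(x,x^d)$ to the origin: two such points share a line through the origin iff the ratio of their $x$-coordinates lies in the kernel $C_0^{(q-1)/m}$, giving $(q-1)/m$ classes each of size $m$. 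For part~(ii), if $a\in\F_q^*\setminus C_0^{m}$ then $a$ is not in the image of $\phi$, so $x^{d-1}=a$ has no solution and the line $y=ax$ meets the graph only at the origin.

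I expect no genuine obstacle here: this is a direct cyclotomic count paralleling Theorem~\ref{thm:(d,q-1)}. The only points needing care are the bookkeeping of the origin --- which supplies the extra $+1$ in~(i) and is the sole common point in~(ii), so that ``no points of intersection'' in~(ii) should be read as ``no non-zero points of intersection'' --- and the verification that the listed slopes $\alpha^{(d-1)i}$ enumerate $C_0^m$ exactly once, which rests solely on $\alpha^{d-1}$ having order $(q-1)/m$.
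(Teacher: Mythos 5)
Your proof is correct and takes essentially the same route as the paper's: both rest on Lemma~\ref{lem:roots} to identify the solutions of $x^{d-1}=1$ as the order-$m$ subgroup $C_0^{(q-1)/m}$, and then distribute the non-zero graph points over the lines $y=\alpha^{(d-1)i}x$ --- your kernel/image/fibre phrasing of the map $x\mapsto x^{d-1}$ is just a repackaging of the paper's explicit computation $f(\alpha^{ku+i})=\alpha^{ku+i}\alpha^{(d-1)i}$ on cyclotomic classes. Your side remark that part~(ii) must be read as ``no non-zero points of intersection'' (since the origin $(0,0)$ lies on the graph and on every line $y=ax$) is a correct and careful observation that the paper leaves implicit.
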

\begin{proof}
By Lemma \ref{lem:roots} , the distinct roots in $\F_q$ of $x^d-x$ are precisely those of
\[
    \mathrm{gcd}(x^{d}-x,x^q-x)=x(\mathrm{gcd}(x^{d-1}-1,x^{q-1}-1))=x(x^{\mathrm{gcd}(d-1,q-1)}-1)=x(x^m-1).
\]
By definition, $m \mid d-1$ and $m \mid q-1$; let $q-1=um$ and $d-1=vm$ for integers $u,v$.  The points of intersection of $y=f(x)$ and $y=x$ are the point $(0,0)$ together with the $m$ roots of $x^m-1$ in $\F_q^*$, i.e. the elements of the cyclotomic class $C_0^{(q-1)/m}=C_0^u$.  Recall that the cyclotomic classes of order $u$ partition the non-zero elements of $\F_q$. Consider the elements of $C_i^u$ ($0 \leq i \leq u-1$). For $\alpha^{ku+i} \in C_i^u$, we have
\[
    f(\alpha^{ku+i})=\alpha^{dku} \alpha^{di}=\alpha^{(vm+1)ku} \alpha^{di}=\alpha^{ku}\alpha^{di}=\alpha^{ku+i}\alpha^{(d-1)i}.
\]
So the $m$ elements of $C_i^u$ together with $0$ are the roots of $x^d-\alpha^{(d-1)i} x$.  For each $0 \leq i \leq u-1$, this gives a set of $u$ lines $y=\alpha^{(d-1)i} x$, each of which intersects in $m+1$ points with the graph of $f$, and which have only the point $(0,0)$ in common. This accounts for all affine points of the graph.
\end{proof}

\begin{example}
Let $q=13$ and $\alpha=2$. Let $d=5$ and $f=x^5$. Then
\begin{itemize}
    \item{
        the points $\{(0,0),(1,1), (5,1), (8,1), (12,1)\}$ correspond to the roots of $f-x=x(x^4-1)$,
    }
    \item{
        the points $\{(0,0),(2,3), (3,3), (10,3), (11,3)\}$ correspond to roots of $f-3x=x(x^4-3)$,
    }
    \item{
        the points $\{(0,0),(4,9), (6,9), (7,9), (9,9)\}$ correspond to roots of $f-9x=x(x^4-9)$.
    }
\end{itemize}
\end{example}

\begin{corollary}\label{cor:(d-1,q-1)}
Let $f=x^d$ and let $m=\mathrm{gcd}(d-1,q-1)$. If $m(m+1) \geq q-1$ then $\mathrm{deg}(S_f)=m+1$.
\end{corollary}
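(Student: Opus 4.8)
The plan is to mirror the proof of Corollary~\ref{cor:(d,q-1)}, using Theorem~\ref{thm:(d-1,q-1)} in place of Theorem~\ref{thm:(d,q-1)}. By Theorem~\ref{thm:(d-1,q-1)}, the affine points of the graph of $f=x^d$ are partitioned into $(q-1)/m$ lines through the origin, each meeting the graph in $m+1$ points and sharing only the point $(0,0)$. First I would observe that these $(q-1)/m$ lines already realise $m+1$ as an intersection size, so $\mathrm{deg}(S_f)\geq m+1$. The content of the corollary is therefore the reverse inequality: no line can do better than $m+1$ under the stated hypothesis.

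Next I would bound the intersection size of an arbitrary affine line $\ell$ with the graph. There are two cases depending on whether $\ell$ passes through the origin. If $\ell$ is one of the $(q-1)/m$ special lines through the origin, it meets the graph in exactly $m+1$ points. Any other line through the origin meets the graph only in $(0,0)$ by part~(ii) of Theorem~\ref{thm:(d-1,q-1)}, contributing just one point. If $\ell$ does not pass through the origin, then $\ell$ can share at most one point with each of the $(q-1)/m$ origin-lines, since two distinct lines meet in at most one point of $\pg(2,q)$; hence $\ell$ meets the graph in at most $(q-1)/m$ points (it cannot contain $(0,0)$). So the maximum intersection size over all lines is $\max\bigl(m+1,(q-1)/m\bigr)$.

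It then remains to convert the hypothesis $m(m+1)\geq q-1$ into the statement that the first term dominates. Dividing by $m$ gives $m+1\geq (q-1)/m$, so $\max\bigl(m+1,(q-1)/m\bigr)=m+1$, and therefore $\mathrm{deg}(S_f)=m+1$ exactly. I do not anticipate a genuine obstacle here: the argument is a clean counting/pigeonhole estimate once the partition from Theorem~\ref{thm:(d-1,q-1)} is in hand. The one point to state carefully is the case split for lines not through the origin versus the non-special lines through the origin, to be sure that the ``$+1$'' from the point $(0,0)$ is accounted for correctly and is \emph{not} available to a line avoiding the origin; this parallels the role of $(0,0)$ in the proof of Corollary~\ref{cor:(d,q-1)}, but with the shift from $1+(q-1)/m$ there to $(q-1)/m$ here, reflecting that the point $(0,0)$ now already lies on each of the partitioning lines rather than being a separate point to be added.
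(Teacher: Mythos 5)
Your proof is correct and takes essentially the same approach as the paper: the paper's own (very terse) proof likewise observes that any line distinct from the $(q-1)/m$ origin-lines of Theorem~\ref{thm:(d-1,q-1)} contains at most one point of each, and that $m(m+1)\geq q-1$ yields $m+1\geq (q-1)/m$. Your explicit case split (special origin-lines, other origin-lines, lines avoiding the origin) simply fills in details, including the careful accounting of $(0,0)$, that the paper leaves implicit.
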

\begin{proof}
Any line distinct from the $(q-1)/m$ lines (each containing $m+1$ points) described in Theorem \ref{thm:(d-1,q-1)} can contain at most one point from each, and if $m(m+1) \geq q-1$ then $m+1 \geq (q-1)/m$.
\end{proof}

\begin{example}
Let $q=31$, $d=26$ and $f=x^{26}$. Then
\[
    \mathrm{gcd}(d,q-1)=\mathrm{gcd}(26,30)=2\; \text{ and } \;\mathrm{gcd}(d-1,q-1)=\mathrm{gcd}(25,30)=5,
\]
so we use Corollary \ref{cor:(d-1,q-1)}.  Here, $5(5+1)=30=q-1$, so $\mathrm{deg}(S_f)=6$.

Let $\alpha$ be a primitive element of $\F_{31}$. The six lines of intersection, each containing six points of the graph (with $(0,0)$ in common to all lines) are $y=x$, $y=\alpha^{5} x$, $y=\alpha^{10}x$, $y=\alpha^{15}x$, $y=\alpha^{20}x$ and $y=\alpha^{25}x$.
\end{example}

A direct consequence of Theorems \ref{thm:(d,q-1)} and \ref{thm:(d-1,q-1)} is the following lower bound result.

\begin{corollary}\label{cor:lbound}
Let $2 \leq d \leq q-1$ and let $f=x^d$. Then
\begin{itemize}
\item[(i)] $\mathrm{deg}(S_f) \geq \mathrm{gcd}(d,q-1)$;
\item[(ii)] $\mathrm{deg}(S_f) \geq \mathrm{gcd}(d-1,q-1)+1$;
\end{itemize}
i.e. $\mathrm{deg}(S_f) \geq \mathrm{max}\big(\mathrm{gcd}(d,q-1), \mathrm{gcd}(d-1,q-1)+1\big)$.
\end{corollary}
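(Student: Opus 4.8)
The plan is to read the corollary off directly from the two structural theorems by exhibiting explicit lines that attain the claimed intersection numbers. Recall that $\mathrm{deg}(S_f)$ is by definition the largest number of points in which any non-vertical affine line $y=ax+b$ meets the graph of $f=x^d$, equivalently the largest number of roots of $f-ax-b$ in $\F_q$ over all $(a,b)\in\F_q^2$. Hence \emph{any} single such line meeting the graph in $k$ points immediately furnishes the lower bound $\mathrm{deg}(S_f)\geq k$, and it suffices to produce one good line of each relevant type.

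First I would establish (i). Taking $i=0$ in Theorem~\ref{thm:(d,q-1)}(i), the horizontal line $y=\alpha^{0}=1$ meets the graph of $f$ in exactly $m=\mathrm{gcd}(d,q-1)$ points (the $m$ roots of $x^m-1$ in $\F_q^*$). Algebraically this is just the statement that $f-1=x^d-1$ has exactly $\mathrm{gcd}(d,q-1)$ distinct roots in $\F_q$, so $v_m(f)\geq 1$. Since $y=1$ is a non-vertical affine line, it is counted in the intersection distribution of $f$, giving $\mathrm{deg}(S_f)\geq m=\mathrm{gcd}(d,q-1)$.

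For (ii) I would argue symmetrically using a line through the origin. Setting $i=0$ in Theorem~\ref{thm:(d-1,q-1)}(i), the line $y=x$ meets the graph in exactly $\mathrm{gcd}(d-1,q-1)+1$ points (the $\mathrm{gcd}(d-1,q-1)$ roots of $x^{\mathrm{gcd}(d-1,q-1)}-1$ in $\F_q^*$ together with the origin $(0,0)$); equivalently $f-x=x^d-x$ has this many roots. As $y=x$ is again a non-vertical line, $\mathrm{deg}(S_f)\geq \mathrm{gcd}(d-1,q-1)+1$. Combining the two bounds yields $\mathrm{deg}(S_f)\geq\mathrm{max}\big(\mathrm{gcd}(d,q-1),\,\mathrm{gcd}(d-1,q-1)+1\big)$, as required.

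There is essentially no genuine obstacle here: Theorems~\ref{thm:(d,q-1)} and~\ref{thm:(d-1,q-1)} already perform all the counting via the cyclotomic partition, and the corollary is merely the selection of one line of each type followed by taking the larger of the two bounds. The only points requiring a moment's care are bookkeeping ones: confirming that $y=1$ and $y=x$ are precisely the sort of non-vertical affine lines whose intersection sizes $\mathrm{deg}(S_f)$ maximises over (in particular neither passes through the internal nucleus $(0,1,0)$, so no conversion between the $v_i$ and $u_i$ counts via Theorem~\ref{thm:v_itou_i} is needed), and noting that the hypothesis $2\leq d\leq q-1$ ensures both $\mathrm{gcd}(d,q-1)$ and $\mathrm{gcd}(d-1,q-1)$ are well-defined positive integers so that the exhibited lines genuinely exist.
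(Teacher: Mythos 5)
Your proposal is correct and follows exactly the paper's intended argument: the paper presents this corollary as "a direct consequence" of Theorems~\ref{thm:(d,q-1)} and~\ref{thm:(d-1,q-1)}, i.e.\ precisely the observation that the line $y=1$ meets the graph in $\gcd(d,q-1)$ points and the line $y=x$ meets it in $\gcd(d-1,q-1)+1$ points, each of which is a lower bound for $\mathrm{deg}(S_f)$. Your additional bookkeeping remarks (that these lines avoid $(0,1,0)$, so no $v_i$/$u_i$ conversion is needed) are sound and do not change the substance.
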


By combining these results, we can evaluate $\mathrm{deg}(S_f)$ for several types of monomials $f$.  This includes as special cases all monomials featured in Table 3.1 of \cite{LiPot} that do not correspond to those of  \cite{CouHen} or \cite{Blu} (except $x^{q-2}$ when $q$ is even, considered separately below).

\begin{theorem}\label{thm:summarye|q-1}
Let $e$ be a proper divisor of $q-1$.
\begin{itemize}
\item[(i)] Let $f=x^{q-1}$; then $\mathrm{deg}(S_f)=q-1$.
\item[(ii)] Let $f=x^e$; then $\mathrm{deg}(S_f)=e$.
\item[(iii)] Let $f=x^{e+1}$; then $\mathrm{deg}(S_f)=e+1$.
\item[(iv)] Let $f=x^{q-e}$; then $\mathrm{deg}(S_f)=e+1$.
\end{itemize}
\end{theorem}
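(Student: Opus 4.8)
The unifying idea is to sandwich $\mathrm{deg}(S_f)$ between a lower bound supplied by Corollary~\ref{cor:lbound} (which itself rests on the explicit line families of Theorems~\ref{thm:(d,q-1)} and~\ref{thm:(d-1,q-1)}) and an upper bound, and to observe that in every case the two bounds coincide. For (i)--(iii) the relevant upper bound is simply the polynomial degree from Lemma~\ref{lem:degbound}. Concretely, in (i) we have $\mathrm{gcd}(q-1,q-1)=q-1$, so Corollary~\ref{cor:lbound}(i) gives $\mathrm{deg}(S_f)\geq q-1$ while Lemma~\ref{lem:degbound} gives $\mathrm{deg}(S_f)\leq q-1$; in (ii), since $e\mid q-1$ we have $\mathrm{gcd}(e,q-1)=e$, yielding $e\leq \mathrm{deg}(S_f)\leq e$; and in (iii), using $\mathrm{gcd}((e+1)-1,q-1)=\mathrm{gcd}(e,q-1)=e$, Corollary~\ref{cor:lbound}(ii) gives $\mathrm{deg}(S_f)\geq e+1$, matched by the degree bound $\mathrm{deg}(S_f)\leq e+1$. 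Each of these three is a one-line squeeze.

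Case (iv), with $f=x^{q-e}$, is the only one requiring real care, since the polynomial degree $q-e$ is far too large to be a useful upper bound. Writing $d=q-e$, I would first compute the gcd: from $q-1=(q-e-1)+e$ together with $e\mid q-1$ one gets $\mathrm{gcd}(d-1,q-1)=\mathrm{gcd}(q-e-1,q-1)=\mathrm{gcd}(e,q-1)=e$, so Corollary~\ref{cor:lbound}(ii) supplies the lower bound $\mathrm{deg}(S_f)\geq e+1$. For the upper bound I would instead invoke Theorem~\ref{thm:ubound}, obtaining $\mathrm{deg}(S_f)\leq \mathrm{min}(q-e,\,e+1)$, after checking its hypothesis $2<d<q-1$ (equivalently $1<e<q-2$).

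The crux, and the step I expect to be the main obstacle, is resolving this minimum in the right direction: one needs the elementary fact that a proper divisor $e$ of $q-1$ satisfies $e\leq (q-1)/2$ (the largest proper divisor of an integer $n$ is $n/p$, where $p$ is its smallest prime factor). This inequality forces $e+1\leq q-e$, so $\mathrm{min}(q-e,e+1)=e+1$, and the upper and lower bounds agree at $e+1$. The same inequality also secures the hypotheses of Theorem~\ref{thm:ubound}. Finally, I would note that the degenerate value $e=1$ (where $x^{q-e}=x^{q-1}$) is subsumed by case (i) rather than (iv), so no inconsistency arises.
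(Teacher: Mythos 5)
Your proof is correct and follows essentially the same route as the paper's: lower bounds from Corollary~\ref{cor:lbound} in every case, upper bounds from degree considerations, the gcd computation $\gcd(q-e-1,q-1)=e$ for case (iv), and the resolution of $\min(q-e,\,e+1)=e+1$ via the fact that a proper divisor satisfies $e\leq(q-1)/2$. The only immaterial differences are that you use Lemma~\ref{lem:degbound} instead of Theorem~\ref{thm:ubound} for the upper bounds in (ii) and (iii), and that you are somewhat more explicit about the hypothesis checks and the degenerate value $e=1$.
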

\begin{proof}
We show that the stated values give both an upper and lower bound for $\mathrm{deg}(S_f)$ in each case.  Case (i) is immediate from Lemma \ref{lem:degbound} and Theorem \ref{thm:(d,q-1)}.

Otherwise, by Theorem \ref{thm:ubound},  $\mathrm{deg}(S_{x^d}) \leq \mathrm{min}(d,q+1-d)$.
\begin{itemize}
    \item{
        In case (ii), $\mathrm{min}(e,q+1-e)=e$.
    }
    \item{
        In case (iii), $\mathrm{min}(e+1,q+1-(e+1))=e+1$.
    }
    \item{
        In case (iv), $\mathrm{min}(q-e,q+1-(q-e))=\mathrm{min}(q-e,e+1)=e+1$.
    }
\end{itemize}
In all cases, this follows from the fact that $e \leq (q-1)/2$, since $e$ is a proper divisor of $q-1$.

We next consider lower bounds.  For (ii) and (iii), the results are immediate from Corollary \ref{cor:lbound}:
\begin{itemize}
    \item{
        In case (ii), $\mathrm{gcd}(e,q-1)=e$.
    }
    \item{
        In case (iii), $\mathrm{gcd}((e+1)-1,q-1)+1=e+1$.
    }
    \item{
        In case (iv), since $e \mid q-1$, it follows that $e \mid q-1-e$.  Since also $\mathrm{gcd}(q-1,q-1-e) \mid e$, we have that $\mathrm{gcd}(q-1,q-1-e)=e$
    }
\end{itemize}
The result then follows from Corollary \ref{cor:lbound}(ii).
\end{proof}

\begin{proposition}
Let $q=2^s$ ($s>1$) and let $f=x^{q-2}$. Then $\mathrm{deg}(S_f)=2$. 
\end{proposition}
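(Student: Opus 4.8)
The plan is to give a geometric argument that realises $S_f$ as a subset of a hyperoval, which immediately bounds its degree by $2$. First I would record the elementary observation that, since $q$ is even and $q-2\geq 2$, we have $f(x)=x^{q-2}=x^{-1}$ for every $x\in\F_q^{*}$ and $f(0)=0$. Hence the affine points of $S_f$ are the point $(0,0,1)$ together with the points $(x,x^{-1},1)$ for $x\in\F_q^{*}$, and the point at infinity is $(0,1,0)$.

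Next I would bring in the conic $\mathcal{C}$ of $\pg(2,q)$ with equation $z^2=xy$. Normalising $z=1$ gives the points $(x,x^{-1},1)$ with $x\neq 0$, while $z=0$ forces $xy=0$ and yields $(0,1,0)$ and $(1,0,0)$; thus $\mathcal{C}$ is a $(q+1)$-arc. Comparing this list with the description of $S_f$ above, I obtain the key identity $S_f=(\mathcal{C}\setminus\{(1,0,0)\})\cup\{(0,0,1)\}$, so that $S_f$ differs from the conic $\mathcal{C}$ only by deleting $(1,0,0)$ and inserting the point $N=(0,0,1)$.

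The heart of the argument, and the place where the hypothesis that $q$ is even is essential, is to show that $N=(0,0,1)$ is the (external) nucleus of $\mathcal{C}$. A line through $N$ has equation $\alpha x+\beta y=0$ with $(\alpha,\beta)\neq(0,0)$. For $x=0$ and for $y=0$ one checks directly that the line meets $\mathcal{C}$ in the single point $(0,1,0)$, respectively $(1,0,0)$. For a line with $\alpha,\beta\neq 0$, substituting $y=(\alpha/\beta)x$ into $z^2=xy$ gives $z^2=(\alpha/\beta)x^2$; since the Frobenius map $t\mapsto t^2$ is a bijection of $\F_q$ when $q$ is even, this pins down a unique ratio $z/x$ and hence exactly one point of $\mathcal{C}$ on the line. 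Therefore every line through $N$ is a unisecant, so $N$ is the nucleus of $\mathcal{C}$. This is exactly the step that fails in odd characteristic, and I expect it to be the main obstacle; once the bijectivity of squaring is invoked it is a short computation.

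Finally I would invoke the fact recalled in Section~\ref{subsec:background} that, for $q$ even, a conic together with its nucleus is a $(q+2)$-arc (a hyperoval). Thus $\mathcal{C}\cup\{N\}$ is a hyperoval, and by the displayed identity $S_f\subseteq \mathcal{C}\cup\{N\}$ with $|S_f|=q+1$. Since any subset of an arc is again an arc, $S_f$ is a $(q+1)$-arc and so has no $3$-secant, giving $\mathrm{deg}(S_f)\leq 2$. As $S_f$ plainly has bisecants (equivalently, $\mathrm{gcd}(q-3,q-1)+1=2$ via Corollary~\ref{cor:lbound}, using that $q-3$ is odd), the reverse bound $\mathrm{deg}(S_f)\geq 2$ holds, and we conclude $\mathrm{deg}(S_f)=2$.
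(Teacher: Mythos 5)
Your proof is correct, but it takes a genuinely different route from the paper's. The paper proves this proposition in one line, as the case $p=2$, $i=1$ of its Corollary~\ref{cor:q-p^i}: there the upper bound $\mathrm{deg}(S_f)\le 2$ is obtained purely algebraically, by noting that the non-zero roots of $x^{q-2}-ax-b$ are those of $-ax^{2}-bx^{}+1$ (after multiplying by $x^{q-1-d}$), and that the conceivable maximum of $3$ roots would force $b=0$, where $x^{2}=a^{-1}$ has only one solution since squaring is a bijection of $\F_{2^s}$. Your argument uses the bijectivity of Frobenius too, but on the geometric side: you realise $S_f$ as the conic $z^2=xy$ with the point $(1,0,0)$ exchanged for its nucleus $(0,0,1)$, hence as a $(q+1)$-subset of a hyperoval, which kills all $3$-secants with no root counting at all. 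Notably, the paper itself sketches exactly this picture in the Remark immediately following the proposition ($x^{q-2}$ is an o-polynomial, and the oval is a conic with one point replaced by the nucleus), but relegates it to an unproved remark; you have in effect supplied the proof of that remark. Your route buys slightly more than the statement asks — it exhibits $S_f$ as an oval — while the paper's route buys generality, since Corollary~\ref{cor:q-p^i} handles all monomials $x^{q-p^i}$ over $\F_{p^s}$ at once. Two minor points: concluding that $\mathcal{C}$ is a $(q+1)$-arc needs the (standard, but worth stating) fact that $z^2-xy$ is irreducible, not just the count of its rational points; and your lower bound is fine, either by taking the line through two affine points of $S_f$, or via Corollary~\ref{cor:lbound}(ii) since $\gcd(q-3,q-1)=1$ when $q$ is even.
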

\begin{proof}
Clearly $\mathrm{deg}(S_f) \geq 2$ , while by Corollary \ref{cor:q-p^i}, $\mathrm{deg}(S_f) \leq 2$.
\end{proof}

\begin{remark}
The case $f=x^{q-2}$ may be set in a geometric context as follows.  Recall that a $(q+1)$-set with degree 2 is an oval, which is equivalent to a set given by an o-polynomial. The polynomial $x^{q-2}$ is a known example of an o-polynomial (see \cite{Hirschfeld}) and the corresponding oval consists of a conic with one point replaced by the nucleus.  The picture is essentially the same as Figure~\ref{fig:oddqsecondexample}; the difference between $q$ even and $q$ odd is that in the even case, the point $(0,0,1)$ is the nucleus of the conic, whereas in the odd case it lies on only two tangents, so the degree is $3$.
\end{remark}

For any monomial $f=x^d$, Theorem \ref{thm:(d,q-1)} precisely describes the possible intersections of its graph with horizontal lines and Theorem \ref{thm:(d-1,q-1)}  precisely describes the possible intersections of its graph with lines through the origin.  Since from the definition of the intersection distribution we do not consider intersections with the line at infinity nor vertical lines, it remains to consider only lines of the type $y=ax+b$ where $a,b$ are both non-zero.  This situation is more complicated, and a similar characterisation is not feasible.

We begin with upper bounds.
Recall that Theorem \ref{thm:ubound} gives a stronger upper bound when $a,b$ are both non-zero: the number of roots of $x^d-ax-b$ in $\F_q$ is at most $q-d$.

\emph{Lacunary} polynomials were introduced by Redei in \cite{Red}.  In \cite{SolWhiYip}, Solymosi, White and Yip describe a lacunary polynomial as a polynomial with ``a substantial gap between the degree of two consecutive terms", where the two highest terms are most commonly considered.  It is clear that our polynomial $x^d-ax-b$ can be described in this way, particularly as $d$ increases. The following lacunary polynomial result in \cite{SolWhiYip} provides us with useful insights about the intersection with lines $y=ax+b$ when $a,b \neq 0$.  In the following theorem, we present a combination of Lemma 3.1 and Lemma 3.5 of \cite{SolWhiYip}, together with ideas implicit in their proof, expressed in the language of this paper.  As in the previous cases, we are exploiting our knowledge of divisors of $q-1$.  Essentially, we show that the roots of $x^d-ax-b$ ($a,b, \neq 0$) can be partitioned into the roots of a set of distinct polynomials which depend on certain cyclotomic classes.

For a polynomial $f \in \F_q[x]$, denote by $R(f)$ the set of distinct roots of $f$ in $\F_q$.

\begin{theorem}[\cite{SolWhiYip}]\label{thm:lacunary}
Let $d\geq 1$ such that $d \mid q-1$.  Let $\alpha$ be a primitive element of $\F_q$.
\begin{itemize}
\item[(i)]  Let $h=x^{(q-1)/d - \ell}+g$ be a polynomial, where $\ell \geq 0$, $0<\mathrm{deg}(g)<(q-1)/d - \ell$ and $h(0) \neq 0$.  Then 
\begin{itemize}
    \item[(a)] For $0 \leq i \leq d-1$, $R(h) \cap C_i^d = R(x^{\ell}g+\alpha^{i(q-1)/d}) \cap C_i^d$.
\item[(b)] $|R(h)| \leq d(\ell+ \mathrm{deg}(g))$.
\end{itemize}
\item[(ii)]  Let $h=x^{m+(q-1)/d}+g$ be a polynomial, where $m \geq 0$, $0<\mathrm{deg}(g)<(q-1)/d + m$ and $h(0) \neq 0$.  Then 
\begin{itemize}
\item[(a)] For $0 \leq i \leq d-1$, $R(h) \cap C_i^d = R(\alpha^{i(q-1)/d}x^m+g) \cap C_i^d$.
\item[(b)] $|R(h)| \leq d \, \mathrm{max}(m, \mathrm{deg}(g))$.
\end{itemize}
\end{itemize}
\end{theorem}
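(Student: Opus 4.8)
The plan is to prove both parts (i) and (ii) by the same cyclotomic strategy, exploiting the fact that $d \mid q-1$ so that the cyclotomic classes $C_i^d$ of order $d$ partition $\F_q^*$, and that on each class the high-degree monomial of $h$ collapses to a constant. I would treat part (i) first and then observe that part (ii) follows by an almost identical argument.

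\emph{Part (i).} First I would fix an index $i$ with $0 \leq i \leq d-1$ and analyse $R(h) \cap C_i^d$. The key observation is that for any root $x \in C_i^d$, we have $x = \alpha^{i} \beta$ for some $\beta \in \langle \alpha^d \rangle = C_0^d$, so $x^{q-1} = 1$ and, crucially, the high-degree term behaves predictably modulo $q-1$. To make this precise I would multiply $h$ by the appropriate power of $x$ (using that non-zero roots of a polynomial coincide with non-zero roots after multiplying by a unit monomial, as in the proof of Theorem~\ref{thm:ubound} and Lemma~\ref{lem:roots}) to reduce the exponent of the leading term modulo $q-1$. Concretely, writing $h = x^{(q-1)/d - \ell} + g$, for a non-zero $x$ we have $x^{(q-1)/d-\ell}$ related to $x^{-\ell}$ via $x^{(q-1)/d} = (x^{q-1})^{1/d}$-type reasoning: since $x \in C_i^d$, the value $x^{(q-1)/d}$ equals $\alpha^{i(q-1)/d}$, a $d$-th root of unity depending only on the class index $i$. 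This is the heart of the matter: on the single class $C_i^d$, the leading term $x^{(q-1)/d-\ell}$ equals $\alpha^{i(q-1)/d} x^{-\ell}$. Multiplying $h$ through by $x^{\ell}$ then shows that the roots of $h$ in $C_i^d$ are exactly the roots of $x^{\ell} g + \alpha^{i(q-1)/d}$ lying in $C_i^d$, which is statement (a).

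\emph{The counting bound (b).} For part (b) I would sum over the $d$ classes. On class $C_i^d$ the relevant polynomial $x^{\ell} g + \alpha^{i(q-1)/d}$ has degree at most $\ell + \deg(g)$, hence contributes at most $\ell + \deg(g)$ roots. Summing over $i = 0, \dots, d-1$ gives $|R(h)| \leq d(\ell + \deg(g))$; I would note that the zero root is excluded by the hypothesis $h(0) \neq 0$, so nothing is lost by working only with the multiplicative classes. For part (ii), the argument is structurally identical: writing $h = x^{m + (q-1)/d} + g$, on $C_i^d$ the factor $x^{(q-1)/d}$ again equals $\alpha^{i(q-1)/d}$, so the leading term equals $\alpha^{i(q-1)/d} x^{m}$, yielding (ii)(a) directly (no multiplication by a monomial is needed here since the exponent only increases by $(q-1)/d$). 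The bound (ii)(b) then follows because $\alpha^{i(q-1)/d} x^m + g$ has degree $\max(m, \deg(g))$, and summing over the $d$ classes gives $|R(h)| \leq d \max(m, \deg(g))$.

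The step I expect to be the main obstacle is verifying carefully that the substitution $x^{(q-1)/d} = \alpha^{i(q-1)/d}$ is valid \emph{as an identity on the class} and that passing between $h$ and the reduced polynomial $x^{\ell}g + \alpha^{i(q-1)/d}$ neither introduces spurious roots nor loses genuine ones. In particular I would need to confirm that multiplying by $x^{\ell}$ is harmless because we have restricted to non-zero $x$ (the class $C_i^d \subset \F_q^*$), and that the reduced polynomial is not identically zero (its constant term $\alpha^{i(q-1)/d}$ is non-zero, which rules out degeneracy). The conditions $0 < \deg(g) < (q-1)/d - \ell$ (respectively $< (q-1)/d + m$) are what guarantee that the two terms of $h$ genuinely occupy distinct exponent ranges so that the reduction is well-defined and the degree bookkeeping in (b) is correct; I would make explicit use of these inequalities when bounding the degree of the reduced polynomial and when ensuring the leading term of $h$ does not interfere with $g$.
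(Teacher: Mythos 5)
Your proposal is correct and takes essentially the same route as the paper's proof: both arguments multiply $h$ by $x^{\ell}$ (harmless since $h(0)\neq 0$ forces all roots to be non-zero), use the fact that $x^{(q-1)/d}$ is the constant $\alpha^{i(q-1)/d}$ on each class $C_i^d$ to reduce to the polynomials $x^{\ell}g+\alpha^{i(q-1)/d}$ (resp.\ $\alpha^{i(q-1)/d}x^m+g$), and then sum the resulting degree bounds over the $d$ classes. The only slight imprecision is your claim that the reduced polynomial in part (i) has constant term $\alpha^{i(q-1)/d}$ (false when $\ell=0$), but this is immaterial because $\deg(g)\geq 1$ already guarantees non-degeneracy, exactly as the paper notes.
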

\begin{proof}
First note that, in both cases, by assumption all roots of $h$ and $g$ are non-zero. 

For (i), since the set of non-zero roots of $h$ is the same as the set of non-zero roots of $x^{\ell}h$, we may consider $x^{\ell}h =x^{(q-1)/d} + x^{\ell}g$.  Let $r$ be a root of $x^{(q-1)/d} + x^{\ell}g$.  The $d$ cyclotomic classes of order $d$ partition $\F_q^*$, so $r \in C_i^d$ for some $0 \leq i \leq d-1$, i.e. $r=\alpha^{ad+i}$ for some $0 \leq a \leq (q-1)/d-1$.  Then $r^{(q-1)/d}+r^{\ell}g(r)=\alpha^{i(q-1)/d}+r^{\ell}g(r)$. Hence, the roots in $C_i^d$ of $x^{\ell}h$ are precisely the roots in $C_i^d$ of the polynomial $k_i=x^{\ell}g+\alpha^{i(q-1)/d}$ (where $\alpha^{i(q-1)/d} \in C_0^{(q-1)/d}$).  To obtain the upper bound, observe that since $\mathrm{deg}(g) \geq 1$, each $k_i$ ($0 \leq i \leq d-1$) has at most $\ell+\mathrm{deg}(g)$ roots.  

For (ii), let $r$ be a root of the polynomial $h=x^{m+(q-1)/d}+g$, where as above $r=\alpha^{ad+i} \in C_i^d$. Then $h(r)=\alpha^{i(q-1)/d} r^m + g(r)$. Hence, the roots in $C_i^d$ of $h$ are precisely the roots in $C_i^d$ of $k_i=\alpha^{i(q-1)/d} x^m + g$.  Since $\mathrm{deg}(g) \geq 1$ and $g(0) \neq 0$, each $k_i$ ($0 \leq i \leq d-1$) is a non-zero polynomial with at most $\mathrm{max}(m, \mathrm{deg}(g))$ roots. 
\end{proof}

When $d=1$ and $C_0^1=\F_q^*$, then Theorem \ref{thm:lacunary}(i)(a) becomes $R(x^{q-1-\ell}+g)=R(x^{\ell}g+1)$.
We now specialize to our situation.

\begin{theorem}\label{thm:lac_ax+b}
\begin{itemize}
\item[(i)] Let $f=x^{e-\ell}$ be a monomial, where $q-1=de$ ($d,e$ positive integers) and $\ell \geq 0$.  For any line $y=ax+b$ with $a,b \neq 0$, the number of points of intersection with $y=f(x)$ is at most $(\ell+1)(q-1)/e$.
\item[(ii)] Let $f=x^{e+m}$ be a monomial, where $q-1=de$ ($d,e$ positive integers) and $m \geq 1$.  For any line $y=ax+b$ with $a,b \neq 0$, the number of points of intersection with $y=f(x)$ is at most $m(q-1)/e$.
\end{itemize}
\end{theorem}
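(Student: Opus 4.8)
The plan is to obtain both bounds as direct specialisations of the lacunary polynomial result Theorem~\ref{thm:lacunary}, with the divisor playing the role of that theorem's parameter $d$. First I would observe that the number of intersection points of $y=f(x)$ with the line $y=ax+b$ equals $|R(h)|$, the number of distinct roots in $\F_q$ of $h:=f-ax-b$. Since $a,b\neq 0$, we have $h(0)=-b\neq 0$, so $x=0$ contributes no root and all roots of $h$ are non-zero; moreover the degree-$1$ tail $g:=-ax-b$ satisfies $\deg g=1$ and $g(0)\neq 0$. Writing $q-1=de$ gives $(q-1)/d=e$, which is precisely the exponent appearing in Theorem~\ref{thm:lacunary}, so $h$ is a lacunary polynomial of exactly the shape required in each case.

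For part (i) I would take $f=x^{e-\ell}$, so that $h=x^{e-\ell}+g=x^{(q-1)/d-\ell}+g$ with $g=-ax-b$. In the generic range $e-\ell\geq 2$ the hypotheses of Theorem~\ref{thm:lacunary}(i) hold: $0<\deg g=1<(q-1)/d-\ell$ and $h(0)\neq 0$. Applying bound (i)(b) then gives $|R(h)|\leq d(\ell+\deg g)=d(\ell+1)=(\ell+1)(q-1)/e$, as claimed. For part (ii) I would take $f=x^{e+m}$, so $h=x^{m+(q-1)/d}+g$; since $m\geq 1$ the condition $0<\deg g=1<(q-1)/d+m$ again holds, and Theorem~\ref{thm:lacunary}(ii)(b) yields $|R(h)|\leq d\,\max(m,\deg g)=d\max(m,1)=dm=m(q-1)/e$, using $m\geq 1$.

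Since the substantive combinatorics—partitioning the roots across the cyclotomic classes $C_i^d$—has already been carried out in Theorem~\ref{thm:lacunary}, there is no genuinely hard step here; the result is essentially a corollary. The only care required is confirming that the parameter identification is exact and handling the boundary cases. The step I expect to be slightly delicate is verifying the strict degree inequality $\deg g<(q-1)/d-\ell$ (resp.\ $<(q-1)/d+m$): it can fail only when $e-\ell\leq 1$, i.e.\ when $f$ is linear or constant, and in those degenerate cases the monomial meets any line in at most one point, so the stated bounds $(\ell+1)(q-1)/e$ and $m(q-1)/e$ hold trivially. I would also confirm that $h$ is monic of the stated degree, so that its leading term is genuinely the claimed power of $x$ and $g$ is genuinely its lower-order part; this is immediate since $f$ is a monic monomial and the perturbation $-ax-b$ has degree $1$.
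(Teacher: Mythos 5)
Your proposal is correct and follows the same route as the paper: both obtain the bounds by specialising Theorem~\ref{thm:lacunary} with $g$ equal to the linear part $-ax-b$ (so $\deg g=1$, $g(0)\neq 0$) and reading off parts (i)(b) and (ii)(b) after the identification $(q-1)/d=e$. Your explicit check of the boundary hypothesis $\deg g<(q-1)/d-\ell$ and the degenerate cases $e-\ell\leq 1$ is slightly more careful than the paper's (terser) proof, but the substance is identical.
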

\begin{proof}
We apply Theorem \ref{thm:lacunary} with  $g=(-a)x+(-b)$ where $a,b \neq 0$. For (i), the roots in $C_i^d$ of $h=x^{e-\ell}+ax+b$ are precisely the roots in $C_i^d$ of the polynomial $ax^{\ell+1}+bx^{\ell}+\alpha^{i(q-1)/d}$. 
For (ii), the roots in $C_i^d$ of $h=x^{e+m}+ax+b$ are precisely the roots in $C_i^d$ of the polynomial $\alpha^{i(q-1)/d}x^m+ ax+b$. 
\end{proof}

Observe that, for monomials $f=x^d$ satisfying the criteria of Theorem \ref{thm:lac_ax+b}, an upper bound for $\mathrm{deg}(S_f)$ is now obtainable by considering the maximum possible number of intersections for each of the three types of lines, i.e. taking the maximum of the bounds $\mathrm{gcd}(d,q-1)$ and $ 1+\mathrm{gcd}(d-1,q-1)$ and the appropriate upper bound of Theorem \ref{thm:lac_ax+b}.

\begin{example}
Consider $\F_{25}$.  Define the primitive element $\alpha$ of $\F_{25}$ to be a root of the primitive polynomial $x^2+4x+2$. So $\alpha^2=\alpha+3$ while $\alpha^6=2, \alpha^{12}=-1$ and $\alpha^{18}=3$.  
\begin{itemize}
\item[(i)] Let $f=x^{11}$ and consider $\mathrm{deg}(S_f)$.  The upper bound from Theorem \ref{thm:ubound} is $11$; however the maximum number of intersections from horizontal lines is $\mathrm{gcd}(d,q-1)=1$ and from lines through the origin is $1+\mathrm{gcd}(d-1,q-1)=3$.  Theorem \ref{thm:lac_ax+b}(ii) with $e=12$, $d=2$ and $\ell=1$ yields an upper bound of $4$ for the number of intersections with lines $y=ax+b$ with $a,b \neq 0$. So $\mathrm{deg}(S_f) \leq 4$, and this is attainable only by lines of the last type. By Theorem \ref{thm:lacunary}(i), the (non-zero) square roots of $x^{11}+ax+b$ ($a,b \neq 0$) are the roots of $ax^2+bx+1$, while the non-square roots of $x^{11}+ax+b$ are the roots of $ax^2+bx-1$. 

Consider $x^{11}-x-\alpha^3$. Each root $r$ of this polynomial is a root of $-x^2-\alpha^3 x + r^{12}$.  Non-square roots $r$ of the quadratic $-x^2-\alpha^3 x-1$ are given by
\begin{itemize}
    \item[$\bullet$]{
        $r=\alpha^3$ since $-\alpha^6-\alpha^6-1=\alpha^{18}+\alpha^{18}-1=3+3-1=0$,
    }
    \item[$\bullet$]{
        $r=\alpha^{21}=\alpha^{-3}$ since $-\alpha^{-6}-1-1=-\alpha^{18}-2=-3-2=0$.
    }
\end{itemize}
Square roots $r$ of $-x^2-\alpha^3 x +1$ are given by
\begin{itemize}
    \item[$\bullet$]{
        $r=\alpha^{16}$ since $-\alpha^{32}-\alpha^{19}+1=\alpha^{44}+\alpha^{31}+1=3\alpha^2+2\alpha+1=3(\alpha+3)+2\alpha+1=0$,
    }
    \item[$\bullet$]{
        $r=\alpha^{20}$ since $-\alpha^{40}-\alpha^{23}+1=\alpha^{52}+ \alpha^{35}+1=\alpha^4+2\alpha^5+1=(2\alpha+2)+2(4\alpha+1)+1=0$.
    }
\end{itemize}
Hence, the line $y=x+\alpha^3$ intersects $y=x^{11}$ in four distinct points, and $\mathrm{deg}(S_f)=4$.

\item[(ii)] Let $f=x^{14}$.  Here $\mathrm{gcd}(d,q-1)=2$ and $1+\mathrm{deg}(d-1,q-1)=2$.  Using Theorem \ref{thm:lac_ax+b}(ii) with $e=12$, $d=2$ and $m=2$, there are at most four points of intersection with lines $y=ax+b$ ($a,b \neq 0$).

Consider $x^{14}-x-2$. Each root $r$ of this polynomial is a root of $r^{12}x^2-x-2$. Square roots are given by 
\begin{itemize}
    \item[$\bullet$]{
        $r=2$ since $2^2-2-2=0$,
    }
    \item[$\bullet$]{
        $r=4$ since $4^2-4-2=10=0$.
    }
\end{itemize}
Non-square roots are given by
\begin{itemize}
    \item[$\bullet$]{
        $r=\alpha^{13}=-\alpha$ since $-(-\alpha)^2-(-\alpha)-2=-\alpha^2+\alpha+3=0$,
    }
    \item[$\bullet$]{
        $r=\alpha^{17}=-\alpha^5$ since $-\alpha^{10}+\alpha^5-2=-(4 \alpha+4)+(4\alpha+1)+3=0$.
    }
\end{itemize}
Hence, $y=x+2$ intersects $y=x^{14}$ in four distinct points, and $\mathrm{deg}(S_f)=4$.
\end{itemize}
\end{example}

In a particular case, there is a source of lines that achieve the bound of Theorem \ref{thm:lac_ax+b} (and can be shown to achieve $\mathrm{deg}(S_{x^d}))$, using another concept from the theory of finite fields, namely the trace function. See \cite{LidNie} for more information about the trace.

\begin{definition}
Let $q$ be a prime power, $m \in \mathbb{N}$. Let $F=\F_{q^m}$ and let $K=\F_q$.  Let $\alpha \in K$. The trace function $\Tr_{F/K}(\alpha)$ of $\alpha$ over $K$ is defined by
$$ \Tr_{F/K}(\alpha) =\alpha + \alpha^q+ \cdots + \alpha^{q^{m-1}}.$$
$\Tr_{F/K}(\alpha)$ is always an element of $K$, and each element of $K$ has precisely $q^{m-1}$ preimages under the trace function (since the trace is a linear transformation from $F$ onto $K$).
\end{definition}

\begin{theorem}\label{thm:trace}
Let $q=p^2$ and let $f=x^{q-p}$. Then $\mathrm{deg}(S_f)=p$.
\end{theorem}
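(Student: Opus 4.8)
The plan is to prove the two bounds $\deg(S_f)\le p$ and $\deg(S_f)\ge p$ separately. The upper bound is immediate from Corollary~\ref{cor:q-p^i}: taking $s=2$ and $i=1$ (so that $i\mid s$ and $i<s$) gives $f=x^{q-p}$ and $\deg(S_f)\le p^{1}=p$. For the lower bound it suffices to exhibit a single line meeting the graph of $y=x^{q-p}$ in exactly $p$ distinct affine points. Since $\gcd(q-p,q-1)=p-1$ and $\gcd(q-p-1,q-1)+1=2$, Theorems~\ref{thm:(d,q-1)} and~\ref{thm:(d-1,q-1)} tell us that horizontal lines give at most $p-1$ intersection points and lines through the origin at most $2$, so the required line must have the form $y=ax+b$ with $a,b\ne 0$.

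First I would transform the relevant trinomial into a linearised polynomial. Exactly as in the proof of Theorem~\ref{thm:ubound} (equivalently, Theorem~\ref{thm:lacunary}(i) with $d=1$ and $\ell=p-1$), for $a,b\ne 0$ the nonzero roots of $x^{q-p}-ax-b$ are precisely the roots of $ax^{p}+bx^{p-1}-1$; moreover $x=0$ is not a root of $x^{q-p}-ax-b$ because $b\ne 0$, so every intersection point comes from a nonzero root. Since $0$ is not a root of $ax^{p}+bx^{p-1}-1$ either, the substitution $y=1/x$ is legitimate and converts this polynomial into the Artin--Schreier-type polynomial $y^{p}-by-a$. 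The key structural observation is that $L(y)=y^{p}-by$ is an $\F_p$-linear map, so $L(y)=a$ has either $0$ or $|\ker L|$ solutions, and these are automatically distinct since $y^{p}-by-a$ has nonzero constant derivative $-b$.

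The heart of the argument is to choose $a$ and $b$ making $L(y)=a$ have exactly $p$ solutions, none equal to $0$. Here $\ker L=\{0\}\cup\{y\ne 0:y^{p-1}=b\}$, and $y^{p-1}=b$ is solvable precisely when $b$ lies in the image of the $(p-1)$-power map, that is, when $N_{\F_{p^2}/\F_p}(b)=b^{p+1}=1$; for such $b$ one has $|\ker L|=p$ and hence $|\mathrm{Im}\,L|=p$. Writing such a $b$ as $b=\beta^{p-1}$ and substituting $y=\beta w$ reduces $L(y)=a$ to the standard Artin--Schreier equation $w^{p}-w=a\beta^{-p}$, which has $p$ solutions exactly when $\Tr_{\F_{p^2}/\F_p}(a\beta^{-p})=0$. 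As the trace is a surjective $\F_p$-linear map with kernel of size $p\ge 2$, I can choose $a\ne 0$ satisfying this trace condition; the choice $a\ne 0$ simultaneously guarantees $y=0$ is not a solution, so the $p$ solutions $y$ are nonzero and $x=1/y$ runs over $p$ distinct nonzero elements of $\F_{p^2}$. This produces a line $y=ax+b$ with $a,b\ne 0$ meeting the graph in exactly $p$ points, giving $\deg(S_f)\ge p$ and hence $\deg(S_f)=p$.

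I expect the main obstacle to be marshalling the two conditions that must hold at once --- the norm condition $N(b)=1$ forcing $|\ker L|=p$, and the trace condition on $a$ (together with $a\ne 0$) forcing solvability without a root at the origin --- and confirming that both can be satisfied simultaneously. The reduction $x\mapsto 1/x$ followed by $y=\beta w$ is what makes the problem $\F_p$-linear and brings the trace in as the natural counting tool, so framing the whole computation in that language is the cleanest route.
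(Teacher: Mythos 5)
Your proof is correct, and its skeleton matches the paper's exactly: the upper bound via Corollary~\ref{cor:q-p^i}, and the lower bound by multiplying the trinomial by $x^{p-1}$, passing to reciprocals, and counting roots of the resulting Artin--Schreier-type polynomial via the trace. The difference is in how the lower bound is closed out. The paper simply exhibits the single explicit line $y=-x-1$: with $a=b=-1$ the reciprocal substitution turns $x^{q-p}+x+1$ into $x^p+x+1$, whose left-hand side $x^p+x$ \emph{is} the trace map, so the equation $\Tr_{\F_{p^2}/\F_p}(x)=-1$ visibly has exactly $p$ (nonzero) solutions and no further analysis is needed. You instead run the argument for arbitrary $a,b\neq 0$, analysing the linearised polynomial $L(y)=y^p-by$: the norm condition $N_{\F_{p^2}/\F_p}(b)=1$ forces $|\ker L|=p$, and the substitution $y=\beta w$ with $b=\beta^{p-1}$ reduces solvability to $\Tr(a\beta^{-p})=0$, after which a nonzero trace-zero element supplies a valid $a$. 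What your version buys is a characterisation of \emph{which} lines $y=ax+b$ with $a,b\neq 0$ meet the graph in $p$ points (the paper's choice is the special case where $L$ is literally the trace, since $N(-1)=1$); what the paper's buys is brevity and explicitness. One small inaccuracy in your preamble: the claim that the extremal line ``must'' have $a,b\neq 0$ fails for $p=2$, where lines through the origin already attain $2=p$ points; this does not affect your proof, since your construction stands on its own and does not use that claim.
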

\begin{proof} We work in $\F_{p^2}$ with $f=x^{q-p}$. By Corollary \ref{cor:q-p^i}, $\mathrm{deg}(S_f) \leq p$. We show by direct construction that $\mathrm{deg}(S_f)=p$.

Consider the polynomial $x^{q-p}+x+1$. Multiplying by $x^{p-1}$, we see that this has the same (non-zero) roots as $x^p+x^{p-1}+1$. Substituting $x$ for $x^{-1}$ and rearranging transforms this equation to $x^p+x+1$.  From above, $\Tr_{\F_{p^2}/\F_p}(x)=x^p+x$, so this polynomial has precisely $p$ roots in $\F_{p^2}$, namely the set of $p$ elements whose  trace over $\F_p$ equals $-1$.  The inverses of these roots provide $p$ non-zero roots for $x^p+x^{p-1}+1$ and thus for  $x^{q-p}+x+1$. Hence, the line $y=-x-1$ intersects in $p$ distinct points with the graph of $f$.
\end{proof}
\begin{example}
Consider $\F_{25}$. Let $f=x^{q-p}=x^{20}$.  We will use the approach of Theorem \ref{thm:trace} to show that $\mathrm{deg}(S_f)=5$, so our original polynomial is $x^{20}+x+1$.  Here $\Tr_{25/5}(x)=x^5+x$.

Let $\alpha$ be a root of the primitive polynomial $x^2+4x+2$. So $\alpha^2=\alpha+3$ while $\alpha^6=2, \alpha^{12}=-1$ and $\alpha^{18}=3$. It can be calculated that the five elements with trace $-1$ are $\{\alpha^6, \alpha^8, \alpha^{13}, \alpha^{16}, \alpha^{17}\}$.  These are five distinct non-zero roots of $x^5+x+1$; their inverses $\{\alpha^7, \alpha^8, \alpha^{11}, \alpha^{16}, \alpha^{18}\}$ are five distinct non-zero roots of $x^5+x^4+1$ and hence of $x^{20}+x+1$.

To illustrate this with $\alpha^7$, note that
\[
    \alpha^{35}+\alpha^{28}+1=\alpha^{11}+\alpha^4+1=(3\alpha+2)+(2\alpha+2)+1=0,
\]
and in the original equation we have
\[
    \alpha^{140}+\alpha^7+1=\alpha^{20}+\alpha^7+1=(3\alpha+4)+2\alpha+1=0.
\]
For $\alpha^8$, we have $\alpha^{40}+\alpha^{32}+1=\alpha^{16}+\alpha^{8}+1$, which is $0$ since $\{\alpha^8,\alpha^{16}\}$ are the non-trivial cube roots of unity; in the original equation $\alpha^{160}+\alpha^8+1=\alpha^{16}+\alpha^8+1=0$.
\end{example}

Finally, we present a bound for $\mathrm{deg}(S_{x^d})$ from the literature which is independent of $d$.  The following theorem is proved by Kelley and Owen in \cite{KelOwe}.  

\begin{theorem}[\cite{KelOwe}]\label{thm:ax+b}
For a trinomial $f=x^n+ax^s+b \in \F_q[x]$, where $a,b \in \F_q$ are both non-zero, let $\delta=\mathrm{gcd}(n,s,q-1)$. Then 
$| R(f)| \leq \delta \left\lfloor \frac{1}{2}+ \sqrt{ \frac{q-1}{\delta}}  \right\rfloor$.
\end{theorem}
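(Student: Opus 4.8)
The plan is to count roots by exploiting the divisibility $\delta \mid n$, $\delta \mid s$, $\delta \mid q-1$ to reduce the problem to counting solutions inside a multiplicative subgroup, and then to bound that count by an elementary pairs-counting argument. Write $n=\delta n_0$, $s=\delta s_0$ and $q-1=\delta N$. Since $b\neq 0$, every root lies in $\F_q^*$, and multiplying any root $r$ by a $\delta$-th root of unity $\omega$ (these exist as $\delta\mid q-1$, and form the group $C_0^N$ of order $\delta$) yields another root, because $(\omega r)^n=r^n$ and $(\omega r)^s=r^s$. Hence the roots fall into orbits of size exactly $\delta$ under $C_0^N$, and the map $r\mapsto t=r^\delta$ is $\delta$-to-one from $R(f)$ onto the set of solutions in the subgroup $C_0^\delta$ (of order $N$) of the reduced equation $t^{n_0}+at^{s_0}+b=0$, using $r^n=t^{n_0}$ and $r^s=t^{s_0}$. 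Writing $M$ for the number of such $t$, we obtain $|R(f)|=\delta M$, and crucially $\gcd(n_0,s_0,N)=\gcd(n,s,q-1)/\delta=1$. It then remains to prove $M\le \lfloor \tfrac12+\sqrt N\rfloor$.

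The heart of the argument is to establish $M(M-1)\le N-1$ by exhibiting an injection from ordered pairs of distinct solutions into $C_0^\delta\setminus\{1\}$ (which has $N-1$ elements). Given distinct solutions $t_i,t_j$, set $\rho=t_i/t_j$; the aim is to show that $\rho$ determines the pair. Substituting $t_i=\rho t_j$ and treating $U=t_j^{n_0}$, $V=t_j^{s_0}$ as unknowns yields the consistent linear system $\rho^{n_0}U+a\rho^{s_0}V=-b$ and $U+aV=-b$, whose determinant is $a(\rho^{n_0}-\rho^{s_0})$. The non-degeneracy is the key subtlety: if this determinant vanished then the two equations would force $(\rho^{n_0}-1)(-b)=0$, hence $\rho^{n_0}=\rho^{s_0}=1$ (as $b\neq 0$), so the order of $\rho$ would divide both $\gcd(n_0,N)$ and $\gcd(s_0,N)$, hence $\gcd(n_0,s_0,N)=1$, giving $\rho=1$ and contradicting $t_i\neq t_j$. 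Thus the determinant is nonzero, $U=t_j^{n_0}$ and $V=t_j^{s_0}$ are determined by $\rho$, and because $\gcd(n_0,s_0,N)=1$ makes $\gcd(n_0,s_0)$ invertible modulo $N$, the exponent of $t_j$ in the cyclic group $C_0^\delta$ is recovered; hence $t_j$, and then $t_i=\rho t_j$, are determined. This yields the injection and the inequality $M(M-1)\le N-1$.

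Finally, solving the quadratic inequality gives $M\le \tfrac12+\sqrt{N-\tfrac34}\le \tfrac12+\sqrt N$, and integrality yields $M\le \lfloor \tfrac12+\sqrt N\rfloor$; multiplying by $\delta$ gives $|R(f)|\le \delta\lfloor \tfrac12+\sqrt{(q-1)/\delta}\rfloor$, as required. Degenerate situations, where $s_0\equiv 0$ or $n_0\equiv s_0\pmod N$ collapse the reduced equation to a binomial, only decrease the count and are handled directly via Lemma~\ref{lem:roots}. I expect the main obstacle to be the non-degeneracy step in the injection: making rigorous that a vanishing determinant is incompatible with $t_i\neq t_j$, which is precisely where the hypothesis $\gcd(n,s,q-1)=\delta$ is indispensable.
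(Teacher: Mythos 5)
The paper does not actually prove this statement; it is quoted verbatim from Kelley and Owen \cite{KelOwe}, so there is no internal proof to compare against. Your argument, however, is correct and complete, and it is essentially a self-contained reconstruction of the Kelley--Owen approach: reduce to the index-$\delta$ subgroup via $r\mapsto r^{\delta}$ (valid because $\delta\mid n$, $\delta\mid s$, $\delta\mid q-1$ makes the fibres full orbits under the $\delta$-th roots of unity, giving $|R(f)|=\delta M$ exactly), then show that the solutions in the subgroup $C_0^{\delta}$ of order $N=(q-1)/\delta$ form a multiplicative Sidon-type set, i.e.\ distinct ordered pairs of solutions have distinct ratios, whence $M(M-1)\le N-1$ and the stated bound follows by solving the quadratic and using integrality. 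I checked the two delicate points and both hold: (i) non-degeneracy --- if $a(\rho^{n_0}-\rho^{s_0})=0$ then subtracting the two linear equations gives $(\rho^{n_0}-1)(-b)=0$, so $\rho^{n_0}=\rho^{s_0}=1$, and the order of $\rho$ divides $\gcd(n_0,s_0,N)=1$, forcing $\rho=1$; and (ii) recovery of $t_j$ from $(U,V)=(t_j^{n_0},t_j^{s_0})$ --- writing $t_j=g^x$, the pair determines $xn_0$ and $xs_0$ modulo $N$, hence $x\gcd(n_0,s_0)$ modulo $N$ by B\'ezout, and $\gcd(n_0,s_0)$ is invertible modulo $N$ precisely because $\gcd(n_0,s_0,N)=1$. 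Two cosmetic remarks: the phrase ``hence $\gcd(n_0,s_0,N)=1$'' should read ``hence the order divides $\gcd(n_0,s_0,N)=1$''; and your final paragraph on degenerate cases ($N\mid s_0$ or $n_0\equiv s_0\pmod N$) is unnecessary, since in those cases your own injection argument already shows $M\le 1$ (the vanishing determinant forces $\rho=1$, so no two distinct solutions exist), so no appeal to Lemma~\ref{lem:roots} is needed.
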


This leads to the following corollary.
\begin{corollary}
Let $d \geq 2$ and let $f=x^d$.  Then for any line $y=ax+b$ with $a,b \in \F_q$ both non-zero, the maximum number of points of intersection with $y=f(x)$ is $\left\lfloor \frac{1}{2}+ \sqrt{q-1} \right\rfloor$.
\end{corollary}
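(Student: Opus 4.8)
The plan is to recognise the intersection points as the distinct $\F_q$-roots of a single trinomial and then apply Theorem~\ref{thm:ax+b} essentially verbatim. First I would observe that a point $(x_0,x_0^d)$ lies on both $y=x^d$ and $y=ax+b$ precisely when $x_0\in\F_q$ satisfies $x_0^d-ax_0-b=0$; hence the number of points of intersection is exactly $|R(x^d-ax-b)|$, where $R$ denotes the set of distinct roots in $\F_q$ as in the statement of Theorem~\ref{thm:ax+b}.

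Next I would cast $x^d-ax-b$ into the trinomial shape $x^n+a'x^s+b'$ required by the theorem, taking $n=d$, $s=1$, $a'=-a$ and $b'=-b$. The hypotheses are immediate to verify: since $a,b\neq 0$ we have $a',b'\neq 0$, and since $d\geq 2$ we have $n>s\geq 1$, so we genuinely have a trinomial of the right form. The key computation is then the value of $\delta=\gcd(n,s,q-1)=\gcd(d,1,q-1)=1$; the presence of the \emph{linear} term $-ax$ forces $s=1$ and therefore $\delta=1$. Substituting $\delta=1$ into the bound of Theorem~\ref{thm:ax+b} collapses the leading factor and replaces $(q-1)/\delta$ by $q-1$, yielding $|R(x^d-ax-b)|\leq\left\lfloor\tfrac12+\sqrt{q-1}\right\rfloor$, exactly the claimed quantity.

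I do not expect any genuine obstacle here, since the result is a direct specialisation of the cited theorem: the only point that requires care is confirming that the linear term pins $s$ down to $1$, so that $\delta=1$ and the factor $\delta$ disappears from both the coefficient and the radicand. It is worth emphasising in the write-up that the resulting bound is completely independent of $d$ and holds uniformly over all lines $y=ax+b$ with $a,b\neq 0$, which is precisely what makes it a useful complement to the $d$-dependent bounds of Theorem~\ref{thm:ubound} and Theorem~\ref{thm:lac_ax+b}.
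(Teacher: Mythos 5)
Your proposal is correct and matches the paper's proof exactly: the paper likewise applies Theorem~\ref{thm:ax+b} to the trinomial $f-ax-b$ and notes that $\delta=1$, which is precisely your computation $\gcd(d,1,q-1)=1$ forced by the linear term. Your write-up simply spells out the verification of the hypotheses in more detail than the paper's one-line argument.
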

\begin{proof}
This is immediate upon applying Theorem \ref{thm:ax+b} to $f-ax-b$; here $\delta=1$.
\end{proof}

Combining our bounds for all three types of line, we therefore obtain the following.  Note that the final result gives similar results to Corollary \ref{cor:(d,q-1)} and Corollary \ref{cor:(d-1,q-1)}.
\begin{theorem}\label{thm:3part}
Let $d \geq 2$ and let $f=x^d$.  Then
$$ \operatorname{deg}(S_f) \leq \operatorname{max}\left(\mathrm{gcd}(d,q-1), 1+\mathrm{gcd}(d-1,q-1), \left\lfloor \frac{1}{2}+ \sqrt{q-1}  \right\rfloor\right).$$
In particular, if $\lfloor \frac{1}{2}+ \sqrt{q-1} \rfloor \leq \operatorname{max}(\mathrm{gcd}(d,q-1), 1+\mathrm{gcd}(d-1,q-1))$, then 
$$\operatorname{deg}(S_f)=\operatorname{max}(\mathrm{gcd}(d,q-1), 1+\mathrm{gcd}(d-1,q-1)).$$
\end{theorem}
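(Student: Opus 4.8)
The plan is to use the fact that $\deg(S_f)$ is the largest number of distinct roots in $\F_q$ of $x^d - ax - b$ as $(a,b)$ ranges over $\F_q^2$, equivalently the maximum number of intersection points of $y = f(x)$ with a non-vertical line $y = ax + b$. Every such line falls into exactly one of three disjoint families according to whether $a$ and $b$ vanish: the horizontal lines ($a = 0$), the non-horizontal lines through the origin ($a \neq 0$, $b = 0$), and the general lines ($a, b \neq 0$). Since these three families jointly exhaust all $(a,b) \in \F_q^2$, the degree of $S_f$ equals the maximum, over the three families, of the largest number of intersection points attainable within that family.

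First I would bound each family separately using results already in hand. For the horizontal lines, Theorem~\ref{thm:(d,q-1)} shows that $y = f(x)$ meets any horizontal line in at most $\gcd(d, q-1)$ points. For the non-horizontal lines through the origin, Theorem~\ref{thm:(d-1,q-1)} gives the bound $1 + \gcd(d-1, q-1)$. For the general lines with $a, b \neq 0$, the corollary to Theorem~\ref{thm:ax+b} (specialising Kelley and Owen's trinomial bound with $\delta = 1$) yields $\lfloor \frac{1}{2} + \sqrt{q-1} \rfloor$. Taking the maximum of these three bounds immediately gives the stated upper bound for $\deg(S_f)$.

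For the equality statement, I would observe that the hypothesis $\lfloor \frac{1}{2} + \sqrt{q-1} \rfloor \leq \max(\gcd(d,q-1),\, 1 + \gcd(d-1,q-1))$ means the square-root term cannot affect the maximum, so the upper bound collapses to $\max(\gcd(d,q-1),\, 1 + \gcd(d-1,q-1))$. This is matched from below by Corollary~\ref{cor:lbound}, which guarantees $\deg(S_f) \geq \max(\gcd(d,q-1),\, \gcd(d-1,q-1) + 1)$ for every monomial $x^d$; the two bounds coincide, forcing equality.

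Since the argument is essentially an assembly of three bounds already proved, I do not anticipate a substantive obstacle. The one point requiring care is verifying that the three families of lines are genuinely disjoint and jointly exhaustive --- in particular that the line $y = 0$ is correctly counted among the horizontal lines, where it contributes only the single point $(0,0)$ and so never exceeds $\gcd(d,q-1) \geq 1$, and that no line is double-counted between the horizontal and through-origin families. Once this bookkeeping is confirmed, the upper bound and the matching lower bound from Corollary~\ref{cor:lbound} combine without further effort.
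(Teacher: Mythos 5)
Your proof is correct and follows essentially the same route as the paper, which obtains Theorem~\ref{thm:3part} precisely by combining the horizontal-line bound of Theorem~\ref{thm:(d,q-1)}, the through-origin bound of Theorem~\ref{thm:(d-1,q-1)}, and the Kelley--Owen corollary for lines with $a,b\neq 0$, then matching with the lower bound of Corollary~\ref{cor:lbound}. Your extra care about the line $y=0$ and the disjointness of the three families is sound bookkeeping but raises no issue beyond what the paper's argument already covers.
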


We end this section by showing, in Table \ref{tab:q=25}, the values $\operatorname{deg}(S_f)$ for $f=x^d$ where $q=25$ and $1<d<q$.  The second column shows the exact value of $\operatorname{deg}(S_f)$ obtained by computation in GAP \cite{GAP4}, while the other columns indicate results from this section which may be applied to obtain the exact value or best lower/upper bounds (in several cases, multiple results may apply, but only one is given).  The final column provides an example of a line $y=ax+b$ whose intersection with $y=f(x)$ has the maximum possible $\operatorname{deg}(S_f)$ points of intersection (in most cases, these are not unique).

\begin{table}
    \begin{align*}
        \begin{array}{clllll}
        \hline
        d& \mathrm{deg}(S_{x^d})& \mbox{Exact value} & \mbox{Lower bound} & \mbox{Upper bound} & \mbox{Sample line}\\
        \hline
        2 & 2 & 2 \mbox{ (Thm \ref{thm:summarye|q-1}(ii))} &  & & y=1 \\
        3 & 3 & 3 \mbox{ (Thm \ref{thm:summarye|q-1}(ii))} &  & & y=1 \\
        4 & 4 & 4 \mbox{ (Thm\ref{thm:summarye|q-1})(ii))} &  & & y=1 \\
        5 & 5 & 5 \mbox{ (Thm \ref{thm:summarye|q-1})(iii))} &  & & y=x \\
        6 & 6 & 6 \mbox{ (Thm \ref{thm:summarye|q-1})(ii))} &  & & y=1 \\
        7 & 7 & 7 \mbox{ (Thm \ref{thm:summarye|q-1})(iii))} &  & & y=x \\
        8 & 8 & 8 \mbox{ (Thm \ref{thm:summarye|q-1})(ii))} &  & & y=1 \\
        9 & 9 & 9 \mbox{ (Thm \ref{thm:summarye|q-1}(iii))} &  & & y=x \\
        10 & 4 &   & 4 \mbox{ (Cor \ref{cor:lbound})(ii))} & 5 \mbox{ (Thm \ref{thm:3part})}  & y=x \\
        11 & 4 &   & 3 \mbox{ (Cor \ref{cor:lbound}(ii))} & 4 \mbox{ (Thm \ref{thm:lac_ax+b}(i)) } & y=x+\alpha^3 \\
        12 & 12 & 12 \mbox{ (Thm \ref{thm:summarye|q-1})(ii))} &  & & y=1 \\
        13 & 13 & 13 \mbox{ (Thm \ref{thm:summarye|q-1})(iii))} &  & & y=x \\
        14 & 4 &   & 2 \mbox{ (Cor \ref{cor:lbound})(i))} & 4 \mbox{ (Thm \ref{thm:lac_ax+b})(ii)}  & y=x+2 \\
        15 & 3 &   & 3 \mbox{ (Cor \ref{cor:lbound})} & 5 \mbox{ (Thm \ref{thm:3part})}  & y=1 \\
        16 & 8 &  8 \mbox{ (Cor \ref{cor:(d,q-1)})} &  &  & y=1 \\
        17 & 9 & 9 \mbox{ (Cor \ref{cor:(d-1,q-1)})}  &  &   & y=x \\
        18 & 6 & 6 \mbox{ (Cor \ref{cor:(d,q-1)})}  &  &  & y=1 \\
        19 & 7 & 7 \mbox{ (Cor \ref{cor:(d-1,q-1)})}  &   &  & y=x \\
        20 & 5 & 5 \mbox{ (Thm \ref{thm:trace})}   &   &  & y=-x-1 \\
        21 & 5 & 5 \mbox{ (Thm \ref{thm:summarye|q-1}(iv))} &  & & y=x \\
        22 & 4 & 4 \mbox{ (Thm \ref{thm:summarye|q-1}(iv))} &  & & y=x \\
        23 & 3 & 3 \mbox{ (Thm \ref{thm:summarye|q-1}(iv))} &  & & y=x \\
        24 & 24 & 24 \mbox{ (Thm \ref{thm:summarye|q-1}(i))} &  & & y=1 \\
        \hline
        \end{array}
    \end{align*}
    \caption{$\mathrm{deg}(S_{x^d})$ when $q=25$ (where the primitive element $\alpha$ is root of $x^2+4x+2$)}
    \label{tab:q=25}
\end{table}

\section{Algebraic and geometric characterisations of $(q+1)$-sets}\label{sec:specificvalues}

Li and Pott ask under what circumstances the non-hitting index characterises a set and, in those circumstances when it does not, what else about the intersection distribution serves to characterise it. Note that this is not a characterisation up to projective equivalence: the example of ovals shows that it is possible to have the same distribution but still be projectively inequivalent.  In this section, we look at constructions that have natural algebraic/geometric descriptions that serve to characterise them.

We first observe that knowledge of some part of the intersection distribution can be sufficient to determine all remaining values.  This can be particularly effective when the intersection distribution is ``sparse".  The following theorem shows that if $u_i(S)$ is known for $i \geq 3$, then the rest of the intersection distribution can immediately be determined.

\begin{theorem}\label{thm:u_i>2enough} 
Let $q>2$.  For a $(q+1)$-set $S\subset\pg(2,q)$, define
\begin{itemize}
\item $A_S=q^2+q+1-\sum_{i=3}^q u_i(S)$;
\item $B_S=(q+1)^2-\sum_{i=3}^q i u_i(S)$;
\item $C_S=q(q+1) -\sum_{i=3}^q i(i-1) u_i(S)$.
\end{itemize}
Then
\begin{enumerate}
\item[(i)] $u_2(S)=C_S/2$;
\item[(ii)] $u_1(S)=B_S-C_S$;
\item[(iii)] $u_0(S)=A_S-B_S+C_S/2$.
\end{enumerate}
\end{theorem}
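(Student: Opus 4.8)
The plan is to obtain $u_0,u_1,u_2$ by reading them off from the three global counting identities of Theorem~\ref{thm:uisums}, viewed as a linear system in these three unknowns. Throughout I would work under the (harmless) assumption that $S$ has no $(q+1)$-secant, equivalently $u_{q+1}(S)=0$; this is automatic for every set of the form $S_f$ and is exactly what lets the summations in Theorem~\ref{thm:uisums} terminate at $q$, so that the tails match the ranges used to define $A_S$, $B_S$ and $C_S$. The hypothesis $q>2$ simply keeps these index ranges sensible (so that $i=3$ is a genuine intermediate secant size rather than the top one).

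First I would split each of the three identities of Theorem~\ref{thm:uisums} into the contribution of the small indices $i\le 2$ and the tail $\sum_{i=3}^q$. The key observation is that the weights $1$, $i$ and $i(i-1)$ annihilate the small indices in a staggered fashion: in part~(iii) the factor $i(i-1)$ vanishes for $i=0$ and $i=1$, while in part~(ii) the factor $i$ vanishes for $i=0$. Hence, moving the tails to the right-hand side transforms the three identities into $A_S=u_0+u_1+u_2$, $B_S=u_1+2u_2$ and $C_S=2u_2$, which is precisely what the definitions of $A_S$, $B_S$, $C_S$ reduce to after this substitution.

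It then remains only to solve this lower-triangular system from the bottom up. The equation $C_S=2u_2$ gives~(i) directly. Substituting into $B_S=u_1+2u_2=u_1+C_S$ yields $u_1=B_S-C_S$, which is~(ii). Finally $A_S=u_0+u_1+u_2$ gives $u_0=A_S-u_1-u_2=A_S-(B_S-C_S)-C_S/2=A_S-B_S+C_S/2$, which is~(iii); since the system is triangular with nonzero diagonal entries the solution is unique, so nothing further is required. There is essentially no hard step here — the whole content lives in Theorem~\ref{thm:uisums}, and the rest is bookkeeping. The only point genuinely requiring care, and the reason I flag the $(q+1)$-secant assumption, is to confirm that the tail $\sum_{i=3}^q$ accounts for \emph{every} secant of size at least $3$: for a set of $q+1$ collinear points the omitted term $u_{q+1}=1$ contributes $q(q+1)$ to part~(iii), and the formulas would then need a corresponding correction.
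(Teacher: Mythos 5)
Your proof is correct and takes essentially the same route as the paper: the paper's proof simply states that the result is ``immediate from Theorem~\ref{thm:uisums}'' (noting that $C_S$ is even since $2\mid i(i-1)$), and your argument is precisely the spelled-out version of that, solving the resulting triangular system from the bottom up. Your explicit flagging of the implicit hypothesis $u_{q+1}(S)=0$ is a genuine point of care the paper leaves unstated --- the formulas do fail for a line, so the theorem tacitly assumes $S$ has degree at most $q$.
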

\begin{proof}
This is immediate from Theorem \ref{thm:uisums}; note that $C_S$ is always even since $2 \mid i(i-1)$ for all $i \in \mathbb{Z}$.
\end{proof}
For sets arising from polynomials, we have an analogous result:
\begin{theorem}\label{thm:v_i>2enough} 
Let $q>2$.  For a polynomial $f\in\F_q[x]$, define
\begin{itemize}
\item $A_f=q^2-\sum_{i=3}^q v_i(f)$;
\item $B_f=q^2-\sum_{i=3}^q i v_i(f)$;
\item $C_f=q(q-1) -\sum_{i=3}^q i(i-1) v_i(f)$.
\end{itemize}
Then
\begin{enumerate}
\item[(i)] $v_2(f)=C_f/2$;
\item[(ii)] $v_1(f)=B_f-C_f$;
\item[(iii)] $v_0(f)=A_f-B_f+C_f/2$.
\end{enumerate}
\end{theorem}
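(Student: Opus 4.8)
The plan is to recognise this as the exact affine counterpart of Theorem~\ref{thm:u_i>2enough}: there, Theorem~\ref{thm:uisums} supplied three linear relations among the $u_i(S)$, and here Theorem~\ref{thm:visums} supplies the analogous three relations among the $v_i(f)$. The three quantities $v_0(f)$, $v_1(f)$, $v_2(f)$ are then recovered by treating those identities as a linear system in the unknowns $v_0, v_1, v_2$ (with the $v_i$ for $i\geq 3$ regarded as known) and back-substituting. I would carry this out in the order (i), (ii), (iii), since each step feeds into the next.

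First I would invoke Theorem~\ref{thm:visums}(iii), $\sum_{i=2}^q i(i-1)v_i(f)=q(q-1)$. The factor $i(i-1)$ annihilates the would-be $i=0$ and $i=1$ contributions, so the only low-index term is $i=2$, contributing $2v_2(f)$. Splitting this off gives $2v_2(f)=q(q-1)-\sum_{i=3}^q i(i-1)v_i(f)=C_f$, which is assertion~(i). Here I would pause to note that $C_f$ is automatically even: $q(q-1)$ is a product of consecutive integers, and each $i(i-1)$ is likewise even, so $v_2(f)=C_f/2$ is a bona fide non-negative integer.

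Next I would use Theorem~\ref{thm:visums}(ii), $\sum_{i=1}^q i v_i(f)=q^2$. Isolating the $i=1$ and $i=2$ terms yields $v_1(f)+2v_2(f)=q^2-\sum_{i=3}^q i v_i(f)=B_f$; substituting $2v_2(f)=C_f$ from the previous step gives $v_1(f)=B_f-C_f$, which is~(ii). Finally, Theorem~\ref{thm:visums}(i), $\sum_{i=0}^q v_i(f)=q^2$, gives $v_0(f)+v_1(f)+v_2(f)=q^2-\sum_{i=3}^q v_i(f)=A_f$; substituting $v_1(f)=B_f-C_f$ and $v_2(f)=C_f/2$ produces $v_0(f)=A_f-(B_f-C_f)-C_f/2=A_f-B_f+C_f/2$, establishing~(iii).

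I do not expect a genuine obstacle here: the argument is a purely mechanical triangular back-substitution, entirely parallel to the proof of Theorem~\ref{thm:u_i>2enough}. The only points requiring any care are choosing the correct order of elimination (solve for $v_2$ before $v_1$ before $v_0$, so that each substitution is available when it is needed) and confirming the divisibility that makes $C_f/2$ an integer.
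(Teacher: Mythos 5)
Your proposal is correct and matches the paper's intended argument exactly: the paper treats this as the affine analogue of Theorem~\ref{thm:u_i>2enough}, derived immediately from the identities of Theorem~\ref{thm:visums} by the same triangular back-substitution (solve for $v_2$, then $v_1$, then $v_0$), together with the same evenness observation for $C_f$. Your write-up simply makes explicit the steps the paper leaves implicit.
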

\subsection{Sets contained in the union of a small number of projective lines}\label{sec:smallunion}
Li and Pott observed that the $(q+1)$-sets with the smallest non-hitting indices arise from lines of $\pg(2,q)$, or from sets that include all but one or two points on a single line of $\pg(2,q)$.  Here we summarise what is known about these cases, then extend our attention to sets contained in the union of a small number of lines.  This allows us to give an algebraic/geometric characterisation of the structure of the sets arising from various ``natural" monomials for which Li and Pott established the intersection distribution.  We interpret the sets arising from these monomials as particular examples from more general constructions of $(q+1)$-sets for which it is straightforward to determine the intersection distribution.  These examples  give rise to a range of values for $u_0$.

\subsubsection{One projective line} A $(q+1)$-set corresponding to a projective line does not arise from a polynomial as it does not have an internal nucleus; the non-zero entries of its intersection distribution are $u_{q+1}=1$ and $u_1=q^2+q$ \cite{LiPot}.
\subsubsection{One projective line and one additional point}
A $(q+1)$-set corresponding to $q$ points on a line with one further point not on that line has the second smallest value of $u_0$; the non-zero entries of their intersection distribution are $u_q=1$, $u_2=q$, $u_1=q^2-q+1$, $u_0=q-1$ \cite{LiPot}.  Such sets are projectively equivalent to those given by polynomials of degree at most one (note that any two such polynomials are projectively equivalent), and have $v_q=1$, $v_2=0$, $v_1=q^2-q$ and $v_0=q-1$.
\subsubsection{Two projective lines}\label{sec:twolines}
A $(q+1)$-set $S$ consisting of $q-1$ points on a line $\ell$ with two further points $P,Q$ on a line $m\neq \ell$ with $\ell \cap m\in  S$ has the third smallest value of $u_0$; the non-zero entries of its intersection distribution are $u_{q-1}=1$, $u_{3}=1$, $u_2=2q-4$, $u_1=q^2-3q+7$, $u_0=2q-4$ \cite{LiPot}.  This set does not have an internal nucleus, so it does not arise from a polynomial.

A $(q+1)$-set $S$ corresponding to $q-1$ points on a line $\ell$ with two further points $P,Q$ on a line $m\neq \ell$ with $\ell \cap m\notin  S$ has the fourth smallest value of $u_0$; the non-zero entries of its intersection distribution are $u_{q-1}=1$, $u_2=2q-1$, $u_1=q^2-q+4$, $u_0=2q-3$ \cite{LiPot}.  We observe that the polynomial $x^{q-1}$ gives rise to a set of this form.  It has two internal nuclei, and has $v_{q-1}=1$, $v_2=q-1$, $v_1=q^2-3q+3$, $v_0=2q-3$.  

We generalise these examples by considering all remaining $(q+1)$-sets whose points are contained in the union of two projective lines.  Note that the case described above is the only one that arises from a polynomial.  We begin with the case where the intersection of the two lines is not part of the set.

\begin{construction}\label{con:twolines1}
Let $\ell$ and $m$ be lines of $\pg(2,q)$.  For $t\in\{0,1,\dots,\lfloor(q+1)/2\rfloor-2\}$, we construct a $(q+1)$-set by taking $\lceil(q+1)/2\rceil+t$ points on $\ell\setminus\ell\cap m$ and $\lfloor(q+1)/2\rfloor-t$ points on $m\setminus \ell\cap m$.
\end{construction}
A comment on notation: in listing the intersection distribution in the statement of Theorem~\ref{thm:twolines1}, we observe that the subscripts $\lceil(q+1)/2\rceil+t$ and $\lfloor(q+1)/2\rfloor-t$ may coincide with each other, or with the subscript 2, for particular choices of $q$ and $t$. In these cases, they would not represent distinct entries in the intersection distribution. The values in question should instead be added. We have chosen to adopt this approach to avoid giving a complicated description in which all such cases are considered separately.  An analogous  approach is used in all theorems throughout this section. 
\begin{theorem}\label{thm:twolines1}
    The non-zero entries of the intersection distribution of the $(q+1)$-set $S$ arising from Construction~\ref{con:twolines1} are
    \begin{align*}
u_{\lceil(q+1)/2\rceil+t}(S)&=1, \\
u_{\lfloor(q+1)/2\rfloor-t}(S)&=1,\\
u_2(S)&=(\lceil(q+1)/2\rceil+t)(\lfloor(q+1)/2\rfloor-t),\\
u_1(S)&=q^2+q-2(\lceil(q+1)/2\rceil+t)(\lfloor(q+1)/2\rfloor-t),\\
u_0(S)&=(\lceil(q+1)/2\rceil+t)(\lfloor(q+1)/2\rfloor-t)-1.
    \end{align*}
    \end{theorem}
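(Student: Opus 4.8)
The plan is to compute the whole intersection distribution by first pinning down the only secants of size at least three and then feeding this into Theorem~\ref{thm:u_i>2enough}. Write $X=\ell\cap m$, and set $r=\lceil(q+1)/2\rceil+t$ and $s=\lfloor(q+1)/2\rfloor-t$, so that $r+s=q+1=|S|$ and, by Construction~\ref{con:twolines1}, $S$ consists of $r$ points on $\ell\setminus\{X\}$ and $s$ points on $m\setminus\{X\}$ with $X\notin S$. The first step is the geometric observation that any line other than $\ell$ and $m$ meets each of $\ell,m$ in a single point and hence contains at most two points of $S$; thus $\ell$ is the unique $r$-secant, $m$ is the unique $s$-secant, and these are the only lines meeting $S$ in three or more points. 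In particular $u_i(S)=0$ for $i\ge 3$ apart from the entries $u_r(S)=u_s(S)=1$.

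Given this, the remaining values follow by direct substitution into Theorem~\ref{thm:u_i>2enough}. Here $\sum_{i\ge 3}u_i(S)=2$, $\sum_{i\ge 3} i\,u_i(S)=r+s=q+1$, and $\sum_{i\ge 3} i(i-1)u_i(S)=r(r-1)+s(s-1)$. Using $r+s=q+1$ one checks that $r(r-1)+s(s-1)=(q+1)^2-(q+1)-2rs$, so that $C_S=q(q+1)-\big((q+1)^2-(q+1)-2rs\big)=2rs$, while $A_S=q^2+q-1$ and $B_S=q^2+q$. Theorem~\ref{thm:u_i>2enough} then yields $u_2(S)=C_S/2=rs$, $u_1(S)=B_S-C_S=q^2+q-2rs$, and $u_0(S)=A_S-B_S+C_S/2=rs-1$, which are exactly the claimed values. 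The only genuine computation is the identity $C_S=2rs$; everything else is bookkeeping.

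As an independent check, and to keep the geometry transparent, I would also run the direct count. The $q+1$ lines through $X$ split as $\ell$, $m$, and $q-1$ further lines, each of which meets $\ell$ and $m$ only at $X\notin S$ and so is external. Every line off $X$ corresponds bijectively to a pair $(P_\ell,P_m)$ with $P_\ell\in\ell\setminus\{X\}$ and $P_m\in m\setminus\{X\}$; classifying by membership in $S$ gives $rs$ bisecants, $r(q-s)+(q-r)s=q^2+q-2rs$ unisecants, and $(q-r)(q-s)=rs-q$ external lines. Adding the $q-1$ external lines through $X$ recovers $u_0(S)=(q-1)+(rs-q)=rs-1$, in agreement with the above.

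The main point requiring care is the bookkeeping of coinciding subscripts flagged before the statement: for special $q$ and $t$ the indices $r$, $s$, and $2$ need not be distinct (for instance $r=s=(q+1)/2$ when $q$ is odd and $t=0$, or $s=2$ at the extreme $t=\lfloor(q+1)/2\rfloor-2$). In such cases $u_r$, $u_s$ and the bisecant count $rs$ land in the same entry and must be summed, exactly as the stated convention prescribes; the extreme case $q=3$, $t=0$, where $r=s=2$ and $S$ is a conic, recovers the distribution of Example~\ref{ex:oddconic} as a sanity check. I expect this degenerate-subscript accounting, rather than the core count, to be the only real obstacle.
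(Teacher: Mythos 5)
Your proposal is correct and follows essentially the same route as the paper: identify $u_r(S)=u_s(S)=1$ as the only entries with $i\geq 3$ (the paper does this with the same one-line geometric observation) and then substitute into Theorem~\ref{thm:u_i>2enough}, with the identity $C_S=2rs$ being the same key computation. Your supplementary direct count through and off the point $\ell\cap m$, and your explicit handling of coinciding subscripts, go beyond what the paper writes but only confirm the same result.
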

\begin{proof}
Let $A=\lceil(q+1)/2\rceil+t$ be the number of points on $\ell$ and $B=(\lfloor(q+1)/2\rfloor-t)$ be the number of points on $m$.  
By construction, we have $u_A=1$, $u_B=1$ and $u_i=0$ for all other $i\geq 3$.  Using the notation of Theorem~\ref{thm:u_i>2enough}, we have
\begin{align*}
C_S&=q(q-1)-A(A-1)-B(B-1),\\
&=q(q+1)-A(q-B)-B(q-A),\\
&=2AB,
\end{align*}
so $u_2=AB$.  Then
\[
u_1=B_S-C_S =(q+1)^2-A-B-2AB=q(q+1)-2AB
\]
and 
\[
u_0 =A_S-B_S+\frac{C_S}{2} =q^2+q+1-2-\big((q+1)^2-q-1\big)+AB=AB-1.\qedhere
\]
\end{proof}
We can obtain a closely related construction if we choose the points of $S$ to come from two lines, and this time include the point of intersection of the two lines.
\begin{construction}\label{con:twolines2}
Let $\ell$ and $m$ be lines of $\pg(2,q)$.  For $t\in\{0,1,\dots,\lfloor q/2\rfloor-1\}$, we construct a $(q+1)$-set by taking $\ell\cap m$, together with $\lceil q/2\rceil+t$ points on $\ell\setminus\ell\cap m$ and $\lfloor q/2\rfloor-t$ points on $m\setminus \ell\cap m$.
\end{construction}
\begin{theorem}\label{thm:twolines2}
    The non-zero entries of the intersection distribution of the $(q+1)$-set $S$ arising from 
    Construction~\ref{con:twolines2} are
    \begin{align*}
    u_{\lceil q/2\rceil+t+1}(S)&=1,\\
    u_{\lfloor q/2\rfloor-t+1}(S)&=1,\\
    u_2(S)&=(\lceil q/2\rceil+t)(\lfloor q/2\rfloor-t)\\
    u_1(S)&=q^2-2(\lceil q/2\rceil+t)(\lfloor q/2\rfloor-t) +q-1,\\
    u_0(S)&=(\lceil q/2\rceil+t)(\lfloor q/2\rfloor-t).
    \end{align*}
    \end{theorem}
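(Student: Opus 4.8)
The plan is to follow the template already used for Theorem~\ref{thm:twolines1} and to reduce everything to an application of Theorem~\ref{thm:u_i>2enough}, which recovers $u_0, u_1, u_2$ once the values $u_i(S)$ for $i \geq 3$ are known. I set $A = \lceil q/2\rceil + t$ and $B = \lfloor q/2\rfloor - t$, so that the line $\ell$ carries $A+1$ points of $S$ (its $A$ points off $\ell\cap m$, together with $\ell\cap m$ itself) and $m$ carries $B+1$ points. The one arithmetic feature that distinguishes this construction from Construction~\ref{con:twolines1} is that here the shared point $\ell\cap m$ is counted exactly once, so $A+B = \lceil q/2\rceil + \lfloor q/2\rfloor = q$, rather than $q+1$.

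First I would pin down which lines are $i$-secants for $i\geq 3$. Since every point of $S$ lies on $\ell\cup m$, any line $n$ of $\pg(2,q)$ distinct from both $\ell$ and $m$ meets $\ell$ in a single point and $m$ in a single point, hence meets $S$ in at most two points and is at most a bisecant. Consequently the only secants of degree at least $3$ are $\ell$, an $(A+1)$-secant, and $m$, a $(B+1)$-secant, so that $u_{A+1}(S)=u_{B+1}(S)=1$ and $u_i(S)=0$ for every other $i\geq 3$ (with the additive convention of this section applied whenever the subscripts $A+1$, $B+1$, $2$ happen to coincide).

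Next I would substitute these values into the quantities of Theorem~\ref{thm:u_i>2enough}. The relevant sums are $\sum_{i=3}^q u_i(S)=2$, $\sum_{i=3}^q i\,u_i(S)=(A+1)+(B+1)$, and $\sum_{i=3}^q i(i-1)u_i(S)=(A+1)A+(B+1)B$. Using $A+B=q$ one obtains
\begin{align*}
C_S &= q(q+1)-(A+1)A-(B+1)B \\
&= q(q+1)-(A^2+B^2)-(A+B) \\
&= q^2-(A^2+B^2) = 2AB,
\end{align*}
whence $u_2(S)=C_S/2=AB$. Likewise $B_S=(q+1)^2-(A+B)-2=q^2+q-1$ and $A_S=q^2+q+1-2=q^2+q-1$, so that $u_1(S)=B_S-C_S=q^2+q-1-2AB$ and $u_0(S)=A_S-B_S+C_S/2=AB$. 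Rewriting $AB=(\lceil q/2\rceil+t)(\lfloor q/2\rfloor-t)$ then yields exactly the claimed entries.

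I do not expect a genuine obstacle: the argument is almost entirely bookkeeping built on the same double counts underlying Theorem~\ref{thm:uisums}. The only points demanding care are the correct ``$+1$'' shift coming from including $\ell\cap m$ (which is precisely what forces $A+B=q$ and makes $C_S$ collapse to $2AB$), and the degenerate regimes — for instance $t=\lfloor q/2\rfloor-1$ forces $B=1$, so that $m$ is really only a bisecant and its contribution belongs to $u_2$ — which are absorbed into the additive convention on coinciding subscripts already flagged in the section.
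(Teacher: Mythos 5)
Your proposal is correct and follows essentially the same route as the paper: identify $\ell$ and $m$ as the only secants of degree at least $3$ (an $(\lceil q/2\rceil+t+1)$-secant and a $(\lfloor q/2\rfloor-t+1)$-secant respectively, since any other line meets $\ell\cup m$ in at most two points), then apply Theorem~\ref{thm:u_i>2enough}. The paper states this in two sentences and omits the arithmetic; your explicit computation of $A_S$, $B_S$, $C_S$ using $A+B=q$, as well as your handling of the degenerate coinciding-subscript cases via the section's additive convention, simply fills in details the paper leaves implicit.
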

\begin{proof}
By construction, we have $u_{\lceil q/2\rceil+t+1}(S)=1$,  $u_{\lfloor q/2\rfloor-t+1}(S)=1$, and $u_i=0$ for all other $i\geq 3$.  The remaining values follow immediately by application of Theorem~\ref{thm:u_i>2enough}.
\end{proof}
\subsubsection{Two lines and a point}\label{sec:twolinesonepoint}  Examining all possible $(q+1)$-sets of this form could descend into a tedious list of cases; instead, we restrict our attention to those where the additional point is an internal nucleus of the set.  These are of interest as they arise from polynomials; we first show that a monomial whose intersection distribution was found in \cite{LiPot} can thus be characterised.
\begin{proposition}
Let $q$ be an odd prime power.  For $f=x^{(q+1)/2}$, $S_f$ has the following structure: aside from the point at infinity $(0,1,0)$, the remaining points are $(0,0,1)$, plus $(q-1)/2$ additional points on each of the lines $y=x$ and $y=-x$,
\end{proposition}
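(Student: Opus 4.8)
The plan is to exploit the multiplicative structure of $\F_q^*$ via Euler's criterion. Recall that $S_f$ consists of the point at infinity $(0,1,0)$ together with the $q$ affine points $(x,x^{(q+1)/2},1)$ for $x\in\F_q$. Since $q$ is odd, the exponent $(q+1)/2$ is a positive integer, and the key algebraic observation is that for $x\neq 0$ we may factor $x^{(q+1)/2}=x\cdot x^{(q-1)/2}$, where the second factor is governed by the quadratic character.

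First I would dispose of the case $x=0$: this contributes the affine point $(0,f(0),1)=(0,0,1)$, accounting for the claimed point $(0,0,1)$. For $x\in\F_q^*$, I would then apply Euler's criterion, which states that $x^{(q-1)/2}=1$ when $x$ is a nonzero square and $x^{(q-1)/2}=-1$ when $x$ is a nonsquare. Consequently $f(x)=x\cdot x^{(q-1)/2}$ equals $x$ when $x$ is a nonzero square, so the corresponding point $(x,x,1)$ lies on the line $y=x$; and $f(x)=-x$ when $x$ is a nonsquare, so the point $(x,-x,1)$ lies on the line $y=-x$.

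Finally I would count. In $\F_q^*$ there are exactly $(q-1)/2$ nonzero squares and $(q-1)/2$ nonsquares, yielding $(q-1)/2$ affine points on each of $y=x$ and $y=-x$ distinct from the origin $(0,0,1)$. As a sanity check, the total number of points is $1+1+(q-1)/2+(q-1)/2=q+1$, as required for $S_f$.

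I do not anticipate any serious obstacle; the argument is essentially a direct computation once Euler's criterion is invoked. The only points warranting care are, first, to note that $(0,0,1)$ is the common intersection of the two lines $y=x$ and $y=-x$, so that the phrase ``additional points on each'' is understood to exclude this shared point; and second, to verify that the two families of nonzero points are genuinely disjoint. The latter holds because a point $(x,\pm x,1)$ with $x\neq 0$ cannot simultaneously satisfy $y=x$ and $y=-x$: this would force $2x=0$, which is impossible in odd characteristic.
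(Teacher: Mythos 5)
Your proposal is correct and follows essentially the same route as the paper's own proof: both rest on the observation that $f(x)=x\cdot x^{(q-1)/2}$ equals $x$ for nonzero squares and $-x$ for nonsquares (Euler's criterion), with $f(0)=0$ giving the point $(0,0,1)$, followed by the count of $(q-1)/2$ squares and $(q-1)/2$ nonsquares. Your explicit checks of disjointness in odd characteristic and the total point count are slightly more detailed than the paper's treatment but are the same argument in substance.
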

\begin{proof}
Here, $f(a)=a$ if $a$ is a non-zero square in $\F_q$, $f(a)=-a$ if $a$ is a non-zero non-square in $\F_q$, and $f(0)=0$.  Hence the line $y=x$ contains $1+(q-1)/2=(q+1)/2$ points of the graph, and similarly $y=-x$ contains $(q+1)/2$ points of the graph, with the only point in common being $(0,0,1)$.
\end{proof}

\begin{construction}\label{con:twolinesonepoint}
Let $\ell$ and $m$ be distinct lines of $\pg(2,q)$ where $\ell\cap m$ does not lie on the line $z=0$.  We construct a $(q+1)$-set $S_f$ arising from some polynomial $f\in\F_q[x]$ 
by initially including the point $(0,1,0)$.  Then for each line $n$ through $(0,1,0)$, other than the line $z=0$, we include exactly one point from $\{\ell\cap n,m\cap n\}$. Without loss of generality, this results in selecting $\lceil(q-1)/2\rceil+t$ points of $\ell$ for some $t\in\{0,1,\dotsc,\lfloor(q-1)/2\rfloor -1\}$ and $\lfloor(q-1)/2\rfloor-t$ points of $m$ apart from the point $\ell\cap m$, which is also selected.    
\end{construction}
\begin{theorem}\label{thm:twolinesonepoint}
The non-zero entries of the intersection distribution of the polynomial $f$ that gives rise to the $(q+1)$-set $S_f$ described in Construction~\ref{con:twolinesonepoint} are
\begin{align*}
v_{\lceil(q-1/2)\rceil+t+1}(f)&=1,\\
v_{\lfloor(q-1/2)\rfloor-t+1}(f)&=1,\\
v_2(f)&=(\lceil(q-1)/2\rceil+t)(\lfloor(q-1)/2\rfloor-t),\\
v_1(f)&=(q-1)^2-2(\lceil(q-1)/2\rceil+t)(\lfloor(q-1)/2)\rfloor-t)+q-2,\\
v_0(f)&=q-1+(\lceil(q-1)/2\rceil+t)(\lfloor(q-1)/2)\rfloor-t).
\end{align*}
\end{theorem}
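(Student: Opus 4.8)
The plan is to read off the full intersection distribution from its ``high'' entries by invoking Theorem~\ref{thm:v_i>2enough}, so the substantive task is only to pin down $v_i(f)$ for $i\ge 3$ from the geometry of Construction~\ref{con:twolinesonepoint}. Write $a=\lceil(q-1)/2\rceil+t$ and $b=\lfloor(q-1)/2\rfloor-t$, so that $a+b=q-1$. The affine points of $S_f$ then consist of $a$ points of $\ell$ and $b$ points of $m$ (one per vertical line), together with the point $\ell\cap m$ and the internal nucleus $(0,1,0)$; since $(0,1,0)$ is an internal nucleus with unisecant $z=0$, the construction does yield a set $S_f$ for a genuine polynomial $f$. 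Consequently $\ell$ carries $A:=a+1$ points of $S_f$ and $m$ carries $B:=b+1$ points, and crucially $A+B=(a+b)+2=q+1$.

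First I would verify that $\ell$ and $m$ are the only lines meeting $S_f$ in three or more points. Because $\ell\cap m$ does not lie on $z=0$ and $(0,1,0)$ is an internal nucleus, every line through $(0,1,0)$ other than $z=0$ carries exactly one affine point of $S_f$; in particular neither $\ell$ nor $m$ can be one of these vertical lines (each contains $A,B\ge 2$ points of $S_f$), so neither passes through $(0,1,0)$ and each is an honest non-vertical secant, giving $v_A(f)=1$ and $v_B(f)=1$. Any line other than $\ell$ or $m$ meets each of $\ell,m$ in at most one point, and a line through $(0,1,0)$ meets $S_f$ in at most two points; hence no further line is an $i$-secant with $i\ge 3$, and $v_i(f)=0$ for all other $i\ge 3$.

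With the high entries in hand, I would substitute them into Theorem~\ref{thm:v_i>2enough}. Using $A+B=q+1$ one finds $A_f=q^2-2$ and $B_f=q^2-(A+B)=q^2-(q+1)$, while simplifying $A(A-1)+B(B-1)=(A^2+B^2)-(A+B)=(q+1)^2-2AB-(q+1)$ gives $C_f=q(q-1)-A(A-1)-B(B-1)=2(AB-q)$. Since $AB=(a+1)(b+1)=ab+(a+b)+1=ab+q$, this reduces to $C_f=2ab$. The three formulas of Theorem~\ref{thm:v_i>2enough} then yield $v_2=C_f/2=ab$, $v_1=B_f-C_f=q^2-q-1-2ab$, and $v_0=A_f-B_f+C_f/2=q-1+ab$; rewriting $ab=(\lceil(q-1)/2\rceil+t)(\lfloor(q-1)/2\rfloor-t)$ and noting that $q^2-q-1-2ab=(q-1)^2+q-2-2ab$ recovers the stated values exactly.

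The real obstacle is not the algebra but the bookkeeping of the degenerate cases flagged in the note preceding the theorem: for extreme $t$ the indices $A$, $B$ and $2$ may collide (for instance $B=2$ when $t=\lfloor(q-1)/2\rfloor-1$), so those entries must be amalgamated rather than listed as separate nonzero values. I would therefore state the secant count so that the ``add the coinciding values'' convention applies uniformly, and observe that Theorem~\ref{thm:v_i>2enough} remains valid in these cases because its derivation from the counting identities of Theorem~\ref{thm:visums} never assumes the indices are distinct.
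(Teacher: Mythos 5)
Your proof is correct and takes essentially the same approach as the paper: identify the two large secants $\ell$ and $m$, argue that $v_i(f)=0$ for all other $i\geq 3$, and then recover $v_2$, $v_1$, $v_0$ from Theorem~\ref{thm:v_i>2enough}. The only difference is that you explicitly carry out the algebra and the degenerate-index bookkeeping (colliding subscripts, the amalgamation convention) that the paper's proof leaves implicit, which is a faithful elaboration rather than a different argument.
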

\begin{proof}
By construction, we have $v_{\lceil(q-1/2)\rceil+t+1}(f)=1$, $v_{\lfloor(q-1/2)\rfloor-t+1}(f)=1$ and $v_i=0$ for all other $i\geq 3$.  The remaining values can then be computed directly using Theorem~\ref{thm:v_i>2enough}.
\end{proof}
We note that taking $t=0$ in Theorem~\ref{thm:twolinesonepoint} recovers the intersection distribution for $x^{(q+1)/2}$ that was previously determined by Li and Pott \cite{LiPot}.  All the sets arising from this construction have an internal nucleus, and it is an open question to determine the polynomials that give rise to these sets for other values of $t$.

As in Section~\ref{sec:twolines}, we can also vary this construction by excluding the intersection of the two lines from the set.

\begin{construction}\label{con:twolinesonepoint2}
Let $\ell$ and $m$ be distinct lines of $\pg(2,q)$, other than the line $z=0$, whose intersection $\ell \cap m$ lies on $z=0$ (i.e. the affine portions of these lines are parallel).  We construct a $(q+1)$-set $S_f$ arising from some polynomial $f\in\F_q[x]$ 
by initially including the points $(0,1,0)$.  Then for each line $n$ through $(0,1,0)$, other than the line $z=0$, we include exactly one point from $\{\ell\cap n,m\cap n\}$. Without loss of generality, this results in selecting $\lceil q
/2\rceil+t$ points of $\ell$ for some $t\in\{0,1,\dotsc,\lfloor q/2\rfloor -2\}$ and $\lfloor q/2\rfloor-t$ points of $m$, all of which are distinct from $\ell \cap m$.    \end{construction}
\begin{theorem}\label{thm:twolinesonepoint2}
The non-zero entries of the intersection distribution of the polynomial $f$ that gives rise to the $(q+1)$-set $S_f$ described in Construction~\ref{con:twolinesonepoint} are
\begin{align*}
v_{\lceil q/2\rceil+t}(f)&=1,\\
v_{\lfloor q/2\rfloor-t}(f)&=1,\\
v_2(f)&=(\lceil q/2\rceil+t)(\lfloor q/2\rfloor-t),\\
v_1(f)&=q^2-q-2(\lceil q/2\rceil+t)(\lfloor q/2\rfloor-t),\\
v_0(f)&=q-2+(\lceil q/2\rceil+t)(\lfloor q/2\rfloor-t).
\end{align*}
\end{theorem}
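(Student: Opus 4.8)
The plan is to mirror the proofs of Theorems~\ref{thm:twolines1} and~\ref{thm:twolinesonepoint}: first pin down the entries $v_i(f)$ for $i \geq 3$ by a direct geometric argument, and then invoke Theorem~\ref{thm:v_i>2enough} to recover $v_0$, $v_1$ and $v_2$ by arithmetic. Write $A = \lceil q/2\rceil + t$ and $B = \lfloor q/2\rfloor - t$ for the number of selected points on $\ell$ and $m$ respectively, and note that $A + B = q$, so together with $(0,1,0)$ the set $S_f$ has the required $q+1$ points.

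First I would verify that $\ell$ and $m$ genuinely contribute to the intersection distribution of $f$, i.e.\ that they are non-vertical affine lines of the form $y = sx + b$. Since $\ell \cap m$ lies on $z=0$ but neither line equals $z=0$, and since the affine portions of $\ell$ and $m$ are parallel, their common point at infinity is a slope-direction point of the form $(1,s,0)$, which is distinct from $(0,1,0)$; hence neither line passes through $(0,1,0)$, and both are counted among the lines $y = ax+b$. The point at infinity $\ell \cap m$ does not lie in $S_f$ (the selected points are all affine and distinct from $\ell\cap m$), so $\ell$ meets $S_f$ in exactly its $A$ selected affine points and $m$ in its $B$ selected affine points, giving $v_A(f) \geq 1$ and $v_B(f) \geq 1$.

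Next I would argue that these are the only lines meeting $S_f$ in three or more points. All affine points of $S_f$ lie on $\ell \cup m$; any line $r$ distinct from $\ell$ and $m$ meets each of $\ell$, $m$ in at most one point, hence meets $\ell \cup m$, and therefore the graph, in at most two points. Consequently $v_i(f) = 0$ for $i \geq 3$ except for $v_A(f) = 1$ and $v_B(f) = 1$, interpreting any coincidences among the subscripts via the summation convention stated at the start of the section.

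With this high-secant data in hand, the result follows by substituting into Theorem~\ref{thm:v_i>2enough}. Using $A + B = q$ and $A^2 + B^2 = q^2 - 2AB$, one computes $A_f = q^2 - 2$, $B_f = q^2 - q$, and $C_f = q(q-1) - A(A-1) - B(B-1) = 2AB$, whence $v_2(f) = AB$, $v_1(f) = q^2 - q - 2AB$, and $v_0(f) = q - 2 + AB$, matching the claimed values upon substituting $A = \lceil q/2\rceil + t$ and $B = \lfloor q/2\rfloor - t$. The only genuine subtlety is the bookkeeping for small $q$ or extremal $t$, where the indices $A$, $B$ and $2$ may collide (for instance $A = B = q/2$ when $q$ is even and $t=0$); the stated convention of adding coincident entries disposes of these cases without altering the underlying computation.
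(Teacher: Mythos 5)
Your proof is correct and follows essentially the same route as the paper's: establish that $\ell$ and $m$ give the two high-secant entries $v_{\lceil q/2\rceil+t}(f)=v_{\lfloor q/2\rfloor-t}(f)=1$ with all other $v_i$ vanishing for $i\geq 3$, then recover $v_0,v_1,v_2$ from the linear relations (the paper invokes Theorem~\ref{thm:visums}, of which Theorem~\ref{thm:v_i>2enough} is just the packaged form you use). Your additional checks --- that $\ell$ and $m$ are non-vertical, that $\ell\cap m\notin S_f$, and the index-collision convention --- are details the paper leaves implicit under ``by construction,'' so you have simply made the same argument more explicit.
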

\begin{proof}
By construction, we have $v_{\lceil q/2\rceil+t}(f)=1$, $v_{\lfloor q/2\rfloor-t}(f)=1$ and $v_i=0$ for all other $i\geq 3$.  The remaining values can then be computed directly using Theorem~\ref{thm:visums}.
\end{proof}
One way to obtain polynomials giving rise to sets from this construction is to take a subset $T\subset\F_q$ with $2\leq |T|\leq q-2$ and take its {\em indicator function} $f_T$ defined by $f_T(x)=1$ if $x\in T$ and $f_T(x)=0$ if $x\notin T$.  We observe that $f_T$ can be expressesd as $f_T(x)=\sum_{t\in T}(1-(x-t)^{q-1})$.
\subsubsection{Two lines and two points}
As in Section~\ref{sec:twolinesonepoint}, here we restrict our attention to sets arising from polynomials by requiring one of the additional points to be an internal nucleus.
The monomial $x^{(q-1)/2}$, whose intersection distribution was established in \cite{LiPot}, is an example of such a set, and the difference in this distribution according to whether $q$ is equivalent to $1$ or $3$ mod $4$ shows some of the variation in behaviour that is possible in this setting.
\begin{proposition}\label{prop:twolinesandtwopoints}  Let $q$ be an odd prime power.
For $f=x^{(q-1)/2}$, the set $S_f$ consists of the points $(0,1,0)$ and $(0,0,1)$, together with $(q-1)/2$ further points on each of the lines $y=z$ and $y=-z$.  We observe that
\begin{enumerate}
    \item{
        If $q \equiv 1 \mod 4$, then $(0,1,0)$ is an internal nucleus and $v_3(f)=0$.
    }
    \item{
        If $q \equiv 3 \mod 4$, then all lines through $(0,1,0)$ are unisecants or 3-secants and $v_3(f)=(q-1)/2$ for $q\neq 7$ or $(q-1)/2+2$ when $q=7$.
    }
\end{enumerate}
\end{proposition}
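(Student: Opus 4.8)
The plan is to first pin down $S_f$ explicitly and then reduce the counting of $3$-secants to an elementary analysis of the quadratic character, exploiting the fact that the graph of $f$ occupies only three horizontal levels. First I would establish the structure via Euler's criterion: $x^{(q-1)/2}$ equals $1$ when $x$ is a nonzero square, $-1$ when $x$ is a nonzero non-square, and $f(0)=0$. Hence the affine points of $S_f$ are the origin $(0,0,1)$, the $(q-1)/2$ points $(s,1,1)$ with $s$ a nonzero square (all on $y=z$), and the $(q-1)/2$ points $(n,-1,1)$ with $n$ a nonzero non-square (all on $y=-z$); together with the point at infinity $(0,1,0)$ this gives the claimed description. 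I would note in passing that $y=z$ and $y=-z$ meet at $(1,0,0)\notin S_f$ and that neither distinguished point lies on either line, so the description is clean.

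The heart of the argument is the observation that every graph point has second coordinate in $\{-1,0,1\}$, so the graph lies on the three levels $y=-1,0,1$, with level $0$ being the single point $(0,0)$. Since $v_i$ counts only lines $y=ax+b$, I would split these into two types. For $a\neq 0$ a line meets each level in at most one point, so a $3$-secant must meet all three levels; meeting level $0$ forces $b=0$, i.e. the line passes through the origin. For such a line $y=ax$ the candidate points are $(a^{-1},1)$ and $(-a^{-1},-1)$, which both lie on the graph precisely when $a^{-1}$ is a nonzero square and $-a^{-1}$ is a non-square. By multiplicativity of the quadratic character and the fact that $-1$ is a square iff $q\equiv 1\pmod 4$, these two conditions are incompatible when $q\equiv 1\pmod 4$ (so no $3$-secant passes through the origin), and when $q\equiv 3\pmod 4$ they hold exactly for the square slopes $a$, giving precisely $(q-1)/2$ origin $3$-secants. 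The remaining type is $a=0$: each of the horizontal lines $y=\pm 1$ carries all $(q-1)/2$ points of its level, hence is a $3$-secant exactly when $(q-1)/2=3$, i.e. $q=7$.

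Assembling these settles both cases. Since $(0,1,0)$ is an internal nucleus of $S_f$ for every $f$ by Theorem~\ref{thm:sfinternalnucleus}, the substantive content concerns the second distinguished point $(0,0,1)$, whose pencil I analyze with the same level argument: a line $y=mx$ through $(0,0,1)$ meets the graph in the origin together with whichever of $(m^{-1},1)$, $(-m^{-1},-1)$ are genuine graph points. For $q\equiv 1\pmod 4$ exactly one of these occurs for every $m\neq 0$, so every line through $(0,0,1)$ (apart from the vertical $x=0$) is a unisecant or bisecant and $(0,0,1)$ is an internal nucleus; combined with the absence of origin $3$-secants and the fact that $7\equiv 3$ rules out horizontal ones, this gives $v_3(f)=0$. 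For $q\equiv 3\pmod 4$ the square slopes yield $3$-secants and the non-square slopes (and $y=0$) yield unisecants, so every line through $(0,0,1)$ other than $x=0$ is a unisecant or $3$-secant; the $(q-1)/2$ origin $3$-secants, together with the two extra horizontal lines $y=\pm1$ exactly when $q=7$, give $v_3(f)=(q-1)/2$ for $q\neq 7$ and $(q-1)/2+2$ for $q=7$.

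The main care point will be the bookkeeping of the quadratic character: correctly combining the conditions ``$a^{-1}$ is a square'' and ``$-a^{-1}$ is a non-square'' through the value of the character at $-1$, and then neither double-counting nor overlooking the two degenerate contributions, namely the vertical line $x=0$ (which is a bisecant and is excluded from $v_3$) and the exceptional field $\F_7$, where a full horizontal level happens to have size $3$. Everything else is forced by the three-level structure of the graph, so no genuinely hard estimate is needed.
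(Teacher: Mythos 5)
Your proof is correct and follows essentially the same route as the paper's: both establish the three-level structure of the graph via Euler's criterion, observe that for $q\neq 7$ any $3$-secant must pass through $(0,0,1)$ with one point on each of the lines $y=\pm z$, and then count the lines through the origin using quadratic residues, with the two horizontal lines $y=\pm 1$ contributing exactly when $q=7$. The only differences are in execution rather than substance: where the paper decides which lines $y=dx$ are $3$-secants via $\gcd\bigl(x^{q-1}-1,\,x^{(q-1)/2-1}-1\bigr)$ (Lemma~\ref{lem:roots}) followed by a scaling-by-squares argument, you evaluate the quadratic character at the two candidate points $(a^{-1},1)$ and $(-a^{-1},-1)$ directly; you also spell out the unisecant/bisecant bookkeeping behind the internal-nucleus claims and correctly read the statement's ``$(0,1,0)$'' as the second nucleus candidate $(0,0,1)$ (the point $(0,1,0)$ is an internal nucleus for every $f$ by Theorem~\ref{thm:sfinternalnucleus}), both of which the paper's proof leaves implicit.
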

\begin{proof}
Here, $f(a)=1$ if $a$ is a non-zero square in $\F_q$, $f(a)=-1$ if $a$ is a non-zero non-square, and $f(0)=0$.  Hence the points of $S_f$ other than $(0,1,0)$ are $(0,0,1)$ plus $(q-1)/2$ further points on the line $y=z$ and $(q-1)/2$ further points on the line $y=-z$. 
We observe that when $q\neq 7$, if $S_f$ has a 3-secant, then it passes through $(0,1,0)$ and contains one further point from each of the lines $y=z$ and $y=-z$.

Consider the line $y=x$. We have the equation
\[
    x^{(q-1)/2}-x=x(x^{(q-1)/2-1}-1)=0
\]
for some $c \neq 0$.  Its non-zero solutions in $\F_q$ are the roots of $\mathrm{gcd}(x^{q-1}-1,x^{(q-1)/2-1}-1)$.  By Lemma \ref{lem:roots}, $\mathrm{gcd}(x^a-1,x^b-1)=x^{\mathrm{gcd}(a,b)}-1$.  Moreover, $\mathrm{gcd}(q-1,(q-1)/2-1)$ divides $2$; specifically, it is equal to $2$ for $q \equiv 3 \mod 4$ and equal to $1$ for $q \equiv 1 \mod 4$.  So there are three solutions ($x=0,1-1$) to $x^{(q-1)/2}-x$ precisely in the case when $q \equiv 3 \mod 4$ (and two solutions when $q \equiv 1 \mod 4$).

Moreover, for any root $r$ of $x^{(q-1)/2}-x$ in $\F_q$, $cr$ is a root of $x^{(q-1)/2}-c^{-1}x$ if and only if $c$ is a non-zero square in $\F_q$. So in the case when $q \equiv 3 \mod 4$, there are precisely $(q-1)/2$ $3$-secants given by the lines $x^{(q-1)/2}-dx$ (with $d$ a non-zero square), while when $q \equiv 1 \mod 4$ there are no $3$-secants.
\end{proof}
We can generalise the cases appearing in Proposition~\ref{prop:twolinesandtwopoints} by considering sets with varying numbers of $3$-secants.
\begin{construction}\label{con:twolinestwopoints}  
Let $q$ be an odd prime power and partition $\F_q^*$ arbitrarily into sets $Q^+$and $Q^-$, where $|Q^+|=|Q^-|=(q-1)/2$, and $-x\in Q^{-}$ if and only if $x\in Q^{+}$.  
Let $\ell$ and $m$ be the lines $y=z$ and $y=-z$ of $\pg(2,q)$ respectively.  Let $0\leq t\leq \lfloor (q-1)/2\rfloor-2$, and let $c$ be an integer between $0$ and $\lfloor (q-1)/2\rfloor-t$ for which $\lceil(q-1)/2\rceil+t-c$ is even.  We construct a $(q+1)$-set $S_f$ arising from some polynomial $f\in\F_q[x]$ that has $\lceil(q-1)/2\rceil+t$ points on $\ell$ and $\lfloor (q-1)/2\rfloor-t$ points on $m$, and which has $c$ 3-secants that are distinct from $\ell$ and $m$. 
\begin{itemize}
\item We initially include the points $(0,0,1)$ and $(0,1,0)$ in $S_f$.
\item We pick $c$ elements of $Q^+$ and for each such element $\theta$ we add the points $(\theta,1,1)\in \ell$ and $(-\theta,-1,1)\in m$ to $S_f$.
\item We pick $(\lceil(q-1)/2\rceil+t-c)/2$ further elements of $Q^+$ and for each such element $\theta$ we add  the points $(\theta,1,1)\in \ell$ and $(-\theta,1,1)\in \ell$ to $S_f$. 
\item  The number of elements remaining in $Q^+$ is
\[
    \frac{q-1}{2}-c-\frac{\lceil(q-1)/2\rceil+t-c}{2}=\frac{\lfloor(q-1)/2\rfloor-t-c}{2}.
\]
For each such element $\theta$, we add the points $(\theta,-1,1)\in m$ and $(-\theta,-1,1)\in m$ to $S_f$. 
\end{itemize}
\end{construction}
\begin{theorem}\label{thm:twolinestwopoints}
The non-zero entries of the intersection distribution of the polynomial $f$ that gives rise to the $(q+1)$-set $S_f$ described in Construction~\ref{con:twolinesonepoint} are
\begin{align*}
v_{\lceil(q-1)/2)\rceil+t}(f)&=1,\\
v_{\lfloor(q-1)/2)\rfloor-t}(f)&=1,\\
v_3(f)&=c,\\
v_2(f)&=(\lceil(q-1)/2\rceil+t)(\lfloor(q-1)/2\rfloor-t)-3c+q-1,\\
v_1(f)&=1+3c+(q-2)(q-1)-2(\lceil(q-1)/2\rceil+t)(\lfloor(q-1)/2\rfloor-t),\\
v_0(f)&=2q-4-c+(\lceil(q-1)/2\rceil+t)(\lfloor(q-1)/2\rfloor-t).
\end{align*}
\end{theorem}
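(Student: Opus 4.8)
The plan is to follow the template of the preceding theorems in this section: determine the entries $v_i(f)$ for $i \geq 3$ directly from the geometry of Construction~\ref{con:twolinestwopoints}, and then recover $v_2(f), v_1(f), v_0(f)$ by a single application of Theorem~\ref{thm:v_i>2enough}. Write $A = \lceil (q-1)/2 \rceil + t$ and $B = \lfloor (q-1)/2 \rfloor - t$, so that $A + B = q-1$, and set $P = AB$ for the product appearing throughout the claimed distribution. The lines $\ell$ and $m$ pass through neither the nucleus $(0,1,0)$ nor the point $(0,0,1)$ (their intersection is $(1,0,0) \notin S_f$), so $\ell$ is an $A$-secant and $m$ is a $B$-secant, giving $v_A(f) = 1$ and $v_B(f) = 1$.

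First I would pin down the higher secants. Since $(0,1,0)$ is an internal nucleus of $S_f$, every line through it is a unisecant or a bisecant and is in any case excluded from the $v_i$; so any line counted in $v_i$ for $i \geq 3$ avoids $(0,1,0)$. Such a line, if it is neither $\ell$ nor $m$, meets each of $\ell$ and $m$ in at most one point, and the only remaining point of $S_f$ is $(0,0,1)$. Hence any such $i$-secant with $i \geq 3$ must pass through $(0,0,1)$ together with one point of $\ell$ and one point of $m$, forcing $i = 3$ and ruling out $v_i(f) \neq 0$ for $i \geq 4$ (apart from the entries $v_A, v_B$ above). This reduces the problem to counting the $3$-secants through $(0,0,1)$.

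The key computation is the correspondence that the line joining $(0,0,1)$ to the point $(a,1,1)$ of $\ell$ meets $m$ in the point $(-a,-1,1)$, i.e. the point of $m$ with $x$-coordinate $-a$; this follows by a direct parametrisation of the line and substitution into $y = -z$. Thus a $3$-secant through $(0,0,1)$ corresponds exactly to a value $a$ for which both the $\ell$-point with $x$-coordinate $a$ and the $m$-point with $x$-coordinate $-a$ lie in $S_f$. Here I would run through the three bullets of the construction: the $c$ chosen elements $\theta \in Q^+$ are selected precisely so that $(\theta,1,1)\in\ell$ and $(-\theta,-1,1)\in m$ are both included, producing $c$ such $3$-secants; whereas for the elements contributing the pairs $(\theta,1,1),(-\theta,1,1)$ to $\ell$ (resp. $(\theta,-1,1),(-\theta,-1,1)$ to $m$), the partner points needed on $m$ (resp. on $\ell$) are absent, using the partition of $Q^+$ and the symmetry $-x \in Q^- \Leftrightarrow x \in Q^+$ to check that no further coincidences occur. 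This yields $v_3(f) = c$ exactly.

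With $v_A(f) = v_B(f) = 1$, $v_3(f) = c$ and $v_i(f) = 0$ for all other $i \geq 3$ established, the proof concludes by substituting into Theorem~\ref{thm:v_i>2enough}. Using $A + B = q-1$ and $A(A-1) + B(B-1) = (q-1)^2 - 2P - (q-1)$, one computes $C_f = 2(q-1) + 2P - 6c$, $B_f = q^2 - q + 1 - 3c$ and $A_f = q^2 - c - 2$, from which the stated formulas for $v_2(f) = C_f/2$, $v_1(f) = B_f - C_f$ and $v_0(f) = A_f - B_f + C_f/2$ follow after routine simplification. I expect the main obstacle to be the bookkeeping of the previous paragraph, namely verifying that the construction produces exactly $c$ three-secants and no spurious ones, together with the caveat flagged at the start of the section: when $A$, $B$ or the value $3$ coincide as subscripts, the corresponding entries must be amalgamated rather than listed separately, and this must be recorded so that the displayed distribution is read correctly.
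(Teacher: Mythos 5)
Your proposal is correct and follows essentially the same route as the paper's proof: read off $v_{\lceil(q-1)/2\rceil+t}(f)=v_{\lfloor(q-1)/2\rfloor-t}(f)=1$, $v_3(f)=c$, and $v_i(f)=0$ for the remaining $i\geq 3$ from the construction, then apply Theorem~\ref{thm:v_i>2enough}. The only difference is that you explicitly verify the geometric bookkeeping (that every $3$-secant passes through $(0,0,1)$ and that the partition of $Q^+$ yields exactly $c$ of them, with no spurious coincidences), which the paper's one-line proof leaves implicit under the phrase ``by construction''; your computations of $A_f$, $B_f$, $C_f$ and the resulting entries all check out against the stated distribution.
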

\begin{proof}
By construction, we have $v_{\lceil(q-1)/2\rceil+t}(f)=1$, $v_{\lfloor(q-1)/2\rfloor-t}(f)=1$, $v_3=c$, and $v_i\geq 0$ for all other $i\geq 3$. The result follows from Theorem~\ref{thm:v_i>2enough}.
\end{proof}

\subsection{The non-hitting spectrum}\label{subsec:spec}
Given a prime power $q$, Li and Pott define its {\em non-hitting spectrum} $\spec(q)$ to be the set of values of the non-hitting index $u_0(S)$ attained by $(q+1)$-sets $S$ in $\pg(2,q)$.  They give the spectra for $q=2,3,5$, and determine the five smallest and three largest values for $\spec(7)$.  Furthermore, they determine that the smallest entries of $\spec(q)$ are $0,q-1,2q-4,2q-3$, and show that sets with those non-hitting indices are uniquely characterised by their non-hitting index. In this section, we find the next 5 values of the spectrum, and show that they are no longer uniquely characterised by their non-hitting index.  We then completely determine $\spec(q)$ for $q=7,8,9$.

Li and Pott's Theorem 2.12 in \cite{LiPot} gives a lower bound on the non-hitting index of a $(q+1)$-set of degree $n$, which can be written as
\begin{align}
    u_0\geq(n-1)(q+1-n).\label{eq:lipottbound}
\end{align}
The non-hitting indices of the sets obtained by our Construction~\ref{con:twolines2} 
are tight with respect to this bound, and occur for any degree that is greater than $\lceil q/2\rceil$.  This shows that the bound \eqref{eq:lipottbound} is tight for degrees $\lceil q/2\rceil+1\leq n\leq q+1$.  However, for small degrees this is not the case in general, and it is possible to obtain stronger lower bounds. We first prove a lemma concerning sets of small degree.

\begin{lemma}\label{lem:u_0expressions}
Let $S$ be a $(q+1)$-set of $\pg(2,q)$. 
\begin{itemize}
\item[(a)] If $S$ has degree at most $3$, then
\begin{itemize}
\item[(i)] $u_1 = (3q^2-q+2)/2-3u_0$;
\item[(ii)] $ u_2 = 3u_0-q(q-2)$;
\item[(iii)] $u_3 = q(q-1)/2-u_0$.
\end{itemize}
\item[(b)] If $S$ has degree at most $4$ and $u_3(S)=c$, then
\begin{itemize}
\item[(i)] $u_1 = \frac{1}{3}( 4q^2-q+3+c-8u_0)$;
\item[(ii)] $u_2=2u_0-c-q(q-3)/2$;
\item[(iii)] $u_4=\frac{1}{3}(q(q-1)/2-c-u_0)$.
\end{itemize}
\end{itemize}
\end{lemma}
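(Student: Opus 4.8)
The plan is to derive both parts directly from the three counting identities of Theorem~\ref{thm:uisums}, exploiting the fact that a bound on the degree forces all but finitely many of the $u_i$ to vanish. Writing $n$ for the degree of $S$, recall that these identities read $\sum_i u_i = q^2+q+1$, $\sum_i i\,u_i = (q+1)^2$, and $\sum_i i(i-1)u_i = q(q+1)$.

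For part (a), the hypothesis $n\le 3$ means $u_i=0$ for every $i\ge 4$, so the three identities collapse to a linear system in the four unknowns $u_0,u_1,u_2,u_3$. Treating $u_0$ as a parameter, I would move it to the right-hand side and solve the resulting $3\times 3$ system for $u_1,u_2,u_3$. Concretely, the third identity becomes $2u_2+6u_3=q(q+1)$, which relates $u_2$ and $u_3$; substituting into the first two identities and eliminating yields each of $u_1,u_2,u_3$ as an affine function of $u_0$, and carrying out the elimination produces exactly the stated formulas.

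For part (b), the hypothesis $n\le 4$ together with $u_3=c$ means $u_i=0$ for $i\ge 5$, so the three identities become a linear system in $u_0,u_1,u_2,u_4$ in which $c$ enters as a known constant, contributing $c$, $3c$, and $6c$ to the three identities respectively. Treating both $u_0$ and $c$ as parameters and transposing their contributions to the right-hand side, I would solve the $3\times 3$ system for $u_1,u_2,u_4$ by the same elimination, obtaining the claimed formulas.

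The key observation that makes each solve well-posed is that, after fixing the parameters, there are exactly three identities and exactly three remaining unknowns; the only thing to verify is that the coefficient matrix is nonsingular, so that the solution is forced. This holds since the relevant matrices are small Vandermonde-type matrices with nonzero determinant (namely $2$ in part (a) and $6$ in part (b)). Consequently there is no genuine obstacle here: the work is purely the routine elimination, and the result is, as asserted, immediate from Theorem~\ref{thm:uisums}.
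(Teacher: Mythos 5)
Your proof is correct and is essentially the paper's argument: the paper deduces the lemma from Theorem~\ref{thm:u_i>2enough}, which is itself just the three counting identities of Theorem~\ref{thm:uisums} repackaged, so both routes amount to the same linear elimination (and your formulas and the determinant values $2$ and $6$ all check out). The only cosmetic difference is that the paper inverts the pre-packaged expressions for $u_0,u_1,u_2$ in terms of the higher $u_i$, whereas you solve the $3\times 3$ system for $u_1,u_2,u_3$ (resp.\ $u_1,u_2,u_4$) directly with $u_0$ (and $c$) as parameters.
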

\begin{proof}
This follows from Theorem~\ref{thm:u_i>2enough}.
\end{proof}

\begin{theorem}\label{thm:betterbound}
Let $S$ be a $(q+1)$-set of $\pg(2,q)$. If $S$ has degree at most $3$, then
\begin{align*}
u_0(S)\geq \frac{q(q-2)}{3}.
\end{align*}
If $S$ has degree at most $4$ and $u_3(S)=c$, then
\begin{align*}
u_0(S)\geq \frac{q^2-3q+2c}{4}.   
\end{align*}
\end{theorem}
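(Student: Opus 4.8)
The plan is to exploit the explicit formulas for $u_2(S)$ supplied by Lemma~\ref{lem:u_0expressions}, combined with the elementary but decisive fact that every entry of an intersection distribution is non-negative, since each $u_i(S)$ literally counts lines of $\pg(2,q)$. The key realisation driving the whole argument is that both lower bounds we seek are nothing other than the constraint $u_2(S)\geq 0$ rewritten. Thus the substantive work has already been carried out in deriving the relations of Theorem~\ref{thm:u_i>2enough}, from which Lemma~\ref{lem:u_0expressions} follows; the present statement is then a one-step consequence.

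For the degree-at-most-$3$ case, I would invoke Lemma~\ref{lem:u_0expressions}(a)(ii), which gives $u_2(S)=3u_0-q(q-2)$. Since $u_2(S)$ is a count of bisecants, we have $u_2(S)\geq 0$, and rearranging this inequality immediately yields $u_0(S)\geq q(q-2)/3$, as required.

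For the degree-at-most-$4$ case with $u_3(S)=c$, I would instead apply Lemma~\ref{lem:u_0expressions}(b)(ii), namely $u_2(S)=2u_0-c-q(q-3)/2$. Again imposing $u_2(S)\geq 0$ gives $2u_0\geq c+q(q-3)/2$, and a short rearrangement produces $u_0(S)\geq (q^2-3q+2c)/4$. The only verification needed is the routine identity $\tfrac{c}{2}+\tfrac{q(q-3)}{4}=\tfrac{q^2-3q+2c}{4}$.

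Because both bounds reduce to the non-negativity of a single intersection-distribution entry, there is no genuine obstacle to overcome here; the argument is entirely mechanical once Lemma~\ref{lem:u_0expressions} is available. It is worth remarking, however, \emph{why} this approach succeeds: the three linear relations of Theorem~\ref{thm:uisums} determine $u_0,u_1,u_2$ (and, in the degree-$4$ case, $u_4$) as affine functions of $u_0$ alone once the higher-secant data is fixed, so that the positivity of the lowest-indexed ``free'' entry $u_2$ translates directly into a lower bound on $u_0$. This is precisely the mechanism by which the sparse structure of low-degree sets forces the non-hitting index to be large.
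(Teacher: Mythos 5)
Your proof is correct and follows exactly the same route as the paper's: both arguments combine the non-negativity of $u_2(S)$ with the expressions for $u_2$ in Lemma~\ref{lem:u_0expressions}(a)(ii) and (b)(ii), then rearrange. The only difference is cosmetic—the paper additionally remarks on when the degree-$3$ bound is tight (namely $u_2=0$, forcing a Steiner triple system structure), but this is not needed for the bounds themselves.
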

\begin{proof}
By definition, each $u_i$ is a non-negative integer.   The result follows by combining the inequality $u_2 \geq 0$ with the expressions for $u_2$ from Lemma  \ref{lem:u_0expressions}(a) and (b).  We observe that in the degree 3 case the bound is tight if and only if $u_2=0$, in which case the points of $S$ together with the set of triples of points occurring on 3-secants forms a Steiner triple system.  In particular, this would require $q$ to be even.
\end{proof}

Li and Pott observe that a $(q+1)$-set attains $u_0=2q-4$ if and only if $n=q-1$ and it consists of $q-1$ points on a line $\ell$, together with two further points on a line $m$ that intersects $\ell$ in a point of the set.  It attains $u_0=2q-3$ if and only if $n=q-1$ and it consists of $q-1$ points on a line $\ell$, together with two further points on a line $m$ that intersects $\ell$ in a point that is not part of the set.  This is the largest value of $u_0$ attainable by a $(q+1)$-set of degree at most $q-1$.  
\begin{lemma}\label{lem:degq-2}
For $q>7$, there is no $(q+1)$-set $S$ with $2q-2\leq u_0(S)\leq 3q-10$.
\end{lemma}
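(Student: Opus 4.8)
The plan is to split into cases according to the degree $n$ of $S$ and to show in each case that $u_0(S)$ lies strictly outside the interval $[2q-2,\,3q-10]$. The organising observation is that the lower bound \eqref{eq:lipottbound}, namely $u_0\geq (n-1)(q+1-n)$, is already strong enough on its own for the ``middle'' degrees. Writing $g(n)=(n-1)(q+1-n)$, note that $g$ is concave in $n$, so on any integer interval its minimum is attained at an endpoint. Over $4\leq n\leq q-2$ both endpoints give $g(4)=g(q-2)=3q-9$, so every such set satisfies $u_0\geq 3q-9>3q-10$ and hence lies \emph{above} the interval. This disposes of all degrees with $4\leq n\leq q-2$ in a single step.

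Next I would treat the two low degrees $n=2$ and $n=3$, for which \eqref{eq:lipottbound} alone is too weak (it yields only $q-1$ and $2q-4$ respectively, both below $3q-10$). For $n=2$ the set is a $(q+1)$-arc, and the purely combinatorial count of Example~\ref{ex:oddconic} gives the exact value $u_0=q(q-1)/2$; this exceeds $3q-10$ because $q(q-1)/2-(3q-10)=\tfrac12(q^2-7q+20)>0$ for all $q$. For $n=3$ I would invoke the sharper bound of Theorem~\ref{thm:betterbound}, which gives $u_0\geq q(q-2)/3$; this exceeds $3q-10$ since $q(q-2)/3-(3q-10)=\tfrac13(q-5)(q-6)>0$ for $q>7$. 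Thus degrees $2$ and $3$ also lie above the interval.

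It then remains to handle the three high degrees $n\in\{q-1,q,q+1\}$, where the structure of $S$ is forced and $u_0$ lies \emph{below} the interval. A degree-$(q+1)$ set is a full projective line, with $u_0=0$; a degree-$q$ set consists of $q$ collinear points together with one further point, with $u_0=q-1$; both values are $<2q-2$ for $q>7$. For $n=q-1$ the set consists of $q-1$ points on a line $\ell$ together with two points off $\ell$, and the only freedom is whether the line joining the two exterior points meets $\ell$ inside $S$ or not; a direct application of Theorem~\ref{thm:u_i>2enough} (matching the characterisation recalled just before this lemma) gives $u_0=2q-4$ or $u_0=2q-3$ accordingly, both $<2q-2$. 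Combining the three parts shows that no degree yields $u_0\in[2q-2,\,3q-10]$, proving the lemma.

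I would expect the main obstacle to be precisely the small-degree cases $n=2,3$: here the generic bound \eqref{eq:lipottbound} is insufficient, and one must substitute exact information (the arc count of Example~\ref{ex:oddconic}) or the degree-specific refinement of Theorem~\ref{thm:betterbound}. The degree-$(q-1)$ case is the other delicate point, where one should verify that the two listed configurations are the \emph{only} possibilities (any line other than $\ell$ meets $S$ in at most three points, so the $(q-1)$-secant is the unique top secant once $q-1>3$) and then evaluate $u_0$ exactly via the sum identities rather than relying on a verbal description.
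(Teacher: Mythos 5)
Your proof is correct and takes essentially the same route as the paper's: split by degree, use the Li--Pott bound \eqref{eq:lipottbound} on the middle range $4\leq n\leq q-2$ (whose concave minimum at the endpoints gives $u_0\geq 3q-9$), use Theorem~\ref{thm:betterbound} for small degree, and use the forced structure of sets of degree at least $q-1$ to get $u_0\leq 2q-3$. The only cosmetic difference is that you handle $n=2$ separately via the exact arc count of Example~\ref{ex:oddconic}, whereas the paper covers all degrees at most $3$ uniformly with Theorem~\ref{thm:betterbound}.
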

\begin{proof}
If a $(q+1)$-set has degree $q-1$ or greater, then it satisfies $u_0\leq 2q-3$.  If it has degree $n$ with $q-2\geq n>3$, then by \eqref{eq:lipottbound} it satisfies $u_0\geq (q-3)\cdot 3=3q-9$.

By Theorem~\ref{thm:betterbound}, we have $u_0\geq q(q-2)/3$ for sets of degree at most $3$.  We observe that
\[
    \frac{q(q-2)}{3}-(3q-9)=\frac{q^2-11q+27}{3}.
\]
Now for $q>(11+\sqrt{13})/2\approx 7.3$, this expression is positive, i.e. no set of degree at most 3 has a non-hitting index less than $3q-9.$
\end{proof}
\begin{lemma}
A $(q+1)$-set $S$ in $\pg(2,q)$ of degree $q-2$ has $u_0(S)\in\{3q-9,3q-8,3q-7,3q-6\}$.
\end{lemma}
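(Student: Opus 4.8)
The plan is to exploit the rigidity forced by a $(q-2)$-secant and reduce everything to counting external lines. Since $\deg(S)=q-2$, there is a $(q-2)$-secant $\ell$; it carries $q-2$ points of $S$, so the remaining three points $P_1,P_2,P_3$ of $S$ lie off $\ell$. Writing $\{X_1,X_2,X_3\}=\ell\setminus S$ for the three ``holes'' of $\ell$ (of its $q+1$ points, exactly $q-2$ lie in $S$), I would first note that this configuration is consistent with degree $q-2$: any line other than $\ell$ meets $\ell$ in a single point and hence meets $S$ in at most $1+3=4$ points, so for $q$ not too small $\ell$ is the unique secant of maximal size.

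The heart of the argument is a direct evaluation of $u_0(S)$. Any external line must avoid all $q-2$ points of $S$ on $\ell$; since a line $\neq\ell$ meets $\ell$ in exactly one point, an external line other than $\ell$ must pass through one of the holes $X_j$ and miss $P_1,P_2,P_3$, and conversely any such line is external. As the three holes lie on $\ell$, the only line joining two of them is $\ell$ itself, so the three pencils (with $\ell$ removed) are disjoint and no external line is double-counted. Through $X_j$ there are $q$ lines other than $\ell$, and those meeting $\{P_1,P_2,P_3\}$ are precisely the lines $X_jP_1,X_jP_2,X_jP_3$; letting $d_j$ be the number of \emph{distinct} such lines, I obtain
\[
u_0(S)=\sum_{j=1}^3 (q-d_j)=3q-(d_1+d_2+d_3).
\]

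It then remains to bound $d_1+d_2+d_3$. Each $d_j\le 3$ gives $u_0\ge 3q-9$ (this recovers \eqref{eq:lipottbound} with $n=q-2$). For the upper bound on $u_0$ I would show $d_1+d_2+d_3\ge 6$, i.e. that the total defect $\sum_j(3-d_j)$ is at most $3$, by splitting on the collinearity of $P_1,P_2,P_3$. If the $P_i$ form a triangle, a hole $X_j$ lies on at most one of the three sides $P_iP_k$ (two sides meet only at a vertex, which lies off $\ell$), so $3-d_j\le 1$ and the total defect is at most $3$. If instead the $P_i$ are collinear on a line $m$, then $d_j=1$ exactly when $X_j\in m$ and $d_j=3$ otherwise; as $m\cap\ell$ is a single point, at most one hole lies on $m$, so the total defect is at most $2$. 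In both cases $d_1+d_2+d_3\ge 6$, whence $u_0\le 3q-6$, and combining with the lower bound yields $u_0(S)\in\{3q-9,3q-8,3q-7,3q-6\}$.

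I expect the final case analysis to be the only real subtlety: the whole bound hinges on the elementary but essential facts that two holes are never joined by a line other than $\ell$, that a single hole absorbs at most one coincidence in the triangle case, and that $m$ meets $\ell$ only once in the collinear case. Everything else is routine bookkeeping.
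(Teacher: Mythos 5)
Your proof is correct, and it takes a genuinely different route from the paper's. The paper's proof enumerates the possible structures of a degree-$(q-2)$ set---the three points off the $(q-2)$-secant $\ell$ either lie on a line $m$ with $\ell\cap m$ in the set, or on a line $m$ with $\ell\cap m$ not in the set, or form a triangle with $0,1,2$ or $3$ sides meeting $\ell$ in a point of the set---and reads off $u_0$ in each case from previously established machinery (Theorems~\ref{thm:twolines1}, \ref{thm:twolines2} and~\ref{thm:u_i>2enough}). You instead count external lines directly: each one meets $\ell$ in one of the three holes, the three pencils with $\ell$ removed are disjoint, and exactly $q-d_j$ lines of the pencil through $X_j$ are external, giving the closed formula $u_0=3q-(d_1+d_2+d_3)$; your collinear/triangle case split then bounds $6\le d_1+d_2+d_3\le 9$. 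Your argument is more elementary and self-contained (no appeal to the double-counting identities behind Theorem~\ref{thm:u_i>2enough}), and your defect bookkeeping in fact pins down the exact value in every configuration, recovering the paper's case list. What the paper's route buys is the full intersection distribution in each case, not merely $u_0$, which is information it uses elsewhere. One point worth recording: you assign $u_0=3q-9$ to the collinear configuration with $\ell\cap m\in S$ and $u_0=3q-7$ to the one with $\ell\cap m\notin S$, and this is the correct assignment (Theorem~\ref{thm:twolines1}, the construction that excludes $\ell\cap m$, gives $u_0=AB-1=3(q-2)-1=3q-7$); the paper's proof as printed states these two values, and the accompanying theorem citations, the other way around---a harmless slip that does not change the attainable set $\{3q-9,3q-8,3q-7,3q-6\}$.
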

\begin{proof}
The possible structures for a $(q+1)$-set in $\pg(2,q)$ of degree $q-2$ are:
\begin{itemize}
\item $q-2$ points on a line $\ell$ and 3 further points on a line $m$ that meets $\ell$ in a point of the set ($u_0=3q-7$ by Theorem~\ref{thm:twolines1});
 \item $q-2$ points on a line $\ell$ and 3 further points on a line $m$ that meets $\ell$ in a point not in the set ($u_0=3q-9$ by Theorem~\ref{thm:twolines2});
\item $q-2$ points on a line $\ell$ plus a  triangle whose points do not lie on $\ell$, where $0,1,2$ or $3$ of the sides of the triangle intersect $\ell$ in a point of the set.  ($u_0=3q-6,3q-7,3q-8,3q-9$ by Theorem~\ref{thm:u_i>2enough}).\qedhere
\end{itemize}
\end{proof}
We observe that $u_0=3q-7$ and $u_0=3q-9$ can each be obtained by sets where the three points not on the line $\ell$ are either collinear or not, showing that these non-hitting indices do not uniquely characterise the corresponding set.  
\begin{lemma}
For $q\geq 16$, there is no $(q+1)$-set $S$ in $\pg(2,q)$ with $3q-5\leq u_0(S)\leq 4q-17$.
\end{lemma}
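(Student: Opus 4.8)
The plan is to mirror the case analysis of Lemma~\ref{lem:degq-2}, splitting according to the degree $n$ of $S$ and showing that in every case $u_0(S)$ is forced either strictly below $3q-5$ or strictly above $4q-17$, so that no value in the closed interval $[3q-5,4q-17]$ can occur. The degrees $n\in\{2,\dots,q+1\}$ partition as $\{2,3\}\cup\{4\}\cup\{5,\dots,q-3\}\cup\{q-2\}\cup\{q-1,q,q+1\}$, and I would handle these five blocks in turn.

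First I would dispose of the extremes that land \emph{below} the gap. For $n\geq q-1$ the classification recalled above gives $u_0(S)\leq 2q-3$, which is less than $3q-5$ for $q>2$. For $n=q-2$ the preceding lemma shows $u_0(S)\in\{3q-9,3q-8,3q-7,3q-6\}$, so in particular $u_0(S)\leq 3q-6<3q-5$. Thus both of these blocks sit strictly below the interval.

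Next I would treat the degrees that land \emph{above} the gap. For $5\leq n\leq q-3$ the Li--Pott bound \eqref{eq:lipottbound} gives $u_0(S)\geq(n-1)(q+1-n)$; since $(n-1)(q+1-n)$ is concave in $n$, its minimum over $[5,q-3]$ is attained at an endpoint, and both $n=5$ and $n=q-3$ give $(n-1)(q+1-n)=4q-16$, so $u_0(S)\geq 4q-16>4q-17$ throughout. The remaining small degrees $n\leq 4$ are where \eqref{eq:lipottbound} is too weak and I would invoke the sharper estimates of Theorem~\ref{thm:betterbound} instead. For $n\leq 3$ that theorem gives $u_0(S)\geq q(q-2)/3$, and $q(q-2)/3>4q-17$ is equivalent to $q^2-14q+51>0$, which holds for every $q$ since its discriminant is negative. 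For $n=4$, writing $c=u_3(S)\geq 0$, Theorem~\ref{thm:betterbound} gives $u_0(S)\geq(q^2-3q+2c)/4\geq q(q-3)/4$, and $q(q-3)/4>4q-17$ reduces to $q^2-19q+68>0$, which holds for $q\geq 15$ and hence for all $q\geq 16$.

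The main obstacle is precisely the degree-$4$ case. Here the elementary parabola bound \eqref{eq:lipottbound} only certifies $u_0\geq 3q-9$, which lies \emph{below} the gap and is therefore useless; the argument genuinely depends on the refined inequality of Theorem~\ref{thm:betterbound} (equivalently, on $u_2\geq 0$ together with the counting identities of Lemma~\ref{lem:u_0expressions}). This degree is also where the hypothesis $q\geq 16$ enters, since $q^2-19q+68$ is the binding constraint and $q=16$ comfortably exceeds its larger root. Having shown that every $(q+1)$-set satisfies $u_0(S)<3q-5$ or $u_0(S)>4q-17$, I would conclude that no $(q+1)$-set has non-hitting index in $[3q-5,4q-17]$.
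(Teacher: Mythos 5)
Your proposal is correct and follows essentially the same route as the paper: split by degree, use the earlier classifications for degrees $\geq q-2$ (which put $u_0$ strictly below $3q-5$), the bound \eqref{eq:lipottbound} for $5\leq n\leq q-3$ (giving $u_0\geq 4q-16$), and Theorem~\ref{thm:betterbound} for degree at most $4$, which is exactly where the hypothesis $q\geq 16$ enters. Your write-up is merely more explicit than the paper's terse proof---separating the degree-$4$ case and verifying the quadratic inequalities in detail---but the underlying argument is identical.
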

\begin{proof}
By \eqref{eq:lipottbound}, $4q-16$ is the smallest possible non-hitting index for a set of degree at least $q-3$, and sets of degree greater than $4$ do not attain smaller values of $u_0$.  By Theorem~\ref{thm:betterbound}, a set of degree at most $4$ has $u_0\geq q(q-3)/4$, which is greater than $4q-16$ when $q \geq 16$. 
\end{proof}
Thus, we see that when $q\geq 7$, the first five entries of $\spec(q)$ are
\[
    0,q-1,2q-4,2q-3,3q-9,
\]
and when $q\geq 16$, the first nine entries are
\[
    0,q-1,2q-4,2q-3,3q-9,3q-8,3q-7,3q-6,4q-16.
\]

We now turn our attention to $q=7,8,9$.  The constructions presented in Section~\ref{sec:smallunion} provide a range of values for $\spec(q)$.  We summarise this contribution in Table~\ref{tab:specoddq} for odd $q$ and Table~\ref{tab:specevenq} for even $q$.

\begin{table}[h]
\begin{align*}
\begin{array}{ccc}
\hline
\text{construction}&v_0&t\\
\hline
\ref{con:twolines1},\ref{con:twolinesonepoint}&\left(\frac{q+1}{2}\right)^2-1-t^2&\{0,\dotsc, \frac{q-3}{2}\}\\
\ref{con:twolines2}&\left(\frac{q+1}{2}\right)^2-\frac{q+1}{2}-t-t^2&\{0,\dotsc,\frac{q-3}{2}\}\\
\ref{con:twolinesonepoint2}&\left(\frac{q+1}{2}\right)^2-1-t-t^2&\{0,\dotsc, \frac{q-5}{2}\}\\
\ref{con:twolinestwopoints}&\left(\frac{q+1}{2}\right)^2+q-4-a-t^2&\{0,\dotsc,\frac{q-5}{2}\}\\
&&a\leq \frac{q-1}{2}-t,\ \frac{q-1}{2}+t-a\text{ is even}\\
\hline
\end{array}
\end{align*}
\caption{Contributions to $\spec(q)$ from constructions in Section~\ref{sec:smallunion}
when $q$ is odd.}\label{tab:specoddq}
\end{table}
\begin{table}[h]
\begin{align*}
\begin{array}{ccc}
\hline
\text{construction}&v_0&t\\
\hline
\ref{con:twolines1},\ref{con:twolinesonepoint}&\left(\frac{q}{2}\right)^2+\frac{q}{2}-t-t^2-1&\{0,\dotsc,\frac{q}{2}-2\}\\
\ref{con:twolines2}&\left(\frac{q}{2}\right)^2-t^2&\{0,\dotsc,\frac{q}{2}-1\}\\
\ref{con:twolinesonepoint2}&q-2+\left(\frac{q}{2}\right)^2-t^2&\{0,\dotsc,\frac{q}{2}-2\}\\
\hline
\end{array}
\end{align*}
\caption{Contributions to $\spec(q)$ from constructions in Section~\ref{sec:smallunion}
when $q$ is even.}\label{tab:specevenq}
\end{table}  

By using the FinInG package in GAP \cite{fining} to generate random $(q+1)$-sets that contain the fundamental quadrangle and compute their intersection distribution, we were able to completely determine $\spec(7)$, and find additional entries of $\spec(8)$ and $\spec(9)$ (see Table~\ref{tab:spectra}).  The bold entries in the spectra for $q=7,8,9$ presented in Table~\ref{tab:spectra} correspond to values obtained by sets described in Section~\ref{sec:smallunion}.

\begin{table}
    \begin{align*}
        \begin{array}{cll}
        \hline
        q&\spec(q)&\\
        \hline
        2&0,1&\cite{LiPot}\\
        3&0,2,3&\cite{LiPot}\\
        4&0,3,4,5,6&\cite{LiPot}\\
    5&0,4,6,7,8,9,10&\cite{LiPot}\\
    7&0,\mathbf{6},\mathbf{10},\mathbf{11},\mathbf{12},\mathbf{13},\mathbf{14},\mathbf{15},\mathbf{16},17,\mathbf{18},19,21&\cite{LiPot}, \text{current work}\\
    8&\mathbf{0},\mathbf{7},\mathbf{12},\mathbf{13},\mathbf{15},\mathbf{16},\mathbf{17},\mathbf{18},\mathbf{19},20,\mathbf{21},\mathbf{22},23,24,25,28&\text{current work}\\
    9 & \mathbf{0},\mathbf{8},\mathbf{14},\mathbf{15},\mathbf{18},19,\mathbf{20},21,\mathbf{22},\mathbf{23},\mathbf{24},25,\mathbf{26},27,\mathbf{28},29,\mathbf{30},31,32,33,36&\text{current work}\\
        \hline
        \end{array}
    \end{align*}
    \caption{The non-hitting spectra of small prime powers.  Numbers in bold arise from sets described in Section~\ref{sec:smallunion}, other values arise from \cite{LiPot} or were found through computation.}
    \label{tab:spectra}
\end{table}
In the case of $q=9$, Theorem 2.9 of Li and Pott implies that a $10$-set that is not a $10$-arc has size at most $34$, and only attains that size if it contains a maximal $8$-arc.  Up to projective equivalence, there are two distinct $8$-arcs in $\pg(2,9)$ \cite{Hirschfeld}.  By starting with each of these arcs in turn and checking all ways of adding two additional points, we were able to rule out the existence of $10$-sets with non-hitting index $34$ in $\pg(2,9)$.
We have therefore accounted for every attainable value of non-hitting index with $q=7$, $q=8$ and $q=9$.

Our experiments showed that values in the middle of the spectrum were comparatively easy to generate using random examples, which is not surprising.

\subsection{Understanding cubics via geometric and algebraic viewpoints}

We end by showing how the geometric and algebraic viewpoints outlined in this paper provide elegant ways to understand the situation for cubic polynomials, providing alternative approaches to that of \cite{KyuLiPot}.  As a result of Theorem \ref{thm:v_i>2enough}, for a cubic $f$ it is sufficient to evaluate just one of $\{v_0,v_1,v_2,v_3\}$; the rest of the intersection distribution follows.

\subsubsection{An approach using known results for plane cubic curves}
The techniques described in this section involve applying standard results relating to algebraic curves.  We introduce just enough terminology to describe the approach, but refer the interested reader to \cite{Hirschfeld} for a fuller explanation of the relevant concepts.

The set $S_f$, where $f=a_3x^3+a_2x^2+a_1x+a_0\in\F_q[x]$ with $a_3\neq 0$, can be viewed as the set of rational points of the cubic curve described by the homogeneous equation
\begin{align*}
-yz^2+a_3x^3+a_2x^2z+a_1xz^2+a_0z^3=0.    
\end{align*}
Let $F\in \F_q[x,y,z]$ denote the homogeneous cubic $-yz^2+a_3x^3+a_2x^2z+a_1xz^2+a_0z^3$, and let $\mathcal{C}_F$ denote the corresponding curve.  The polynomial $F$ is irreducible over the algebraic closure of $\F_q$, hence $\mathcal{C}_F$ can be said to be {\em absolutely irreducible}.

We observe that \begin{align*}
\frac{\partial F}{ \partial x}&=3a_3x^2+2a_2xz+a_1z^2,\\
\frac{\partial F}{\partial y}&=-z^2,\\
\frac{\partial F}{\partial z}&=-2yz+a_2x^2+2a_1xz+3a_0z^3.
\end{align*}
These (formal) partial derivatives all vanish at the point $(0,1,0)\in \mathcal{C}_F$, hence this is a {\em singular point} of $\mathcal{C}_F$; the other points of $\mathcal{C}_F$ are non-singular.  In particular, $(0,1,0)$ is a {\em cusp} of $\mathcal{C}_F$, as $F$ has no terms involving $y^3$ or $y^2$,  and the only term involving $y$ is a scalar multiple of $y$ times the square $z^2$ \cite{Hirschfeld}.
\begin{lemma}
\begin{itemize}
\item When $p\neq 3$, for every cubic polynomial $f\in\F_{p^m}[x]$ the set $S_f$ is projectively equivalent to the set of rational points of the cubic curve with equation 
\begin{align*}
-yz^2+x^3=0.
\end{align*}
\item For a cubic polynomial $f\in \F_{3^m}[x]$, there are two projectively inequivalent possibilities for the set $S_f$: it is either projectively equivalent to the set of rational points of the cubic with equation
\begin{align*}
-yz^2+x^3=0,    
\end{align*}
or it is projectively equivalent to the set of rational points of the curve with equation
\begin{align*}
    -yz^2+x^3+x^2z=0.
\end{align*}
\end{itemize}
\end{lemma}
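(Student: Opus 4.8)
The plan is to use Theorem~\ref{thm:equiv} (with $\sigma=\mathrm{id}$) to put an arbitrary cubic $f=a_3x^3+a_2x^2+a_1x+a_0$, $a_3\neq 0$, into normal form by matching coefficients in $g=ef(ax+b)+cx+d$, and then to exhibit a projective invariant separating the two characteristic-$3$ forms. First I would expand
\[
f(ax+b)=a_3a^3x^3+a^2(3a_3b+a_2)x^2+(\cdots)x+(\cdots),
\]
and read off the coefficients of $g$: the coefficient of $x^3$ is $ea_3a^3$, that of $x^2$ is $ea^2(3a_3b+a_2)$, and the free parameters $c,d$ can be used to cancel the resulting linear and constant coefficients independently. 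When $p\neq 3$ I would complete the cube: choose $b=-a_2/(3a_3)$ (legitimate since $3a_3\neq0$) to kill the quadratic term, take $a=1$ and $e=a_3^{-1}$, and then solve for $c,d$, obtaining $g=x^3$; homogenising as in the preamble gives the curve $-yz^2+x^3=0$.

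In characteristic $3$ the substitution $x\mapsto ax+b$ leaves the quadratic coefficient equal to $a^2a_2$, since $(ax+b)^3=a^3x^3+b^3$ contributes nothing in degree $2$; thus whether $a_2$ vanishes is invariant under the available shifts and can only be rescaled, which forces a case split. If $a_2=0$ I would reduce to $x^3$ exactly as above. If $a_2\neq0$ I would normalise the two leading coefficients simultaneously by solving $ea_3a^3=1$ and $ea_2a^2=1$, which give $a=a_2/a_3$ and $e=a_3^2a_2^{-3}$; after this $c,d$ clear the lower terms and $g=x^3+x^2$, corresponding to $-yz^2+x^3+x^2z=0$.

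The main obstacle is the final claim that in characteristic $3$ these two forms are genuinely projectively inequivalent; I would settle it using the intersection distribution, which is a projective invariant, by comparing the number of bisecants (equivalently $v_2$). For $f=x^3$ I would argue that $x^3-ax-b$ can never have exactly two distinct roots: its derivative is the constant $-a$, so for $a\neq0$ the polynomial is separable and therefore has $0$, $1$ or $3$ roots, while for $a=0$ it equals $(x-b^{1/3})^3$ and has a single root; hence $v_2(x^3)=0$. For $f=x^3+x^2$, by contrast, the horizontal line $y=0$ already meets $S_f$ in exactly the two points $(0,0,1)$ and $(-1,0,1)$, since $x^3+x^2=x^2(x+1)$, and more generally $(x-r)^2\big(x-(r-1)\big)$ (whose quadratic coefficient is $1$ in characteristic $3$) realises a whole family of bisecants. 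Thus $v_2(x^3+x^2)\geq1$, the intersection distributions differ, and the two sets cannot be projectively equivalent. I expect the reduction steps to be routine coefficient bookkeeping, with the only real content being this bisecant count; a minor point to verify along the way is that $S_f$ really is the full rational point set of the homogenised cubic, which follows from the discussion opening this subsection.
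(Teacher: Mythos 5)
Your proof is correct, and it takes a genuinely different route from the paper's. The paper's proof is essentially a citation: having set up, in the preamble of this subsection, that $S_f$ is the full rational point set of an absolutely irreducible plane cubic $\mathcal{C}_F$ with a cusp at $(0,1,0)$, it appeals to the classification of cuspidal plane cubics (Lemma 11.24 of \cite{Hirschfeld}) to read off the two normal forms after a suitable change of variables. You instead stay entirely inside the paper's own machinery: the substitutions $g=ef(ax+b)+cx+d$ of Theorem~\ref{thm:equiv} let you complete the cube when $p\neq 3$, and match the cubic and quadratic coefficients when $p=3$ and $a_2\neq 0$, reducing every cubic to $x^3$ or $x^3+x^2$; your coefficient bookkeeping (including $a=a_2/a_3$, $e=a_3^2a_2^{-3}$, and the observation that in characteristic $3$ the substitution $x\mapsto ax+b$ cannot create or destroy the $x^2$-term) is accurate. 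For the inequivalence in characteristic $3$ you rightly do \emph{not} rely on that last observation --- it only gives invariance under the transformations of Theorem~\ref{thm:equiv}, which need not exhaust all projective equivalences when $S_f$ has several internal nuclei (cf.\ Theorem~\ref{thm:extraequiv}) --- but instead use the genuine projective invariant $v_2$: your root count ($x^3-ax-b$ is separable for $a\neq 0$, hence has $0$, $1$ or $3$ roots, and is the cube $(x-b^{1/3})^3$ for $a=0$, so $v_2(x^3)=0$, whereas $x^3+x^2=x^2(x+1)$ exhibits a bisecant, so $v_2(x^3+x^2)\geq 1$) is sound. Comparing the two approaches: the paper's citation is shorter and embeds $S_f$ in the algebraic-curve framework that the rest of the subsection exploits anyway (tangents, inflection points); your argument is elementary and self-contained, and has the additional merit of directly proving inequivalence of the \emph{point sets} --- which is what the lemma literally asserts --- rather than inequivalence of the curves, a distinction the paper covers only implicitly through its subsequent inflection-point computation ($v_2=0$ versus $v_2=q$ in characteristic $3$).
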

\begin{proof}
This follows immediately from the classification of plane cubics with a cusp given in Lemma 11.24 of \cite{Hirschfeld} once one applies the appropriate choice of variables.
\end{proof}
A line of $\pg(2,q)$ intersects an absolutely irreducible cubic in at most three points.  It follows that in order to determine the intersection distribution for a cubic polynomial $f$, it suffices to determine $v_2(f)$: we have $v_i(f)=0$ for $i\geq 4$, and the remaining $v_i$ can be calculated from $v_2(f)$ using Theorem \ref{thm:visums}.

If a univariate cubic polynomial over $\F_q[x]$ has two roots in $\F_q$, it follows that the third root also lies in $\F_q$.  Thus, if two of the points of intersection between a line and the curve $\mathcal{C}_F$ are rational, then the third point of intersection is also rational. Hence, lines that meet $S_f$ in two points have a double intersection with $\mathcal{C}_F$ at some point $P$, and a single intersection with $\mathcal{C}_F$ at another point $Q$.  Such a line is known as a {\em tangent} to $\mathcal{C}_F$ at $P$, and every non-singular point $P\in \mathcal{C}_F$ has a unique tangent at $P$.  Note that in the case where $Q=P$, the tangent at $P$ has a triple point of intersection with $\mathcal{C}_F$ at $P$, in which case $P$ is referred to as a {\em point of inflection.}  As $\mathcal{C}_F$ has $q$ non-singular points, we deduce that the number of lines intersecting $S_f$ in two points (ignoring the lines through $(0,1,0)$, which are all tangents at $(0,1,0)$) is $q$ minus the number of (non-singular) points of inflection of $\mathcal{C}_F$.

Table 11.4 of \cite{Hirschfeld}\footnote{These values can be readily derived by computing the {\em Hessian} of $\mathcal{C}_F$ when working in odd characteristic, or in even characteristic by observing that in $\pg(2,2^m)$, every tangent to the curve $-yz^2+x^3$ at a non-singular point also passes through the point $(0,0,1)$, hence this is the only point of inflection.} tells us that the cubic of equation $-yz^2+x^3$ in $\pg(2,p^m)$ has one rational point of inflection when $p\neq 3$ and $q$ rational points of inflection when $p=3$.  The cubic $-yz+x^3+x^2z$ in $\pg(2,3^m)$ has no points of inflection.
\begin{align*}
\begin{array}{cccc}
\hline
\text{characteristic}&f&\text{number of inflections}& v_2(f)\\
\hline
p\neq 3 & x^3&1&q-1\\
3&x^3&q&0\\
3&x^3+x^2&0&q\\
\hline
\end{array}
\end{align*}
These agree with the values for $v_2(f)$  obtained in Theorem 1.6 of \cite{KyuLiPot}; the remaining values of the intersection distribution then follow from Theorem \ref{thm:v_i>2enough}.

\subsubsection{Using the theory of irreducible polynomials to explain intersection distribution of cubics}

In this section, we show how the intersection distribution for cubic polynomials can be directly obtained from known results for irreducible polynomials of fixed trace.  

We begin with a result from the finite field literature.  The formula $\frac{1}{n} \sum_{d \mid n} \mu(d) q^{n/d}$ for the number of monic irreducible polynomials of degree $n$ over $\F_q$ is well-known (see \cite{LidNie}). The formula for monic irreducibles with first coefficient fixed (i.e. fixed trace) is originally due to Carlitz (for $\gamma \neq 0$), though here we use the version by Yucas \cite{Yuc}.
\begin{theorem}\label{thm:irreds}
Let $q=p^s$.  Let $\mu$ denote the M\"{o}bius function, i.e. $\mu(n)$ equals $1$ if $n=1$, $(-1)^r$ if $n=p_1 \ldots p_r$ for distinct primes $p_i$, and $0$ if $p_i^2 \mid n$ for some prime $p_i$.  Then
\begin{itemize}
\item[(i)] Let $\gamma \in \F_q^*$. Let $n=p^k m$ where $p \nmid m$.  The number of monic irreducible monic polynomials $x^n+a_{n-1}x^{n-1}+ \cdots a_1x+a_0$ of degree $n$ over $\F_q$ with $a_{n-1}=\gamma$ is given by
$$N_{\gamma}(q,n)= \frac{1}{qn} \sum_{d \mid m} \mu(d) q^{n/d}.$$
\item[(ii)] Let $\gamma=0$. Let $n=p^k m$ where $p \nmid m$.  The number of monic irreducible monic polynomials $x^n+a_{n-1}x^{n-1}+ \cdots a_1x+a_0$ of degree $n$ over $\F_q$ with $a_{n-1}=0$ is given by
$$N_0(q,n)=\frac{1}{qn} \sum_{d \mid m} \mu(d) q^{n/d}- \frac{\epsilon}{n} \sum_{d \mid m} \mu(d) q^{n/{pd}},$$
where $\epsilon$ equals $1$ if $k>0$ and $0$ if $k=0$.
\end{itemize}
\end{theorem}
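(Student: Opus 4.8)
The plan is to translate the statement into a count of field elements with prescribed trace and then apply M\"obius inversion, paying close attention to how the trace degenerates in characteristic $p$. First I would record the standard dictionary between irreducibles and elements: if $\alpha\in\F_{q^n}$ has degree $n$ over $\F_q$ with minimal polynomial $x^n+a_{n-1}x^{n-1}+\cdots+a_0$, then its $n$ conjugates $\alpha,\alpha^q,\dots,\alpha^{q^{n-1}}$ all share this minimal polynomial and satisfy $\Tr_{\F_{q^n}/\F_q}(\alpha)=-a_{n-1}$. Consequently the number of monic irreducibles of degree $n$ with $a_{n-1}=\gamma$ equals $\tfrac1n$ times the number of degree-$n$ elements of $\F_{q^n}$ of trace $-\gamma$, so it suffices to count degree-$n$ elements of a given trace.

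To extract the degree-$n$ elements I would invert over subfields. Since $\Tr_{\F_{q^n}/\F_q}$ is $\F_q$-linear and onto, exactly $q^{n-1}$ elements of $\F_{q^n}$ have any fixed trace $t$. I would split this total according to the degree $e\mid n$ of the element, using the tower identity $\Tr_{\F_{q^n}/\F_q}(\alpha)=\tfrac{n}{e}\,\Tr_{\F_{q^e}/\F_q}(\alpha)$ for $\alpha\in\F_{q^e}$, which follows from $\Tr_{\F_{q^n}/\F_{q^e}}(\alpha)=\tfrac{n}{e}\alpha$. Writing $n=p^k m$ with $p\nmid m$, the reduction $\tfrac{n}{e}\bmod p$ controls everything: the factor is invertible in $\F_q$ exactly for the divisors $e=p^k d$ with $d\mid m$, whereas for all other divisors $p\mid \tfrac{n}{e}$ and \emph{every} element of $\F_{q^e}$ acquires trace $0$ in $\F_{q^n}$.

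The remaining input is a scaling symmetry: for $\beta\in\F_q^\ast$ the map $\alpha\mapsto\beta\alpha$ fixes the degree and multiplies the trace by $\beta$, so the number $c(e,s)$ of degree-$e$ elements of $\F_{q^e}$ with $\F_{q^e}$-trace $s$ depends only on whether $s=0$; let $c_1(e)$ denote its common value for $s\neq 0$. For $\gamma\neq 0$ (part (i)) the target trace $t=-\gamma$ is nonzero, so only the divisors $e=p^k d$ contribute and the count becomes
\[
q^{n-1}=\sum_{d\mid m} c_1(p^k d).
\]
This identity holds with $m$ replaced by any $m'\mid m$ (apply it to $p^k m'$), so M\"obius inversion over the divisors of $m$ gives $c_1(n)=\sum_{d\mid m}\mu(d)\,q^{\,n/d-1}$, and dividing by $n$ yields the formula of part (i).

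For $\gamma=0$ (part (ii)) the subfields with $p\mid\tfrac{n}{e}$ re-enter, each contributing all of its elements to trace $0$. Rather than track them individually, I would use the relation $c_0(n)=\Phi(n)-(q-1)c_1(n)$, where $c_0(n)$ is the number of degree-$n$ elements of trace $0$ and $\Phi(n)=\sum_{d\mid n}\mu(d)q^{n/d}$ is the total number of degree-$n$ elements. The only point requiring care is that $\mu$ annihilates all non-squarefree divisors of $n=p^k m$, so $\Phi(n)$ collapses to $\sum_{d\mid m}\mu(d)q^{n/d}-\sum_{d\mid m}\mu(d)q^{n/(pd)}$ precisely when $k\geq 1$; substituting the part (i) value of $c_1(n)$ then leaves exactly $\tfrac1q\sum_{d\mid m}\mu(d)q^{n/d}-\sum_{d\mid m}\mu(d)q^{n/(pd)}$, and division by $n$ gives part (ii). I expect the main obstacle to be precisely this characteristic-$p$ bookkeeping: one must confirm that the collapse of the trace on subfields produces the second sum with the exponents $n/(pd)$ and is switched on only for $k\geq 1$, matching the indicator $\epsilon$. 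When $k=0$ the argument simplifies further: replacing the roots $\alpha_i$ of $f$ by $\alpha_i-c$ shifts $a_{n-1}$ by $nc$, and since $p\nmid n$ here the element $n$ is invertible, so $nc$ runs over all of $\F_q$; the trace is thus equidistributed and part (i) holds with $\epsilon=0$ directly.
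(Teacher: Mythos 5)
Your proof is correct, but there is nothing in the paper to compare it against: the paper does not prove this theorem at all, importing it from the literature (Carlitz for $\gamma \neq 0$, in the form given by Yucas \cite{Yuc}). Your argument is therefore a self-contained derivation of a quoted result, and each of its steps checks out: the bijection between monic irreducibles of degree $n$ with $a_{n-1}=\gamma$ and $n$-element conjugacy classes of degree-$n$ elements of $\F_{q^n}$ with trace $-\gamma$; the tower identity $\Tr_{\F_{q^n}/\F_q}(\alpha)=\tfrac{n}{e}\Tr_{\F_{q^e}/\F_q}(\alpha)$ for $\alpha\in\F_{q^e}$, which correctly isolates the divisors $e=p^kd$ with $d\mid m$ as the only ones that can contribute to a nonzero trace; the scaling symmetry that makes $c_1(e)$ well defined; the family of identities $q^{p^km'-1}=\sum_{d\mid m'}c_1(p^kd)$ for all $m'\mid m$, whose M\"obius inversion yields $c_1(n)=\sum_{d\mid m}\mu(d)q^{n/d-1}$ and hence part (i); and the complementary count $c_0(n)=\Phi(n)-(q-1)c_1(n)$, in which the restriction of $\Phi(n)=\sum_{d\mid n}\mu(d)q^{n/d}$ to squarefree divisors produces the second sum with exponents $n/(pd)$ exactly when $k\geq 1$, matching the indicator $\epsilon$. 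I verified the final algebra in both cases and it reproduces the stated formulas. Two cosmetic points. First, in your last sentence you write that ``part (i) holds with $\epsilon=0$''; what you mean is that for $k=0$ the part (ii) formula coincides with the part (i) formula, because the substitution $x\mapsto x+c$ equidistributes $a_{n-1}$ when $p\nmid n$ --- the claim is right but the phrasing conflates the two parts. Second, you use without comment the fact that the number of elements of $\F_{q^n}$ of degree exactly $n$ is $\sum_{d\mid n}\mu(d)q^{n/d}$; this is just $n$ times the irreducible-counting formula the paper itself quotes as well known, so the appeal is legitimate, but it merits an explicit sentence since it is the source of $\Phi(n)$.
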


Theorem \ref{thm:irreds} allows us to immediately establish the non-hitting index (and hence the complete intersection distribution) for cubic polynomials. Recall that, from our prior simplifications, it suffices to consider a cubic of the form $x^3+ax^2$.

\begin{theorem}
Let $q=p^s$. Let $f=x^3+ax^2$ be a cubic polynomial over $\F_q$.
\begin{itemize}
\item[(i)] If $p \neq 3$, then $v_0(f)=\frac{q^2-1}{3}$.
\item[(ii)] If $p=3$ and $a \neq 0$, then $v_0(f)=\frac{q^2}{3}$.
\item[(iii)] If $p=3$ and $a=0$, then $v_0(f)=\frac{q(q-1)}{3}$.
\end{itemize}
\end{theorem}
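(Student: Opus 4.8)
The plan is to recognise $v_0(f)$ as a count of irreducible cubic polynomials and then read off the three cases directly from Theorem~\ref{thm:irreds}. Writing a generic non-vertical line as $y=\lambda x+\mu$ with $\lambda,\mu\in\F_q$, the non-hitting index $v_0(f)$ is by definition the number of pairs $(\lambda,\mu)$ for which $f(x)-\lambda x-\mu=x^3+ax^2-\lambda x-\mu$ has no root in $\F_q$. For a cubic over a field, having no root in $\F_q$ is equivalent to being irreducible over $\F_q$: any factorisation into lower-degree factors must include a linear factor, and hence produces a root. So $v_0(f)$ counts the irreducible members of the family $\{x^3+ax^2-\lambda x-\mu:\lambda,\mu\in\F_q\}$.

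Next I would observe that, as $(\lambda,\mu)$ ranges over $\F_q^2$, the polynomial $x^3+ax^2-\lambda x-\mu$ ranges bijectively over all monic cubics whose coefficient of $x^2$ is fixed equal to $a$ (the coefficients of $x^1$ and $x^0$ being $-\lambda$ and $-\mu$, which sweep out $\F_q$ independently). Therefore $v_0(f)=N_a(q,3)$, the number of monic irreducible cubics over $\F_q$ with $a_2=a$, which is exactly the quantity enumerated by Theorem~\ref{thm:irreds} with $n=3$ and $\gamma=a$.

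It then remains to evaluate the formulas. Writing $3=p^km$ with $p\nmid m$: when $p\neq 3$ we have $k=0$ and $m=3$, so $\epsilon=0$ and
\[
N_\gamma(q,3)=\frac{1}{3q}\bigl(\mu(1)q^3+\mu(3)q\bigr)=\frac{q^3-q}{3q}=\frac{q^2-1}{3}
\]
for every $\gamma$, which gives case (i). When $p=3$ we have $k=1$ and $m=1$, so the sums run over $d=1$ only; for $\gamma=a\neq 0$ this yields $N_\gamma(q,3)=q^3/(3q)=q^2/3$, establishing case (ii), while for $\gamma=a=0$ we have $\epsilon=1$ and $N_0(q,3)=q^2/3-q/3=q(q-1)/3$, establishing case (iii). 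The only points requiring care are the elementary claim that a rootless cubic is irreducible and the correct matching of the coefficient $a$ with Yucas's parameter $\gamma$ (the coefficient of $x^{n-1}$); neither presents a genuine obstacle, so the computation is essentially bookkeeping once the reduction to $N_a(q,3)$ is in place.
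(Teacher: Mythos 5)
Your proposal is correct and follows essentially the same route as the paper's own proof: identify $v_0(f)$ with the number of monic rootless (hence irreducible) cubics having $x^2$-coefficient $a$, equate this with $N_a(q,3)$, and evaluate Theorem~\ref{thm:irreds} in the three cases, with all formula evaluations matching. The only cosmetic difference is that you organise the case analysis via the parameters $(k,m,\epsilon)$ uniformly, whereas the paper invokes parts (i) and (ii) of Theorem~\ref{thm:irreds} case by case; the content is identical.
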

\begin{proof}
By definition, $v_0(f)$ is the number of pairs $(c,d) \in \F_q^2$ such that $x^3+ax^2-cx-d$ has no roots, i.e. the number of monic cubic polynomials over $\F_q$ with $x^2$-coefficient equal to $a$ with no roots in $\F_q$.  Since any factorization of a cubic must have at least one linear factor, this is equal to the number of monic irreducible cubic polynomials with $x^2$-coefficient equal to $a$, i.e. equal to $N_a(q,3)$.

Applying parts (i) and (ii) of  Theorem \ref{thm:irreds} with $n=3$ and $p \neq 3$, we see that for any $a \in \F_q$, we have
\[
    N_{a}(q,3) = \frac{1}{3q}(q^3-q)=\frac{q^2-1}{3},
\]
i.e. part (i) follows immediately. 
For part (ii), we apply part (i) of  Theorem \ref{thm:irreds} to see that for $n=p=3$, when $a \neq 0$, we have
\[
    N_{a}(q,3) = \frac{1}{3q}q^3=\frac{q^2}{3}.
\]
For part (iii), we apply part (ii) of  Theorem \ref{thm:irreds} to see that in this case
\[
    N_{0}(q,3) = \frac{1}{3q}q^3-\frac{1}{3q}q^2=\frac{q(q-1)}{3}.\qedhere
\]
\end{proof}

These agree with the values for $v_0(f)$  obtained in Theorem 1.6 of \cite{KyuLiPot}; the remaining values of the intersection distribution  then follow from Theorem \ref{thm:v_i>2enough}.

\section{Conclusion and open problems}
We have obtained a clearer picture of the behaviour of projective equivalence of polynomials and developed understanding of the degree of $(q+1)$-sets arising from polynomials.  We have characterised the $(q+1)$-sets of degree at least $q-2$ and determined the entries of the non-hitting spectrum up to $4q-16$.  However, there are still many interesting questions that remain regarding the intersection distribution.  
\begin{description}
\item[Open problem 1] From the polynomial viewpoint, it would be of interest to describe more precisely the intersection between monomials and $y=ax+b$ lines with $a,b \neq 0$ (particularly in relation to degree), and to move beyond the case of monomials. 

\item[Open problem 2] The non-hitting spectrum can have an arbitrarily large number of gaps on the lower end but for small values of $q$, it only has one at the top end. It looks like an interesting problem to determine whether or not there is more than one gap the top end of the spectrum.
\item[Open problem 3] From experiments with random sets of size $(q+1)$ using the FinInG package in GAP, the average value of the non-hitting index seems to lie around $0.735\, q(q-1)/2$, i.e. roughly three quarters of the maximal possible value. This also suggests that there should be less gaps in the non-hitting spectrum at the top end compared to the lower end. We ask whether there is a constant $C$ such that the expected value of the non-hitting index is $C\, q(q-1)/2$ as $q \to \infty$ and what its value is. This question can also be asked for the sets $S_f$, i.e. for functions from $\mathbb{F}_q$ to $\mathbb{F}_q$, and it looks like the constant is the same. Investigations like this have applications to random functions.
\item[Open problem 4] Resolve whether there exist projectively equivalent polynomials that are not CCZ-equivalent.
\item[Open problem 5] Determine all polynomials giving rise to the sets of Construction~\ref{con:twolinesonepoint}.
\item[Open problem 6] Establish tight lower bounds for the non-hitting index of $(q+1)$-sets of degree $n<\lceil q/2\rceil+1$.
\end{description}

\subsection*{Acknowledgements}

The second author was partially funded by the Deutsche Forschungsgemeinschaft (DFG, German Research Foundation) – Project-ID 491392403 – TRR 358. He thanks the DAAD (German Academic Exchange Service) for funding a research visit to Birkbeck, University of London, and the University of St Andrews, during which much of this project occurred.

\bibliography{intersectiondist}

\section*{Appendix: GAP code}

Here, we give GAP code to compute the intersection distribution using the FinInG package \cite{fining}.

\begin{lstlisting}
#Construct the finite projective plane PG(2,q) using FinInG for the sample value q=9
q:=9;
plane:=PG(2,q);
points:=Elements(Points(plane));
lines:=Elements(Lines(plane));

#Function for computing the intersection distribution of a set of projective points with respect to a set of projective lines
IntersectionDistribution:=function(set,lines)
	local q,intersections;
	q:=Size(BaseField(lines[1]));
    #Compute the intersection size for every line
	intersections:=List(lines,l->Number(set,x->x*l));
    #Count how often each intersection size occurs
	return List([0..q+1],i->Number(intersections,x->x=i));
end;

#Intersection distribution of a polynomial f over GF(q) using the affine definition
#Sample usage: ComputePoly(x->x^3,11)
ComputePoly:=function(f,q)
    local elements,lines,intersections,distribution;
    elements:=Elements(GF(q));
    lines:=Tuples(elements,2);
    #Compute the intersections with the affine lines
    intersections:=List(lines,l->Number(elements,x->f(x)=l[1]*x+l[2]));
    #Count how often each intersection size occurs
    distribution:=List([0..q],i->Number(intersections,x->x=i));
    Print("Degree: ",Maximum(PositionsProperty(distribution,x->x>0))-1,"\n");
    return distribution;
end;

#Create a random set of k distinct elements from the set points
RandomSet:=function(points,k)
	local S,p;
	S:=[];
	while Length(S) < k do
    #Draw an element p from the set points uniformly at random
		p:=Random(points);
    #AddSet ensures that p is only added if it is not a duplicate
		AddSet(S,p);
	od;
	return S;
end;

#Intersection distribution of a random set of q+1 projective points
IntersectionDistribution(RandomSet(points,q+1),lines);
\end{lstlisting}

\end{document}